\documentclass[hidelinks,onefignum,onetabnum]{siamart251216}

\usepackage{lipsum}
\usepackage{amsfonts}
\usepackage{graphicx}
\usepackage{epstopdf}
\usepackage{algorithmic}
\ifpdf
  \DeclareGraphicsExtensions{.eps,.pdf,.png,.jpg}
\else
  \DeclareGraphicsExtensions{.eps}
\fi

\usepackage{amsmath} 
\usepackage{amssymb} 
\usepackage{mathrsfs} 
\usepackage{mathtools} 
\usepackage{blkarray}
\usepackage{bm}
\usepackage{cite}

\usepackage{float} 
\usepackage{algorithm}
\usepackage{algorithmic}

\usepackage{enumerate} 
\usepackage{booktabs} 
\usepackage{multirow} 
\usepackage{diagbox} 
\usepackage{subcaption} 

\newsiamremark{remark}{Remark}
\newsiamremark{hypothesis}{Hypothesis}
\crefname{hypothesis}{Hypothesis}{Hypotheses}
\newsiamthm{claim}{Claim}
\newsiamremark{fact}{Fact}
\crefname{fact}{Fact}{Facts}

\headers{A Superfast Direct Solver for 2D Inverse NUDFT}{YINGZHOU LI AND JINGYU LIU}

\title{A Superfast Direct Solver for the 2D Type-II Inverse Nonuniform Discrete Fourier Transform Based on Hierarchically Semiseparable Matrices
}

\author{Yingzhou Li\thanks{School of Mathematical Sciences, Fudan University; Shanghai Key Laboratory for Contemporary Applied Mathematics, Fudan University 
  (\email{yingzhouli@fudan.edu.cn}).}
\and Jingyu Liu\thanks{School of Mathematical Sciences, Fudan University
  (\email{jyliu22@m.fudan.edu.cn}).}
}

\DeclareFontFamily{U}{mathx}{}
\DeclareFontShape{U}{mathx}{m}{n}{<-> mathx10}{}
\DeclareSymbolFont{mathx}{U}{mathx}{m}{n}
\DeclareMathAccent{\widecheck}{0}{mathx}{"71}
\makeatother

\newcommand*{\trans}{\top}
\newcommand*{\herm}{*}
\newcommand*{\pinv}{\dagger}

\newcommand*{\complex}{\mathbb{C}}
\newcommand*{\unitcircle}{\mathbb{S}}

\newcommand*{\Mat}[1]{\bm{#1}}
\newcommand*{\tMat}[1]{\widetilde{\Mat{#1}}}
\newcommand*{\hMat}[1]{\widehat{\Mat{#1}}}
\newcommand*{\cMat}[1]{\widecheck{\Mat{#1}}}
\newcommand*{\rank}{\operatorname{rank}}

\newcommand*{\bigO}{\mathscr{O}}
\newcommand*{\e}{\mathrm{e}}
\newcommand*{\imag}{\mathrm{i}}
\newcommand*{\compl}{\mathrm{c}}
\newcommand*{\fro}{\mathrm{F}}

\newcommand*{\indexset}[1]{\mathcal{#1}}

\newcommand*{\mylog}[2][]{\log^{#1} #2}

\newcommand*{\idmat}{\Mat{I}}
\newcommand*{\ranktol}{\varepsilon}

\newcommand*{\h}{\mathrm{h}}

\newcommand*{\construct}{\mathrm{c}}
\newcommand*{\factor}{\mathrm{f}}
\newcommand*{\solve}{\mathrm{s}}
\newcommand*{\pre}{\mathrm{pre}}
\newcommand*{\iter}{\mathrm{iter}}

\newcommand*{\exact}{\mathrm{ex}}

\newcommand*{\faceprod}{\bullet}

\newcommand*{\tree}[1]{\mathsf{#1}}
\newcommand*{\sib}{\operatorname{sib}}
\newcommand*{\ch}{\operatorname{ch}}
\newcommand*{\level}{\operatorname{level}}

\newcommand*{\dirx}{x}
\newcommand*{\diry}{y}

\newcommand*{\numcoldir}{n}
\newcommand*{\numcolx}{\numcoldir^{[\dirx]}}
\newcommand*{\numcoly}{\numcoldir^{[\diry]}}
\newcommand*{\numcol}{N}

\newcommand*{\numrowdir}{m}
\newcommand*{\numrowx}{\numrowdir^{[\dirx]}}
\newcommand*{\numrowy}{\numrowdir^{[\diry]}}
\newcommand*{\numrow}{M}

\newcommand*{\colind}{k}
\newcommand*{\colindpair}{\ell}
\newcommand*{\colinddir}{k}
\newcommand*{\colindx}{\colinddir^{[\dirx]}}
\newcommand*{\colindy}{\colinddir^{[\diry]}}

\newcommand*{\rowind}{j}
\newcommand*{\rowinddir}{j}
\newcommand*{\rowindx}{\rowinddir^{[\dirx]}}
\newcommand*{\rowindy}{\rowinddir^{[\diry]}}

\newcommand*{\coeff}{c}
\newcommand*{\targetv}{f}

\newcommand*{\pt}{x}
\newcommand*{\ptx}{x}
\newcommand*{\pty}{y}

\newcommand*{\nudftmat}{\Mat{A}}
\newcommand*{\nudftmatx}{\nudftmat^{[\dirx]}}
\newcommand*{\nudftmaty}{\nudftmat^{[\diry]}}

\newcommand*{\tnudftmat}{\Mat{G}}
\newcommand*{\tnudftmatx}{\tnudftmat^{[\dirx]}}
\newcommand*{\tnudftmaty}{\tnudftmat^{[\diry]}}

\newcommand*{\dftmat}{\Mat{F}}
\newcommand*{\dftmatx}{\dftmat^{[\dirx]}}
\newcommand*{\dftmaty}{\dftmat^{[\diry]}}
\newcommand*{\dftmatinv}{\dftmat^{-1}}
\newcommand*{\dftmatxinv}{\dftmat^{[\dirx], -1}}
\newcommand*{\dftmatyinv}{\dftmat^{[\diry], -1}}

\newcommand*{\nudfthss}{\tMat{G}}
\newcommand*{\nudfthssx}{\nudfthss^{[\dirx]}}
\newcommand*{\nudfthssy}{\nudfthss^{[\diry]}}

\newcommand*{\coeffvec}{\Mat{c}}
\newcommand*{\targetvvec}{\Mat{f}}

\newcommand*{\colindset}{\indexset{K}}
\newcommand*{\colindsetx}{\colindset^{[\dirx]}}
\newcommand*{\colindsety}{\colindset^{[\diry]}}
\newcommand*{\rowindset}{\indexset{J}}

\newcommand*{\rowptS}{\gamma}
\newcommand*{\colptS}{\xi}
\newcommand*{\rowptSx}{\rowptS^{[\dirx]}}
\newcommand*{\rowptSy}{\rowptS^{[\diry]}}
\newcommand*{\colptSx}{\colptS^{[\dirx]}}
\newcommand*{\colptSy}{\colptS^{[\diry]}}

\newcommand*{\nudftkernel}{\Psi}
\newcommand*{\nudftkernelx}{\nudftkernel^{[\dirx]}}
\newcommand*{\nudftkernely}{\nudftkernel^{[\diry]}}

\newcommand*{\rowptSmat}{\Mat{\Gamma}}
\newcommand*{\circulantC}{\Mat{C}}
\newcommand*{\circulantD}{\Mat{D}}
\newcommand*{\unitroot}{\zeta}

\newcommand*{\nudftumat}{\Mat{u}}
\newcommand*{\nudftusca}{u}
\newcommand*{\nudftvmat}{\Mat{v}}
\newcommand*{\nudftvsca}{v}

\newcommand*{\colsubindset}{\indexset{C}}
\newcommand*{\colsubindsetx}{\colsubindset^{[\dirx]}}
\newcommand*{\colsubindsetxcompl}{\colsubindset^{[\dirx], \compl}}
\newcommand*{\colsubindsety}{\colsubindset^{[\diry]}}
\newcommand*{\colsubindsetycompl}{\colsubindset^{[\diry], \compl}}
\newcommand*{\rowsubindset}{\indexset{R}}

\newcommand*{\myblue}{\mathrm{b}}
\newcommand*{\mygreen}{\mathrm{g}}
\newcommand*{\myyellow}{\mathrm{y}}
\newcommand*{\fast}{\mathrm{fast}}
\newcommand*{\col}{\mathrm{c}}

\newcommand*{\nudftrownorm}{Q}
\newcommand*{\nudftrownormx}{\nudftrownorm^{[\dirx]}}
\newcommand*{\nudftrownormy}{\nudftrownorm^{[\diry]}}
\newcommand*{\nudftconst}{C}

\newcommand*{\nudftintervalx}{[a, b)}
\newcommand*{\nudftintervaly}{[c, d)}

\newcommand*{\nudftrowpt}{\Mat{\gamma}}
\newcommand*{\nudftcolpt}{\Mat{\xi}}

\newcommand*{\factorcomplexity}{\bigO\bigl(\numrow + \numcol^{3 / 2} \mylog[3]{\numcol}\bigr)}
\newcommand*{\solvecomplexity}{\bigO\bigl(\numrow + \numcol \mylog[3]{\numcol}\bigr)}
\newcommand*{\offlinecomplexity}{\bigO\bigl(\numrow \mylog[2]{\numcol} + \numcol^{3 / 2} \mylog[3]{\numcol}\bigr)}
\newcommand*{\onlinecomplexity}{\bigO\bigl(\numrow + \numcol \mylog[3]{\numcol}\bigr)}

\newcommand*{\HSStree}{\tree{T}}
\newcommand*{\HSStreex}{\HSStree^{[\dirx]}}
\newcommand*{\HSStreey}{\HSStree^{[\diry]}}

\newcommand*{\HSSrowindset}{\indexset{J}}
\newcommand*{\HSScolindset}{\indexset{K}}
\newcommand*{\HSScolindsetx}{\indexset{K}^{[\dirx]}}
\newcommand*{\HSScolindsety}{\indexset{K}^{[\diry]}}

\newcommand*{\HSSH}{\Mat{H}}
\newcommand*{\HSSHx}{\HSSH^{[\dirx]}}
\newcommand*{\HSSHy}{\HSSH^{[\diry]}}

\newcommand*{\HSSroot}{\rho}
\newcommand*{\HSSrootx}{\HSSroot^{[\dirx]}}
\newcommand*{\HSSrooty}{\HSSroot^{[\diry]}}

\newcommand*{\HSSmaxlevel}{L}
\newcommand*{\HSSmylevel}{\ell}

\newcommand*{\HSSnode}{\tau}
\newcommand*{\HSSnodex}{\HSSnode^{[\dirx]}}
\newcommand*{\HSSnodey}{\HSSnode^{[\diry]}}

\newcommand*{\HSSrownode}{\tau}

\newcommand*{\HSScolnode}{\sigma}

\newcommand*{\HSSch}{\alpha}
\newcommand*{\HSSchch}{\chi}
\newcommand*{\HSSchx}{\HSSch^{[\dirx]}}
\newcommand*{\HSSchy}{\HSSch^{[\diry]}}

\newcommand*{\HSSchsib}{\beta}

\newcommand*{\HSSchind}{p}
\newcommand*{\HSSchrowind}{p}
\newcommand*{\HSSchcolind}{q}
\newcommand*{\HSSchnum}{b}

\newcommand*{\HSSD}{\Mat{D}}
\newcommand*{\HSSU}{\Mat{U}}
\newcommand*{\HSSV}{\Mat{V}}
\newcommand*{\HSSR}{\Mat{R}}
\newcommand*{\HSSW}{\Mat{W}}
\newcommand*{\HSSB}{\Mat{B}}

\newcommand*{\HSSrank}{r}
\newcommand*{\HSSrankx}{\HSSrank^{[\dirx]}}
\newcommand*{\HSSranky}{\HSSrank^{[\diry]}}

\newcommand*{\HSSdiagsize}{d}
\newcommand*{\HSSdiagsizex}{\HSSdiagsize^{[\dirx]}}
\newcommand*{\HSSdiagsizey}{\HSSdiagsize^{[\diry]}}

\newcommand*{\HSSc}{\Mat{c}}
\newcommand*{\HSSf}{\Mat{f}}

\newcommand*{\HSStU}{\tMat{U}}
\newcommand*{\HSStR}{\tMat{R}}
\newcommand*{\HSShR}{\hMat{R}}
\newcommand*{\HSSF}{\Mat{F}}

\newcommand*{\HSStV}{\tMat{V}}
\newcommand*{\HSStW}{\tMat{W}}
\newcommand*{\HSShW}{\hMat{W}}
\newcommand*{\HSSG}{\Mat{G}}

\newcommand*{\HSStB}{\tMat{B}}

\newcommand*{\HSSC}{\Mat{C}}
\newcommand*{\HSSS}{\Mat{S}}

\newcommand*{\HSSX}{\Mat{X}}
\newcommand*{\HSSY}{\Mat{Y}}

\newcommand*{\HSSOm}{\Mat{\Omega}}
\newcommand*{\HSScD}{\cMat{D}}
\newcommand*{\HSScU}{\cMat{U}}

\newcommand*{\HSSnoderowsize}{m}
\newcommand*{\HSSnodecolsize}{n}
\newcommand*{\HSSnodereducedrowsize}{\check{m}}

\newcommand*{\HSSQ}{\Mat{Q}}
\newcommand*{\HSShV}{\hMat{V}}
\newcommand*{\HSStD}{\tMat{D}}

\newcommand*{\HSShD}{\hMat{D}}
\newcommand*{\HSSP}{\Mat{P}}
\newcommand*{\HSShU}{\hMat{U}}

\newcommand*{\HSScf}{\cMat{f}}
\newcommand*{\HSShf}{\hMat{f}}

\newcommand*{\HSShc}{\hMat{c}}

\newcommand*{\HSShb}{\hMat{b}}

\newcommand*{\indset}{\indexset{J}}

\newcommand*{\GENmat}{\Mat{A}}
\newcommand*{\GENmatapprox}{\tMat{A}}
\newcommand*{\GENrowsize}{m}
\newcommand*{\GENcolsize}{n}

\newcommand*{\GENLRleft}{\Mat{U}}
\newcommand*{\GENLRleftA}{\GENLRleft_{1}}
\newcommand*{\GENLRleftB}{\GENLRleft_{2}}

\newcommand*{\GENLRright}{\Mat{V}}
\newcommand*{\GENLRrightA}{\GENLRright_{1}}
\newcommand*{\GENLRrightB}{\GENLRright_{2}}

\newcommand*{\GENrank}{r}
\newcommand*{\GENrankA}{\GENrank_{1}}
\newcommand*{\GENrankB}{\GENrank_{2}}

\newcommand*{\GENmatA}{\Mat{A}_{1}}
\newcommand*{\GENmatB}{\Mat{A}_{2}}
\newcommand*{\GENcolsizeA}{\GENcolsize_{1}}
\newcommand*{\GENcolsizeB}{\GENcolsize_{2}}

\newcommand*{\GENindAB}{t}

\newcommand*{\GENzmat}{\Mat{z}}
\newcommand*{\GENzsca}{z}
\newcommand*{\GENwmat}{\Mat{w}}
\newcommand*{\GENwsca}{w}

\newcommand*{\tsca}{t}

\newcommand*{\myJ}{\indexset{J}}

\newcommand*{\myKx}{\indexset{K}^{[x]}}
\newcommand*{\myKy}{\indexset{K}^{[y]}}
\newcommand*{\myK}{\indexset{K}}

\newcommand*{\myTx}{\tree{T}^{[x]}}
\newcommand*{\myTy}{\tree{T}^{[y]}}
\newcommand*{\myT}{\tree{T}}

\newcommand*{\myHx}{\Mat{H}^{[x]}}
\newcommand*{\myHy}{\Mat{H}^{[y]}}
\newcommand*{\myH}{\Mat{H}}

\newcommand*{\myDx}{\Mat{D}^{[x]}}
\newcommand*{\myDy}{\Mat{D}^{[y]}}
\newcommand*{\myD}{\Mat{D}}

\newcommand*{\myUx}{\Mat{U}^{[x]}}
\newcommand*{\myUy}{\Mat{U}^{[y]}}
\newcommand*{\myU}{\Mat{U}}

\newcommand*{\myVx}{\Mat{V}^{[x]}}
\newcommand*{\myVy}{\Mat{V}^{[y]}}
\newcommand*{\myV}{\Mat{V}}

\newcommand*{\myVxh}{\Mat{V}^{[x], \herm}}
\newcommand*{\myVyh}{\Mat{V}^{[y], \herm}}
\newcommand*{\myVh}{\Mat{V}^{\herm}}

\newcommand*{\myRx}{\Mat{R}^{[x]}}
\newcommand*{\myRy}{\Mat{R}^{[y]}}
\newcommand*{\myR}{\Mat{R}}

\newcommand*{\myWx}{\Mat{W}^{[x]}}
\newcommand*{\myWy}{\Mat{W}^{[y]}}
\newcommand*{\myW}{\Mat{W}}

\newcommand*{\myBx}{\Mat{B}^{[x]}}
\newcommand*{\myBy}{\Mat{B}^{[y]}}
\newcommand*{\myB}{\Mat{B}}

\newcommand*{\mytx}{\tau}
\newcommand*{\myty}{\sigma}
\newcommand*{\myt}{\lambda}

\newcommand*{\mychx}{\alpha}
\newcommand*{\mychy}{\beta}
\newcommand*{\mych}{\xi}

\newcommand*{\myrowchindx}{p}
\newcommand*{\myrowchindy}{q}
\newcommand*{\mycolchindx}{r}
\newcommand*{\mycolchindy}{s}

\usepackage{amsopn}
\DeclareMathOperator{\diag}{diag}

\newcommand{\STAGE}[1]{%
  \item[] \textit{#1}%
}

\begin{document}

\maketitle

\begin{abstract}
Reconstructing images on Cartesian grids from nonuniform Fourier samples is a basic computational task in Fourier reconstruction methods for computed tomography.
This paper proposes a direct solver for the 2D type-II nonuniform discrete Fourier transform~(NUDFT) based on the face-splitting product of hierarchically semiseparable~(HSS) matrices.
We define the face-splitting product of two HSS trees and prove that the face-splitting product of two HSS matrices admits an HSS representation on the resulting tree.
The proof is constructive, providing explicit formulas for the HSS generators and bounds on the off-diagonal ranks at each level.
For the NUDFT, a Fourier change of basis expresses the transformed 2D matrix as a face-splitting product of two transformed 1D NUDFT matrices.
This identity yields an algebraic construction of a 2D HSS approximation from two 1D HSS approximations, for which we establish a Frobenius-norm approximation error bound.
Combining this construction with recompression, URV factorization, and a two-dimensional inverse fast Fourier transform gives a direct solver for the associated least-squares problem.
For fixed accuracy, square frequency grids, and balanced HSS trees with fixed leaf column sizes, the solver has an offline complexity of \(\offlinecomplexity\) and an online complexity of \(\onlinecomplexity\) per right-hand side, where \(\numrow\) and \(\numcol\) are the numbers of sample points and unknown coefficients, respectively.
Numerical experiments on random and polar grids illustrate the computational scaling and reconstruction accuracy of the method, and demonstrate a substantial reduction in lsqr iteration counts when the direct solver is used as a preconditioner.
\end{abstract}

\begin{keywords}
nonuniform discrete Fourier transform, hierarchically semiseparable matrices, fast direct solvers, least-squares problems
\end{keywords}

\begin{MSCcodes}
65T50, 65F55, 65F20
\end{MSCcodes}

\section{Introduction} \label{sec:introduction}

Fourier transforms are basic computational tools in signal processing and image reconstruction~\cite{Bagchi_Mitra_2012, Hansen_Jorgensen_Lionheart_2021}.
On a uniform Cartesian grid, the fast Fourier transform~(FFT) makes both the evaluation and inversion of discrete Fourier transforms efficient.
In imaging applications, the measurement geometry can instead produce Fourier samples at nonuniform locations, while the image is represented on a Cartesian grid.
Reconstruction then requires the solution of an inverse nonuniform Fourier problem.

This paper considers the two-dimensional type-II \emph{nonuniform discrete Fourier transform}~(NUDFT) of the following form:
\begin{equation} \label{eq:typeII_2d_nudft}
    \targetv_{\rowind} = \sum_{\colindx = 0}^{\numcolx - 1} \sum_{\colindy = 0}^{\numcoly - 1} \coeff_{\colindx, \colindy} \e^{-2 \pi \imag (\colindx \ptx_{\rowind} + \colindy \pty_{\rowind})}, \quad 0 \leq \rowind \leq \numrow - 1,
\end{equation}
where the \emph{sample points} \(\{(\ptx_{\rowind}, \pty_{\rowind})\}\) are distributed arbitrarily in \([0, 1)^2\) and the \emph{frequencies} \(\{(\colindx, \colindy)\}\) are distributed on a Cartesian grid of contiguous integers.
Throughout the paper, the number of sample points \(\numrow\) is assumed to be larger than or equal to the number of frequencies \(\numcol = \numcolx \numcoly\).
When the \emph{coefficients} \(\{\coeff_{\colindx, \colindy}\}\) are given and one aims to compute the \emph{target values} \(\{\targetv_{\rowind}\}\), the problem is called the \emph{forward} NUDFT. 
When the target values \(\{\targetv_{\rowind}\}\) are given and one aims to determine the coefficients \(\{\coeff_{\colindx, \colindy}\}\), the problem is called the \emph{inverse} NUDFT.

Both the forward and inverse NUDFT have wide applications in scientific computing, such as signal processing~\cite{Bagchi_Mitra_2012}, image reconstruction~\cite{Bourgeois_Wajer_vanOrmondt_GraveronDemilly_2001}, and fast convolution~\cite{Jiang_Greengard_2025}.
A motivating application is Fourier reconstruction in computed tomography~(CT).
For parallel-beam projections, the Fourier slice theorem~\cite{Hansen_Jorgensen_Lionheart_2021} states that
\begin{equation*}
    \widehat{p}_{\theta}(\omega)
    =
    \widehat{u}(\omega \cos \theta, \omega \sin \theta),
\end{equation*}
where \(u\) is the image and \(p_{\theta}\) is its projection at angle \(\theta\).
Thus, taking one-dimensional Fourier transforms of projections at different angles provides samples of the image Fourier transform along radial lines.
The resulting Fourier sampling locations are on a nonuniform polar grid, whereas the image is typically represented on a Cartesian grid.
Approximating the Fourier integrals by sums over a Cartesian image grid yields a discrete model of the form~\eqref{eq:typeII_2d_nudft}, after normalizing the sampling coordinates and incorporating known scaling and phase factors.
In this interpretation, \(\{\coeff_{\colindx, \colindy}\}\) represent image coefficients on the Cartesian grid, and \(\{(\ptx_{\rowind}, \pty_{\rowind})\}\) encode the nonuniform Fourier sampling locations.
Recovering the image coefficients therefore leads to an inverse type-II 2D NUDFT problem.

Let the NUDFT matrix \(\nudftmat\) be defined as\footnote{In this paper, the starting index of a matrix can be \(0\) or \(1\), depending on the context.}
\begin{equation}
\label{eq:typeII_2d_nudft_matrix}
\begin{gathered}
  \nudftmat\bigl(\rowind,(\colindx,\colindy)\bigr)
  =
  \e^{-2\pi\imag
  \left(\colindx\ptx_{\rowind}+\colindy\pty_{\rowind}\right)},
  \\[2pt]
  0\leq \rowind\leq \numrow-1,
  \qquad
  0\leq \colindx\leq \numcolx-1,
  \qquad
  0\leq \colindy\leq \numcoly-1.
\end{gathered}
\end{equation}
where the column index \((\colindx, \colindy)\) is ordered in a lexicographical manner.
That is, we have an underlying \(1\)-to-\(1\) map \((\colindx, \colindy) \leftrightarrow \colindy + \colindx \numcoly\).
The forward NUDFT can be expressed as a matrix-vector multiplication \(\targetvvec = \nudftmat \coeffvec\) and the inverse NUDFT can be formulated as a linear least-squares problem \(\min_{\coeffvec} \|\nudftmat \coeffvec - \targetvvec\|_{2}\), where \(\coeffvec \in \complex^{\numcol}\) is the vectorized form of the coefficients \(\{\coeff_{\colindx, \colindy}\}\) and \(\targetvvec \in \complex^{\numrow}\) is the vector corresponding to the target values \(\{\targetv_{\rowind}\}\).
Direct evaluation costs \(\bigO(\numrow \numcol)\), while solving the least-squares problem by a dense QR factorization costs \(\bigO(\numrow \numcol^{2})\).

\subsection{Related Work} \label{subsec:related_work}

On the uniform Cartesian grid \((\ptx_{\rowindx}, \pty_{\rowindy}) = (\rowindx / \numcolx, \rowindy / \numcoly)\), the NUDFT reduces to the two-dimensional \emph{discrete Fourier transform}~(DFT), whose matrix is a Kronecker product of two one-dimensional DFT matrices.
The forward and inverse transforms can then be computed by the FFT~\cite{Cooley_Tukey_1965} and inverse FFT~(iFFT), respectively, in \(\bigO\bigl(\numcol \mylog{\numcol}\bigr)\) time.
For an oversampled uniform Cartesian grid with \(\numrowx \geq \numcolx\) and \(\numrowy \geq \numcoly\), the NUDFT matrix consists of columns of a larger DFT matrix, and both problems can still be solved using FFTs in \(\bigO\bigl(\numrow \mylog{\numrow}\bigr)\) time.

For nonuniform sampling, the FFT cannot be applied directly.
The forward transform can nevertheless be evaluated efficiently using \emph{nonuniform fast Fourier transform}~(NUFFT) algorithms~\cite{Anderson_Dahleh_1996, Barnett_Magland_Klinteberg_2019, Dutt_Rokhlin_1993, Dutt_Rokhlin_1995, Greengard_Lee_2004, Lee_Greengard_2005, Potts_Steidl_Tasche_2001, Ruiz-Antolín_Townsend_2018}.
For fixed accuracy, these algorithms typically require \(\bigO\bigl(\numrow + \numcol \mylog{\numcol}\bigr)\) operations.

For nonuniform sampling, the pseudoinverse of the NUDFT matrix generally differs from a scaled adjoint.
Iterative least-squares solvers such as lsqr~\cite{Paige_Saunders_1982} use NUFFT evaluations to apply the matrix and its adjoint efficiently.
Their convergence depends on the conditioning of the NUDFT matrix, which can deteriorate for highly nonuniform sampling.
Preconditioning can therefore be important for obtaining accurate solutions within a reasonable number of iterations.

Direct inversion methods have also been developed.
Kircheis and Potts proposed methods for the one-dimensional~\cite{Kircheis_Potts_2019} and multivariate~\cite{Kircheis_Potts_2023} NUFFT.
Their multivariate methods include approaches based on density compensation and optimization of sparse interpolation matrices.
Recently, in~\cite{Wilber_Epperly_Barnett_2025}, the authors proposed a direct method to solve the type-II NUDFT in 1D based on the \emph{hierarchically semiseparable}~(HSS) matrix, which has a complexity of \(\bigO\bigl((\numrow + \numcol) \mylog[2]{\numcol}\bigr)\).
Further, the authors in~\cite{Li_Liu_2025} extended this method to the case of type-III NUDFT in 1D, by factoring the type-III NUDFT matrix into a product of a type-II NUDFT matrix and an HSS matrix.
For the 2D problem, if the sample points form a tensor grid, the NUDFT matrix has a Kronecker product structure, and one can apply a 1D solver in each coordinate direction.
Nonuniform sampling does not generally have this tensor-product structure in Cartesian coordinates, so the inverse problem cannot be reduced to two independent 1D inversions.
This prevents a direct application of the 1D solver in~\cite{Wilber_Epperly_Barnett_2025} to the general 2D problem.

\subsection{Contributions} \label{subsec:contributions}

Our main contribution is an algebraic HSS construction for the transformed 2D NUDFT matrix \(\tnudftmat = \nudftmat \dftmatinv\), where \(\dftmat\) is the 2D DFT matrix.
We derive the separable kernel representation
\(\tnudftmat = \tnudftmatx \faceprod \tnudftmaty\),
where \(\tnudftmatx\) and \(\tnudftmaty\) are the transformed 1D NUDFT matrices in the \(\dirx\)- and \(\diry\)-directions, respectively.
This representation motivates the off-diagonal low-rank structure and provides the basis for our construction.

We define the face-splitting product of two HSS trees and prove that the face-splitting product of two HSS matrices admits an HSS representation on the resulting tree.
The constructive proof gives explicit formulas for the generators and bounds on the off-diagonal ranks at each level.
Applied to the two transformed 1D NUDFT matrices, this yields a 2D HSS approximation for which we establish a Frobenius-norm error bound.

After the algebraic face-splitting construction, the HSS approximation of \(\tnudftmat\) is recompressed by the algorithms in~\cite{Xia_2012}.
Then, the URV factorization~\cite{Xi_Xia_Cauley_Balakrishnan_2014} is used as a direct solver for the least-squares problem associated with \(\nudfthss\).
The construction and factorization of the HSS matrix constitute the \emph{offline} stage, whose complexity is \(\offlinecomplexity\).
Once the solver is built, the inverse NUDFT problem can be solved by applying the HSS solver followed by the iFFT.
This is referred to as the \emph{online} stage, and its complexity is \(\onlinecomplexity\).

The proposed direct solver can also be used as a preconditioner for lsqr.
Numerical experiments reconstruct a modified Shepp-Logan phantom from nonuniform Fourier samples on random and polar grids, examining the computational scaling and reconstruction accuracy.
The preconditioned iterations obtain accurate solutions using substantially fewer iterations than unpreconditioned lsqr.

\subsection{Organization} \label{subsec:organization}

Section~\ref{sec:preliminaries} introduces the notation and reviews HSS matrix algorithms and the one-dimensional direct inverse method.
Section~\ref{sec:low_rank_property_nudft_matrix} develops the face-splitting construction and its approximation error analysis.
Section~\ref{sec:typeII_2d_nudft_solver} presents the resulting two-dimensional direct solver and analyzes its complexity.
Numerical experiments are reported in Section~\ref{sec:numerical_results}, followed by conclusions and future directions in Section~\ref{sec:conclusion_future_directions}.

\section{Preliminaries} \label{sec:preliminaries}

\subsection{Notations and Preliminaries} \label{subsec:notations}

We use \(\unitcircle \coloneq \{\GENzsca \in \complex : |\GENzsca| = 1\}\) to denote the unit circle.
For a set \(\indset\), we use \(|\indset|\) to denote the cardinality of \(\indset\).
We use MATLAB notation to denote the submatrix of \(\GENmat\) with row and column indices in \(\rowsubindset\) and \(\colsubindset\), i.e., \(\GENmat(\rowsubindset, \colsubindset)\).
A colon in the index means all the indices in that dimension, e.g., \(\GENmat(:, \colsubindset)\) means the submatrix of \(\GENmat\) with all rows and column indices in \(\colsubindset\).

Let \(\|\cdot\|\) be a unitarily invariant matrix norm.
For a given tolerance \(0 < \ranktol < 1\), the numerical rank of a matrix \(\GENmat \in \complex^{\GENrowsize \times \GENcolsize}\), denoted by \(\rank_{\ranktol}(\GENmat)\), is defined as the smallest integer \(\GENrank\) such that there exists a matrix \(\GENmatapprox \in \complex^{\GENrowsize \times \GENcolsize}\) with \(\|\GENmat - \GENmatapprox\| < \ranktol \|\GENmat\|\) and \(\rank(\GENmatapprox) = \GENrank\).

For two matrices \(\GENmatA \in \complex^{\GENrowsize \times \GENcolsizeA}\) and \(\GENmatB \in \complex^{\GENrowsize \times \GENcolsizeB}\) of the same size of rows, the \emph{face-splitting product} of \(\GENmatA\) and \(\GENmatB\) is defined as the Kronecker product of the corresponding rows of \(\GENmatA\) and \(\GENmatB\), i.e.,
\begin{equation*}
  \GENmatA \faceprod \GENmatB \coloneq
  \begin{bmatrix}
    \GENmatA(1, :) \otimes \GENmatB(1, :)\\
    \GENmatA(2, :) \otimes \GENmatB(2, :)\\
    \vdots\\
    \GENmatA(\GENrowsize, :) \otimes \GENmatB(\GENrowsize, :)
  \end{bmatrix} \in \complex^{\GENrowsize \times (\GENcolsizeA \GENcolsizeB)}.
\end{equation*}
The Frobenius norm of the face-splitting product can be estimated by the following lemma, which can be proved through direct calculation.
\begin{lemma} \label{lem:faceprod_F_norm}
  Let \(\GENmatA \in \complex^{\GENrowsize \times \GENcolsizeA}\) and \(\GENmatB \in \complex^{\GENrowsize \times \GENcolsizeB}\), then
  \begin{equation*}
    \|\GENmatA \faceprod \GENmatB\|_{\fro}
    \leq \min \Bigl\{
      \max_{1 \leq \rowind \leq \GENrowsize} \bigl\{\|\GENmatA(\rowind, :) \|_{2}\bigr\} \|\GENmatB\|_{\fro}, 
      \max_{1 \leq \rowind \leq \GENrowsize} \bigl\{\|\GENmatB(\rowind, :) \|_{2}\bigr\} \|\GENmatA\|_{\fro}
      \Bigr\}.
  \end{equation*}
\end{lemma}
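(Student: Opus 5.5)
The plan is a direct row-by-row computation that rests on the multiplicativity of the Euclidean norm under Kronecker products. The squared Frobenius norm of any matrix is the sum of the squared Euclidean norms of its rows. By definition, the \(\rowind\)-th row of \(\GENmatA \faceprod \GENmatB\) is \(\GENmatA(\rowind, :) \otimes \GENmatB(\rowind, :)\). Hence the first step is to write
\begin{equation*}
  \|\GENmatA \faceprod \GENmatB\|_{\fro}^{2}
  = \sum_{\rowind = 1}^{\GENrowsize} \bigl\|\GENmatA(\rowind, :) \otimes \GENmatB(\rowind, :)\bigr\|_{2}^{2}.
\end{equation*}

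The second step is the identity \(\|\Mat{a} \otimes \Mat{b}\|_{2} = \|\Mat{a}\|_{2} \|\Mat{b}\|_{2}\) for vectors \(\Mat{a}\) and \(\Mat{b}\). It follows by expanding \(\sum_{p, q} |a_{p} b_{q}|^{2} = \bigl(\sum_{p} |a_{p}|^{2}\bigr)\bigl(\sum_{q} |b_{q}|^{2}\bigr)\). Substituting this into the first display gives the exact formula
\begin{equation*}
  \|\GENmatA \faceprod \GENmatB\|_{\fro}^{2}
  = \sum_{\rowind = 1}^{\GENrowsize} \|\GENmatA(\rowind, :)\|_{2}^{2} \, \|\GENmatB(\rowind, :)\|_{2}^{2}.
\end{equation*}

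The third step bounds this sum in two ways. First, bound each factor \(\|\GENmatA(\rowind, :)\|_{2}^{2}\) by its maximum over \(\rowind\) and pull it out of the sum. The remaining sum is \(\sum_{\rowind} \|\GENmatB(\rowind, :)\|_{2}^{2} = \|\GENmatB\|_{\fro}^{2}\), which gives the first bound. Second, bound the \(\GENmatB\) factors by their maximum instead; the remaining sum is \(\|\GENmatA\|_{\fro}^{2}\), which gives the second bound by symmetry. Since both bounds hold, their minimum is also an upper bound. Taking square roots, which are monotone, yields the claimed inequality.

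No step is a genuine obstacle. The only point needing care is the vector Kronecker norm identity, and it is a one-line expansion.
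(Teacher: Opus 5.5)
Your proof is correct and is exactly the ``direct calculation'' the paper invokes without writing it out: the exact identity \(\|\GENmatA \faceprod \GENmatB\|_{\fro}^{2} = \sum_{\rowind} \|\GENmatA(\rowind, :)\|_{2}^{2} \, \|\GENmatB(\rowind, :)\|_{2}^{2}\), followed by bounding the row norms of either factor by their maximum. Nothing is missing.
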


\subsection{HSS Matrices} \label{subsec:hss}

HSS matrices are a class of hierarchical matrices~\cite{Borm_Grasedyck_Hackbusch_2003, Hackbusch_1999, Xia_Chandrasekaran_Gu_Li_2010} widely used in scientific computing~\cite{Corona_Martinsson_Zorin_2015, Martinsson_Rokhlin_2005}.
We first introduce the definition of HSS tree, which is the underlying data structure for HSS matrices.

\begin{definition}[HSS tree,~\cite{Xi_Xia_Cauley_Balakrishnan_2014}] \label{def:hss_tree}
    A tree \(\HSStree\) is called an \emph{HSS tree} with row and column indices \(\HSSrowindset\) and \(\HSScolindset\) if each node \(\HSSnode\) of \(\HSStree\) is associated with two index sets \(\HSSrowindset_{\HSSnode}\) and \(\HSScolindset_{\HSSnode}\), satisfying the following conditions:
    \begin{enumerate}[(1)]
        \item \(\HSSrowindset_{\HSSroot} = \HSSrowindset\), \(\HSScolindset_{\HSSroot} = \HSScolindset\) for the root node \(\HSSroot\).
        \item For a nonleaf node \(\HSSnode\), we have \(\HSSrowindset_{\HSSnode} = \sqcup_{\HSSch \in \ch(\HSSnode)} \HSSrowindset_{\HSSch}\) and \(\HSScolindset_{\HSSnode} = \sqcup_{\HSSch \in \ch(\HSSnode)} \HSScolindset_{\HSSch}\), where the notation \(\sqcup\) means the disjoint union.
    \end{enumerate}
    For a node \(\HSSnode\), we use \(\ch(\HSSnode)\) and \(\sib(\HSSnode)\) to denote the set of children of \(\HSSnode\) and the set of siblings of \(\HSSnode\), respectively.
    For an HSS tree \(\HSStree\), we define the level of each node as follows:
    For the root node \(\HSSroot\), \(\level(\HSSroot) = 0\).
    If \(\HSSnode\) is a nonleaf node and \(\HSSch \in \ch(\HSSnode)\), then \(\level(\HSSch) = \level(\HSSnode) + 1\).
    The maximum level of the tree is denoted by \(\HSSmaxlevel\).
\end{definition}

\begin{definition}[HSS matrix,~\cite{Xi_Xia_Cauley_Balakrishnan_2014}] \label{def:hss_matrix}
  Let \(\HSStree\) be an HSS tree of row and column indices \(\HSSrowindset\) and \(\HSScolindset\).  
  A matrix \(\HSSH \in \complex^{|\HSSrowindset| \times |\HSScolindset|}\) is called an \emph{HSS matrix} about \(\HSStree\) if there are matrices \(\HSSD_{\HSSnode}\), \(\HSSU_{\HSSnode}\), \(\HSSV_{\HSSnode}\), \(\HSSR_{\HSSnode}\), \(\HSSW_{\HSSnode}\) and \(\HSSB_{\HSSrownode, \HSScolnode}\)~(called HSS generators) associated with each node \(\HSSnode\), which satisfy the following conditions:
  \begin{enumerate}[(1)]
    \item
      For a leaf node \(\HSSnode\), \(\HSSD_{\HSSnode} = \HSSH({\HSSrowindset_{\HSSnode}, \HSScolindset_{\HSSnode}})\) is a dense matrix.
    \item
      For a nonleaf node \(\HSSnode\) with children \(\{\HSSch_{\HSSchind}\}_{\HSSchind = 1}^{\HSSchnum}\),
      \(\HSSD_{\HSSnode} = \HSSH({\HSSrowindset_{\HSSnode}, \HSScolindset_{\HSSnode}})\) is a \(\HSSchnum \times \HSSchnum\) block matrix, with the \((\HSSchrowind, \HSSchcolind)\)-th block \(\HSSD_{\HSSnode; \HSSchrowind, \HSSchcolind}\) being \(\HSSD_{\HSSch_{\HSSchind}}\) for \(\HSSchrowind = \HSSchcolind\) and \(\HSSU_{\HSSch_{\HSSchrowind}} \HSSB_{\HSSch_{\HSSchrowind}, \HSSch_{\HSSchcolind}} \HSSV_{\HSSch_{\HSSchcolind}}^{\herm}\) for \(\HSSchrowind \neq \HSSchcolind\).
      The matrices \(\HSSU_{\HSSnode}\) and \(\HSSV_{\HSSnode}\) are \(\HSSchnum \times 1\) block matrices, with the \(\HSSchind\)-th block being \(\HSSU_{\HSSch_{\HSSchind}} \HSSR_{\HSSch_{\HSSchind}}\) and \(\HSSV_{\HSSch_{\HSSchind}} \HSSW_{\HSSch_{\HSSchind}}\), respectively.
  \end{enumerate}
  We call \(\HSSU_{\HSSnode}\) and \(\HSSV_{\HSSnode}^{\herm}\) the \emph{basis matrices}, \(\HSSR_{\HSSnode}\) and \(\HSSW_{\HSSnode}\) the \emph{transfer matrices}, and \(\HSSB_{\HSSrownode, \HSScolnode}\) the \emph{interaction matrices}.
\end{definition}

One important property of HSS matrices, called the \emph{shared basis property}, is that the basis matrices \(\HSSU_{\HSSnode}\) and \(\HSSV_{\HSSnode}^{\herm}\) are bases for the column and row spaces of the corresponding off-diagonal blocks~(called the \emph{HSS blocks}) \(\HSSH(\HSSrowindset_{\HSSnode}, \HSScolindset^{\compl}_{\HSSnode})\) and \(\HSSH(\HSSrowindset^{\compl}_{\HSSnode}, \HSScolindset_{\HSSnode})\) of \(\HSSH\), respectively.
Here \(\HSSrowindset^{\compl}_{\HSSnode} = \HSSrowindset \setminus \HSSrowindset_{\HSSnode}\) and \(\HSScolindset^{\compl}_{\HSSnode} = \HSScolindset \setminus \HSScolindset_{\HSSnode}\).
We use \(\HSSrank_{\HSSnode}\) to denote the maximum rank of the HSS block \(\HSSH(\HSSrowindset_{\HSSnode}, \HSScolindset^{\compl}_{\HSSnode})\) and \(\HSSH(\HSSrowindset^{\compl}_{\HSSnode}, \HSScolindset_{\HSSnode})\).
The \emph{HSS rank} of \(\HSSH\) is defined as the maximum of \(\HSSrank_{\HSSnode}\) over all nodes \(\HSSnode\) in \(\HSStree\).

Throughout this paper, we assume that the HSS matrix is associated with a given HSS tree \(\HSStree\) with maximum level \(\HSSmaxlevel\).
For simplicity, we also assume that \(\HSStree\) is a full tree, i.e., all the leaf nodes are on the same level \(\HSSmaxlevel\), and that the HSS row and column blocks of a node \(\HSSnode\) have a same rank \(\HSSrank_{\HSSnode}\).

\subsection{Recompression of HSS Matrices} \label{subsec:hss_recompression}

An HSS representation may use more basis vectors than needed to approximate its HSS blocks to a prescribed tolerance, increasing storage and subsequent factorization costs.
In this section, we review the recompression algorithm in~\cite[Section~5]{Xia_2012}, adapted to accommodate the quadtree structure.
The algorithm reduces the sizes of the HSS generators through a bottom-up orthogonalization followed by a top-down compression.
Let \(0 < \ranktol < 1\) be the tolerance for the recompression.

The first stage is to orthogonalize the basis matrices, which is done by a bottom-up traversal of the HSS tree.
For every leaf node \(\HSSnode\), compute the thin QR factorizations
\begin{equation*}
  \HSSU_{\HSSnode} = \HSStU_{\HSSnode} \HSSF_{\HSSnode}, \quad
  \HSSV_{\HSSnode} = \HSStV_{\HSSnode} \HSSG_{\HSSnode},
\end{equation*}
where \(\HSStU_{\HSSnode}\) and \(\HSStV_{\HSSnode}\) are orthonormal.
For a nonleaf node \(\HSSnode\) with children \(\{\HSSch_{\HSSchind}\}_{\HSSchind = 1}^{\HSSchnum}\), we first update the interaction matrices by
\begin{equation*}
\HSStB_{\HSSch_{\HSSchrowind}, \HSSch_{\HSSchcolind}}
= \HSSF_{\HSSch_{\HSSchrowind}} \HSSB_{\HSSch_{\HSSchrowind}, \HSSch_{\HSSchcolind}} \HSSG_{\HSSch_{\HSSchcolind}}^{\herm}
\end{equation*}
for every pair of distinct children of \(\HSSnode\).
If \(\HSSnode\) is also nonroot, the factors \(\HSSF_{\HSSch_{\HSSchind}}\) and \(\HSSG_{\HSSch_{\HSSchind}}\) are available after processing its children.
By the nested basis property, it suffices to compute the thin QR factorizations
\begin{equation*}
    \begin{bmatrix}
      \HSSF_{\HSSch_{1}} \HSSR_{\HSSch_{1}} \\
      \vdots \\
      \HSSF_{\HSSch_{\HSSchnum}} \HSSR_{\HSSch_{\HSSchnum}}
    \end{bmatrix}
    =
    \begin{bmatrix}
      \HSStR_{\HSSch_{1}} \\
      \vdots \\
      \HSStR_{\HSSch_{\HSSchnum}}
    \end{bmatrix}
    \HSSF_{\HSSnode}, \quad
    \begin{bmatrix}
      \HSSG_{\HSSch_{1}} \HSSW_{\HSSch_{1}} \\
      \vdots \\
      \HSSG_{\HSSch_{\HSSchnum}} \HSSW_{\HSSch_{\HSSchnum}}
    \end{bmatrix}
    =
    \begin{bmatrix}
      \HSStW_{\HSSch_{1}} \\
      \vdots \\
      \HSStW_{\HSSch_{\HSSchnum}}
    \end{bmatrix}
    \HSSG_{\HSSnode}.
\end{equation*}
This gives the updated transfer matrices \(\HSStR_{\HSSch_{\HSSchind}}\) and \(\HSStW_{\HSSch_{\HSSchind}}\) for each child, as well as the factors \(\HSSF_{\HSSnode}\) and \(\HSSG_{\HSSnode}\) for the current node.
This stage preserves the represented matrix in exact arithmetic and does not perform tolerance-based truncation.

The second stage is a top-down traversal of the HSS tree to compress the basis matrices and the interaction matrices.
For convenience, we still use \(\HSSU_{\HSSnode}\), \(\HSSV_{\HSSnode}\), \(\HSSR_{\HSSnode}\), \(\HSSW_{\HSSnode}\) and \(\HSSB_{\HSSch, \HSSchsib}\) to denote the generators after orthogonalization.
At each nonleaf node \(\HSSnode\), let \(\HSSS_{\HSSnode}^{U}\) and \(\HSSS_{\HSSnode}^{V}\) be the diagonal matrices of retained singular values passed from the preceding level; both are empty at the root.
For each child \(\HSSch \in \ch(\HSSnode)\), form the two matrices
\begin{equation*}
\HSSC_{\HSSch}^{U}
=
\begin{bmatrix}
\bigl(\HSSB_{\HSSch, \HSSchsib}\bigr)_{\HSSchsib \in \sib(\HSSch)}
& \HSStR_{\HSSch} \HSSS_{\HSSnode}^{U}
\end{bmatrix},
\quad
\HSSC_{\HSSch}^{V}
=
\begin{bmatrix}
  \bigl(\HSSB_{\HSSchsib, \HSSch}^{\herm}\bigr)_{\HSSchsib \in \sib(\HSSch)}
  & \HSStW_{\HSSch} \HSSS_{\HSSnode}^{V}
  \end{bmatrix}.
\end{equation*}
Here the blocks indexed by \(\HSSchsib\) are concatenated horizontally, and the last block is omitted when \(\HSSnode\) is the root.
Let the truncated SVDs be
\begin{equation*}
\HSSC_{\HSSch}^{U}
\approx
\HSSP_{\HSSch} \HSSS_{\HSSch}^{U} \HSSX_{\HSSch}^{\herm}, \quad
\HSSC_{\HSSch}^{V}
\approx
\HSSQ_{\HSSch} \HSSS_{\HSSch}^{V} \HSSY_{\HSSch}^{\herm}.
\end{equation*}
The interaction matrices are updated by
\begin{equation*}
\HSStB_{\HSSch_{\HSSchrowind}, \HSSch_{\HSSchcolind}}
= \HSSP_{\HSSch_{\HSSchrowind}}^{\herm} \HSSB_{\HSSch_{\HSSchrowind}, \HSSch_{\HSSchcolind}} \HSSQ_{\HSSch_{\HSSchcolind}}.
\end{equation*}
If \(\HSSnode\) is also nonroot, the transfer matrices are updated by
\begin{equation*}
\HSShR_{\HSSch} = \HSSP_{\HSSch}^{\herm} \HSStR_{\HSSch},
\quad
\HSShW_{\HSSch} = \HSSQ_{\HSSch}^{\herm} \HSStW_{\HSSch},
\end{equation*}
If \(\HSSch\) is a leaf, the basis matrices are updated by
\begin{equation*}
\HSShU_{\HSSch} = \HSSU_{\HSSch} \HSSP_{\HSSch},
\quad
\HSShV_{\HSSch} = \HSSV_{\HSSch} \HSSQ_{\HSSch}.
\end{equation*}
Otherwise, the transfer matrices of its children are updated by
\begin{equation*}
\HSStR_{\HSSchch} = \HSSR_{\HSSchch} \HSSP_{\HSSch},
\quad
\HSStW_{\HSSchch} = \HSSW_{\HSSchch} \HSSQ_{\HSSch},
\quad
\HSSchch \in \ch(\HSSch).
\end{equation*}
The same process is then applied recursively to each nonleaf child using \(\Mat{S}_{\HSSch}^{U}\) and \(\Mat{S}_{\HSSch}^{V}\).

\subsection{The URV Factorization for HSS Matrices} \label{subsec:urv_factorization}

Consider the least-squares problem \(\min_{\HSSc} \|\HSSH \HSSc - \HSSf\|_{2}\), where \(\HSSH\) is an HSS matrix and \(\HSSf\) is a given vector.
In this section, we review the URV factorization for HSS matrices~\cite{Wilber_Epperly_Barnett_2025, Xi_Xia_Cauley_Balakrishnan_2014}, which serves as a direct solver for the least-squares problem.

The URV factorization for HSS matrices is performed in a bottom-up manner on the HSS tree.
Suppose \(\HSSnode\) is a leaf node and \(\HSSD_{\HSSnode} \in \complex^{\HSSnoderowsize_{\HSSnode} \times \HSSnodecolsize_{\HSSnode}}\).
If \(\HSSnoderowsize_{\HSSnode} \geq \HSSnodecolsize_{\HSSnode} + \HSSrank_{\HSSnode}\), a \emph{size reduction} step is performed by a QR factorization:
\begin{equation*}
  \begin{bmatrix}
    \HSSU_{\HSSnode} & \HSSD_{\HSSnode}
  \end{bmatrix}
  = \HSSOm_{\HSSnode}
  \begin{bmatrix}
    \Mat{R}_{1, 1} & \Mat{R}_{1, 2} \\
    \Mat{0} & \Mat{R}_{2, 2} \\
    \hline
    \Mat{0} & \Mat{0}
  \end{bmatrix}
  = \HSSOm_{\HSSnode}
  \begin{bmatrix}
    \HSScU_{\HSSnode} & \HSScD_{\HSSnode} \\
    \Mat{0} & \Mat{0}
  \end{bmatrix},
\end{equation*}
where \(\HSSOm_{\HSSnode} \in \complex^{\HSSnoderowsize_{\HSSnode} \times \HSSnoderowsize_{\HSSnode}}\) is unitary.
The reduced row size, i.e., the number of rows of \(\HSScU_{\HSSnode}\) and \(\HSScD_{\HSSnode}\), is \(\HSSnodereducedrowsize_{\HSSnode} = \HSSnodecolsize_{\HSSnode} + \HSSrank_{\HSSnode}\).
If no size reduction is applied, we set \(\HSSOm_{\HSSnode}\) to be empty and let \(\HSScU_{\HSSnode} = \HSSU_{\HSSnode}\), \(\HSScD_{\HSSnode} = \HSSD_{\HSSnode}\) and \(\HSSnodereducedrowsize_{\HSSnode} = \HSSnoderowsize_{\HSSnode}\).

The next is a \emph{basis elimination} step.
For every leaf node \(\HSSnode\), zeros are introduced into \(\HSSV_{\HSSnode}\) by 
\begin{equation} \label{eq:urv_basis_elimination}
    \HSSV_{\HSSnode} = \HSSQ_{\HSSnode}
    \begin{bmatrix}
        \Mat{0} \\
        \HSShV_{\HSSnode; 2}
    \end{bmatrix},
\end{equation}
where \(\HSSQ_{\HSSnode} \in \complex^{\HSSnodecolsize_{\HSSnode} \times \HSSnodecolsize_{\HSSnode}}\) is unitary and \(\HSShV_{\HSSnode; 2} \in \complex^{\HSSrank_{\HSSnode} \times \HSSrank_{\HSSnode}}\).
This can be done by a reverse QR decomposition.
The diagonal block is then modified by 
\begin{equation*}
\HSStD_{\HSSnode} = \HSScD_{\HSSnode} \HSSQ_{\HSSnode}.
\end{equation*}

Then a \emph{partial decomposition} on the matrix \(\HSStD_{\HSSnode}\) is performed to decouple the unknowns corresponding to the zero and dense parts of~\eqref{eq:urv_basis_elimination}.
First, a QR decomposition is computed on the first block column of \(\HSStD_{\HSSnode}\) as
\begin{equation*}
  \begin{bmatrix}
    \HSStD_{\HSSnode; 1, 1} \\
    \HSStD_{\HSSnode; 2, 1}
  \end{bmatrix}
  = \HSSP_{\HSSnode}
  \begin{bmatrix}
    \HSShD_{\HSSnode; 1, 1} \\
    \Mat{0}
  \end{bmatrix}.
\end{equation*}
Then it follows that
\begin{equation*}
  \HSStD_{\HSSnode}
  = \HSSP_{\HSSnode}
  \begin{bmatrix}
    \HSShD_{\HSSnode; 1, 1} & \HSShD_{\HSSnode; 1, 2} \\
    \Mat{0} & \HSShD_{\HSSnode; 2, 2}
  \end{bmatrix},
\end{equation*}
where
\begin{equation*}
  \begin{bmatrix}
    \HSShD_{\HSSnode; 1, 2} \\
    \HSShD_{\HSSnode; 2, 2}
  \end{bmatrix} = \HSSP_{\HSSnode}^{\herm}
  \begin{bmatrix}
    \HSStD_{\HSSnode; 1, 2} \\
    \HSStD_{\HSSnode; 2, 2}
  \end{bmatrix}.
\end{equation*}
The basis matrix is then modified by
\begin{equation*}
    \HSSP_{\HSSnode}^{\herm} \HSScU_{\HSSnode}
  = \begin{bmatrix}
    \HSShU_{\HSSnode; 1} \\
    \HSShU_{\HSSnode; 2} \\
    \end{bmatrix}.
\end{equation*}

After that, the variables corresponding to the first block column of \(\HSStD_{\HSSnode}\) can be decoupled from the rest variables, and the least-squares problem can be solved by a recursive process, which we describe in the following.
For a nonleaf node \(\HSSnode\) with children \(\{\HSSch_{\HSSchind}\}\), appropriate blocks of the children are merged to form the new generators.
Let \(\HSSD_{\HSSnode}^{+}\), \(\HSSU_{\HSSnode}^{+}\) and \(\HSSV_{\HSSnode}^{+}\) be the block matrices given by
\begin{equation*}
  \begin{gathered}
    \HSSD_{\HSSnode; \HSSchrowind, \HSSchcolind}^{+} =
    \begin{cases}
      \HSShD_{\HSSch_{\HSSchrowind}; 2, 2}, & \HSSchrowind = \HSSchcolind, \\
      \HSShU_{\HSSch_{\HSSchrowind}; 2} \HSSB_{\HSSch_{\HSSchrowind}, \HSSch_{\HSSchcolind}} \HSShV_{\HSSch_{\HSSchcolind}; 2}^{\herm}, & \HSSchrowind \neq \HSSchcolind
    \end{cases} \\
    \HSSU_{\HSSnode; \HSSchind}^{+} = \HSShU_{\HSSch_{\HSSchind}; 2} \HSSR_{\HSSch_{\HSSchind}}, \quad
    \HSSV_{\HSSnode; \HSSchind}^{+} = \HSShV_{\HSSch_{\HSSchind}; 2} \HSSW_{\HSSch_{\HSSchind}}.
  \end{gathered}
\end{equation*}
Then the children of \(\HSSnode\) can be ``removed'' and the node \(\HSSnode\) can be treated as a leaf node.
The generators \(\{\HSSD_{\HSSnode}^{+}\}\), \(\{\HSSU_{\HSSnode}^{+}\}\), \(\{\HSSV_{\HSSnode}^{+}\}\) \(\{\HSSR_{\HSSnode}\}\), \(\{\HSSW_{\HSSnode}\}\) and \(\{\HSSB_{\HSSrownode, \HSScolnode}\}\) define a new HSS matrix of maximum level \(\HSSmaxlevel - 1\).
This HSS can be further factorized by the same discussion above~(with \(\HSSD_{\HSSnode}\), \(\HSSU_{\HSSnode}\) and \(\HSSV_{\HSSnode}\) replaced by \(\HSSD_{\HSSnode}^{+}\), \(\HSSU_{\HSSnode}^{+}\) and \(\HSSV_{\HSSnode}^{+}\) ) until the root node is reached.
At the root node \(\HSSroot\), the QR factorization is performed, i.e., \(\HSSD_{\HSSroot}^{+} = \HSSP_{\HSSroot} \HSShD_{\HSSroot}^{+}\).

Once the URV factors are computed, the solution is obtained by a bottom-up pass and a top-down pass.
The bottom-up pass is to apply the unitary transformations \(\HSSOm_{\HSSnode}\) and \(\HSSP_{\HSSnode}\) to the right-hand side \(\HSSf\).
Starting from every leaf node \(\HSSnode\), if size reduction is performed, the vector \(\HSSf_{\HSSnode}\) is updated by
\begin{equation*}
  \HSSOm_{\HSSnode}^{\herm} \HSSf_{\HSSnode} =
  \begin{bmatrix}
    \HSScf_{\HSSnode} \\
    \times
  \end{bmatrix}, \quad 
  \HSShf_{\HSSnode}
  = \HSSP_{\HSSnode}^{\herm} \HSScf_{\HSSnode} =
  \begin{bmatrix}
    \HSShf_{\HSSnode; 1} \\
    \HSShf_{\HSSnode; 2}
  \end{bmatrix},
\end{equation*}
where the \(\times\) means the corresponding entries are not used in the following steps and can be ignored.
If no size reduction is performed, the first equation is omitted and \(\HSScf_{\HSSnode}\) is replaced by \(\HSSf_{\HSSnode}\) in the second equation.
Then for a nonleaf node \(\HSSnode\) with children \(\{\HSSch_{\HSSchind}\}\), \(\HSSf_{\HSSnode}^{+}\) is obtained by merging its children blocks, i.e., \(\HSSf_{\HSSnode; \HSSchind}^{+} = \HSShf_{\HSSch_{\HSSchind}; 2}\).
This process is repeated~(with \(\HSSf_{\HSSnode}\) replaced by \(\HSSf_{\HSSnode}^{+}\) in the above equations) until the root node is reached.

The top-down pass recovers the solution by back substitution.
At the root node \(\HSSroot\), solve the reduced least-squares problem
\begin{equation*}
  \min_{\HSSc_{\HSSroot}^{+}}
  \|\HSShD_{\HSSroot}^{+} \HSSc_{\HSSroot}^{+} - \HSShf_{\HSSroot}\|_{2}.
\end{equation*}
For each nonleaf node \(\HSSnode\), partition \(\HSSc_{\HSSnode}^{+}\) into the vectors \(\HSShc_{\HSSch; 2}\) associated with its children \(\HSSch \in \ch(\HSSnode)\).
The compressed contribution from columns outside each child is
\begin{equation*}
  \HSShb_{\HSSch}
  =
  \HSSR_{\HSSch} \HSShb_{\HSSnode}
  +
  \sum_{\HSSchsib \in \sib(\HSSch)}
  \HSSB_{\HSSch, \HSSchsib}
  \HSShV_{\HSSchsib; 2}^{\herm}
  \HSShc_{\HSSchsib; 2},
\end{equation*}
where the first term is omitted when \(\HSSnode = \HSSroot\).
The coefficients at each child are recovered by
\begin{equation*}
  \begin{aligned}
    \HSShc_{\HSSch; 1}
    & =
    \HSShD_{\HSSch; 1, 1}^{-1}
    \bigl(
      \HSShf_{\HSSch; 1}
      -
      \HSShD_{\HSSch; 1, 2} \HSShc_{\HSSch; 2}
      -
      \HSShU_{\HSSch; 1} \HSShb_{\HSSch}
    \bigr), \\
    \HSSc_{\HSSch}^{+}
    & =
    \HSSQ_{\HSSch}
    \begin{bmatrix}
      \HSShc_{\HSSch; 1} \\
      \HSShc_{\HSSch; 2}
    \end{bmatrix}.
  \end{aligned}
\end{equation*}
Repeat this process down the tree.
At each leaf node \(\HSSnode\), the corresponding solution block is \(\HSSc_{\HSSnode} = \HSSc_{\HSSnode}^{+}\).

\subsection{A Review of the Direct Inverse Method in 1D Case} \label{subsec:inverse_typeII_1d_nudft}

Consider the 1D type-II NUDFT matrix given by \(\nudftmat(\rowind, \colind) = \e^{-2 \pi \imag \colind \pt_{\rowind}}\) where \(\pt_{\rowind} \in [0, 1)\) for \(\rowind = 0, 1, \dotsc, \numrow - 1\) and \(\colind = 0, 1, \dotsc, \numcol - 1\).
If we define \(\rowptS_{\rowind} = \e^{-2 \pi \imag \pt_{\rowind}} \in \unitcircle\), then \(\nudftmat(\rowind, \colind) = \rowptS_{\rowind}^{\colind}\) and it can be directly verified that \(\nudftmat\) satisfies the following Sylvester equation
\begin{equation} \label{eq:sylvester_equation_1d_nudft_A}
  \rowptSmat \nudftmat - \nudftmat \circulantC = \nudftumat \Mat{e}_{\numcol - 1}^{\trans},
\end{equation}
where \(\rowptSmat = \diag(\rowptS_{0}, \rowptS_{1}, \dotsc, \rowptS_{\numrow - 1})\) is a diagonal matrix, and
\begin{equation*}
  \circulantC =
  \begin{bmatrix}
     &  &  &  & 1 \\
    1 &  &  &  &  \\
     & 1 &  &  &  \\
     &  & \ddots &  &  \\
     &  &  & 1 &
  \end{bmatrix}
\end{equation*}
is a circulant shift matrix and \(\nudftusca_{\rowind} = \rowptS_{\rowind}^{\numcol} - 1\) for \(\rowind = 0, 1, \dotsc, \numrow - 1\).
The matrix \(\circulantC\) can be diagonalized by the DFT matrix, i.e., \(\circulantC = \dftmatinv \circulantD \dftmat\) where \(\dftmat\) is the DFT matrix given by \(\dftmat(\colind, \colindpair) = \e^{-2 \pi \imag \colind \colindpair / \numcol}\) and \(\circulantD = \diag(\colptS_{0}, \colptS_{1}, \dotsc, \colptS_{\numcol - 1})\) is a diagonal matrix.
Here \(\colptS_{\colind} = \unitroot_{\numcol}^{\colind} \in \unitcircle\) and \(\unitroot_{\numcol} = \e^{-2 \pi \imag / \numcol}\) is the \(\numcol\)th root of unity.
Substituting the diagonalization of \(\circulantC\) into~\eqref{eq:sylvester_equation_1d_nudft_A} and multiplying both sides by \(\dftmatinv\) on the right, it follows that
\begin{equation} \label{eq:sylvester_equation_1d_nudft_H}
  \rowptSmat \tnudftmat - \tnudftmat \circulantD = \nudftumat \Mat{v}^{\trans},
\end{equation}
where \(\tnudftmat = \nudftmat \dftmatinv\) and \(\nudftvmat = \dftmatinv \Mat{e}_{\numcol - 1}\).
The entries of \(\nudftvmat\) are given by \(\nudftvsca_{\colind} = \colptS_{\colind} / \numcol\) for \(\colind = 0, 1, \dotsc, \numcol - 1\).
Therefore, \(\tnudftmat\) can be expressed as a Cauchy-like matrix with the entries given by
\begin{equation} \label{eq:1d_nudft_tildeA_entries}
  \tnudftmat(\rowind, \colind)
  = \frac{\nudftusca_{\rowind} \nudftvsca_{\colind}}{\rowptS_{\rowind} - \colptS_{\colind}}
  = \frac{1}{\numcol} \frac{(\rowptS_{\rowind} / \colptS_{\colind})^{\numcol} - 1}{(\rowptS_{\rowind} / \colptS_{\colind}) - 1}
  \eqcolon \nudftkernel(\rowptS_{\rowind}, \colptS_{\colind}).
\end{equation}
When \(\rowptS_{\rowind} = \colptS_{\colind}\), the quotients are interpreted by continuity, giving \(\nudftkernel(\rowptS_{\rowind}, \colptS_{\colind}) = 1\).

In~\cite{Wilber_Epperly_Barnett_2025}, the authors utilize the \emph{displacement structure}~\cite{Beckermann_Townsend_2017} of \(\tnudftmat\) in~\eqref{eq:sylvester_equation_1d_nudft_H} to derive the \emph{low-rank property} of \(\tnudftmat\).
They showed that \(\tnudftmat\) can be approximated by an HSS matrix \(\nudfthss\) with numerical HSS rank \(\HSSrank = \bigO\bigl(\mylog{(1 / \ranktol)} \mylog{\numcol}\bigr)\), where \(\ranktol\) is the accuracy parameter for the approximation.
Specifically, we have the following Theorem.

\begin{theorem}[Numerical rank of HSS blocks of \(\tnudftmat\), Theorem~3.2 in \cite{Wilber_Epperly_Barnett_2025}] \label{thm:1d_nudft_hss_block_rank}
  Let \(\tnudftmat\) be the matrix defined by~\eqref{eq:1d_nudft_tildeA_entries} and \(0 < \ranktol < 1\) and \(\rowindset = \{0, 1, \dotsc, \numrow - 1\}\) and \(\colindset = \{0, 1, \dotsc, \numcol - 1\}\) be the row and column index sets of \(\tnudftmat\) respectively.
  Suppose that \(\nudftintervalx \subset [0, 1)\) is an interval and \(\rowsubindset = \{\rowind : \pt_{\rowind} \in \nudftintervalx\} \subset \rowindset\) and \(\colsubindset = \{\colind: \colind / \numcol \in \nudftintervalx\} \subset \colindset\) are the row and column index sets corresponding to this interval.
  Then for the \(2\)-norm or the Frobenius norm, the HSS row or column block of \(\tnudftmat\) with respect to \(\rowsubindset\) or \(\colsubindset\) has a numerical rank at most \(\HSSrank = \bigO\bigl(\mylog{(1 / \ranktol)} \mylog{\numcoldir_{\col}}\bigr)\) where \(\numcoldir_{\col} = |\colsubindset|\).
\end{theorem}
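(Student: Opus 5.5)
We would prove the statement by exploiting the displacement structure~\eqref{eq:sylvester_equation_1d_nudft_H} and a Zolotarev-type rational approximation argument, following the standard route of Beckermann and Townsend. Consider the HSS row block \(\tnudftmat(\rowsubindset, \colsubindset^{\compl})\), where \(\colsubindset^{\compl} = \colindset \setminus \colsubindset\); the column block \(\tnudftmat(\rowsubindset^{\compl}, \colsubindset)\) is handled symmetrically. Restricting~\eqref{eq:sylvester_equation_1d_nudft_H} to these rows and columns shows that the block \(\Mat{X}\) satisfies a rank-one Sylvester equation \(\rowptSmat_{\rowsubindset} \Mat{X} - \Mat{X} \circulantD_{\colsubindset^{\compl}} = \nudftumat_{\rowsubindset} \nudftvmat_{\colsubindset^{\compl}}^{\trans}\). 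The spectra lie in disjoint sets on \(\unitcircle\). The eigenvalues of \(\rowptSmat_{\rowsubindset}\) are \(\rowptS_{\rowind} = \e^{-2\pi\imag \pt_{\rowind}}\) with \(\pt_{\rowind} \in \nudftintervalx\), so they lie on an arc \(E\). The eigenvalues of \(\circulantD_{\colsubindset^{\compl}}\) are \(\colptS_{\colind}\) with \(\colind/\numcol \notin \nudftintervalx\), so they lie on the complementary arc \(F\).

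The difficulty is that \(E\) and \(F\) touch at the arc endpoints, so the Zolotarev number \(Z_{k}(E,F)\) does not decay. We would therefore split the columns dyadically by distance from the endpoints of \(\nudftintervalx\). Set \(\numcoldir_{\col} = |\colsubindset|\), so the interval has length about \(\numcoldir_{\col}/\numcol\). Near each endpoint, group the columns \(\colind\) with \(\operatorname{dist}(\colind/\numcol, \nudftintervalx) \in [2^{s}/\numcol, 2^{s+1}/\numcol)\) for \(s = 0,1,\dotsc, \bigO(\log \numcoldir_{\col})\). All farther columns form one additional group, at distance at least comparable to the interval length.

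For each group \(F_{s}\) we would apply a Möbius map sending \(E\) and \(F_{s}\) to two real intervals with bounded cross-ratio. The gap is about \(2^{s}/\numcol\), and the length of \(F_{s}\) is also about \(2^{s}/\numcol\). The quantity that matters is the cross-ratio of \(E\) against \(F_{s}\), and this is where the dyadic choice is used. On one side the nearby part of \(E\) is comparable in scale, while on the other side \(E\) is much longer than \(F_{s}\). A single dyadic split of \(F\) is not enough to bound the cross-ratio. The cleaner route is the following. Split \(E\) dyadically near its endpoints too, and note that for well-separated pairs the cross-ratio is bounded. Use the fact that the \(1/(\rowptS - \colptS)\) Cauchy kernel admits a multipole-type bound at each scale. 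Equivalently, invoke the result of Beckermann and Townsend that for \(E = [-1,1]\)-type configurations with gap ratio \(\delta\), \(Z_{k} \le 4\exp(-\pi^{2}k/\log(16/\delta))\).

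Taking \(\delta \sim 1/\numcoldir_{\col}\) for the whole block at once already gives rank \(k = \bigO(\log(1/\ranktol)\log \numcoldir_{\col})\) directly. The Möbius-normalized gap ratio of the two arcs is polynomial in \(1/\numcoldir_{\col}\), and the \(\log(16/\delta)\) factor becomes \(\bigO(\log \numcoldir_{\col})\). We would then invoke the standard displacement-rank bound \(\sigma_{k+1}(\Mat{X}) \le Z_{k}(E,F)\,\|\Mat{X}\|_{2}\) with displacement rank \(1\). This yields the \(2\)-norm claim. The Frobenius-norm claim follows from the singular-value-wise version of the same bound, \(\sigma_{j+k}(\Mat{X}) \le Z_{k}\sigma_{j}(\Mat{X})\), summed over \(j\).

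The main obstacle is the precise gap estimate. One must check that the nearest sample \(\rowptS_{\rowind}\) and the nearest \(\colptS_{\colind}\) outside the interval are separated relative to the arc lengths by a factor at least \(c/\numcoldir_{\col}\) (or \(1/\numcol\)-scale). This holds because the column nodes are on the \(1/\numcol\) grid and \(\colsubindset\) is exactly the grid points inside \(\nudftintervalx\). The gap to the first excluded grid point can be arbitrarily small relative to \(1/\numcol\) when a sample sits right at the boundary. We would handle this by excluding \(\bigO(1)\) boundary columns and absorbing them into the rank as an additive constant, or by shrinking \(E\) slightly at the cost of a constant. This leaves the stated \(\bigO\bigl(\mylog{(1/\ranktol)}\mylog{\numcoldir_{\col}}\bigr)\) bound.
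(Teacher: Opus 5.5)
Your final argument is correct and is essentially the cited proof of Wilber, Epperly and Barnett, carrying exactly the two refinements the paper mentions: restrict the rank-one displacement equation to the block, and discard the (at most two) grid columns nearest the endpoints of \([a,b)\) so the arcs are separated by at least \(1/\numcol\) on each side. Then observe that the cross-ratio \(\kappa\) of an arc of length \(b-a \approx \numcoldir_{\col}/\numcol\) against such gaps is \(\bigO(\numcoldir_{\col}^{2})\), apply the Beckermann--Townsend bound \(Z_{k} \le 4\exp\bigl(-\pi^{2}k/\log(16\kappa)\bigr)\), and sum \(\sigma_{j+k} \le Z_{k}\sigma_{j}\) for the Frobenius case. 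The dyadic/multipole detour is unnecessary, and of your two fixes for the touching arcs only column removal is immediate: ``shrinking \(E\)'' discards rows, and since arbitrarily many samples may lie within \(1/\numcol\) of an endpoint, their contribution is not a free additive constant without a separate separation argument.
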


There are two main differences between Theorem~\ref{thm:1d_nudft_hss_block_rank} and the original result in~\cite{Wilber_Epperly_Barnett_2025}.
First, the original result in~\cite{Wilber_Epperly_Barnett_2025} is for the \(2\)-norm while we add the Frobenius norm in Theorem~\ref{thm:1d_nudft_hss_block_rank}.
This is because the singular values of the corresponding submatrix decay exponentially.
The second is that the rank bound here is more refined by carefully tracking the inequality in the proof of~\cite[Theorem~3.2]{Wilber_Epperly_Barnett_2025}.

From the Sylvester equation~\eqref{eq:sylvester_equation_1d_nudft_H}, the HSS approximation \(\nudfthss\) can be constructed by combining the \emph{factored alternating direction implicit}~(fADI) method~\cite{Benner_Li_Truhar_2009} and interpolative decomposition~\cite{Cheng_Gimbutas_Martinsson_Rokhlin_2005}, at a complexity of \(\bigO\bigl((\numrow + \numcol) \mylog[2]{\numcol}\bigr)\).
Once the HSS approximation \(\nudfthss\) is constructed, the URV factorization described in Section~\ref{subsec:urv_factorization} is performed; together, these form the offline steps of the direct inverse method.
For a given right-hand side \(\targetvvec \in \complex^{\numrow}\), the online step computes an approximate least-squares solution
\(\coeffvec = \dftmatinv \nudfthss^{\dagger} \targetvvec\)
by one application of the URV solver followed by an iFFT.
Therefore, the online complexity is \(\bigO\bigl((\numrow + \numcol) \mylog{\numcol}\bigr)\).

\section{HSS Approximation via Face-Splitting Products} \label{sec:low_rank_property_nudft_matrix}

Section~\ref{subsec:kernel_matrix_perspective} derives a separable kernel representation of the transformed 2D NUDFT matrix and motivates its off-diagonal low-rank structure.
Section~\ref{subsec:face_splitting_product_hss_tree_matrix} develops the face-splitting construction of HSS trees and matrices, with bounds on the ranks at each level.
Section~\ref{subsec:error_estimate_hss_approximation} bounds the matrix approximation error in terms of the errors in the two 1D factors.

\subsection{Kernel Matrix Perspective} \label{subsec:kernel_matrix_perspective}

Consider the 2D type-II NUDFT matrix \(\nudftmat \in \complex^{\numrow \times \numcol}\) given by~\eqref{eq:typeII_2d_nudft_matrix}.
Let \(\nudftmatx \in \complex^{\numrow \times \numcolx}\) and \(\nudftmaty \in \complex^{\numrow \times \numcoly}\) be the type-II NUDFT in \(\dirx\)- and \(\diry\)- direction respectively, i.e., \(\nudftmatx(\rowind, \colindx) = \e^{-2 \pi \imag \colindx \ptx_{\rowind}}\) and \(\nudftmaty(\rowind, \colindy) = \e^{-2 \pi \imag \colindy \pty_{\rowind}}\).
Then \(\nudftmat\) can be written as the face-splitting product \(\nudftmat = \nudftmatx \faceprod \nudftmaty\).

Let \(\dftmatx \in \complex^{\numcolx \times \numcolx}\) and \(\dftmaty \in \complex^{\numcoly \times \numcoly}\) be the DFT matrices in \(\dirx\)- and \(\diry\)- direction respectively and \(\dftmat = \dftmatx \otimes \dftmaty \in \complex^{\numcol \times \numcol}\) be the DFT matrix in 2D.
Following the discussions in Section~\ref{subsec:inverse_typeII_1d_nudft}, if we define \(\tnudftmatx = \nudftmatx \dftmatxinv\),  \(\tnudftmaty = \nudftmaty \dftmatyinv\) and \(\tnudftmat = \nudftmat \dftmatinv\), then it can also be verified that \(\tnudftmat = \tnudftmatx \faceprod \tnudftmaty\).
Using~\eqref{eq:1d_nudft_tildeA_entries}, the entries of \(\tnudftmat\) can be written as
\begin{equation} \label{eq:2d_nudft_tildeA_entries}
  \begin{aligned}
    \tnudftmat\bigl(\rowind, (\colindx, \colindy)\bigr)
    & = 
    \frac{1}{\numcolx} \frac{\bigl(\rowptSx_{\rowind} / \colptSx_{\colindx}\bigr)^{\numcolx} - 1}{\bigl(\rowptSx_{\rowind} / \colptSx_{\colindx}\bigr) - 1}
    \frac{1}{\numcoly} \frac{\bigl(\rowptSy_{\rowind} / \colptSy_{\colindy}\bigr)^{\numcoly} - 1}{\bigl(\rowptSy_{\rowind} / \colptSy_{\colindy}\bigr) - 1} \\
    & = \nudftkernelx\bigl(\rowptSx_{\rowind}, \colptSx_{\colindx}\bigr) \nudftkernely\bigl(\rowptSy_{\rowind}, \colptSy_{\colindy}\bigr) \\
    & \eqcolon \nudftkernel\bigl(\nudftrowpt_{\rowind}, \nudftcolpt_{(\colindx, \colindy)}\bigr),
  \end{aligned}
\end{equation}
where \(\rowptSx_{\rowind} = \e^{-2 \pi \imag \ptx_{\rowind}}\), \(\rowptSy_{\rowind} = \e^{-2 \pi \imag \pty_{\rowind}}\), \(\colptSx_{\colindx} = \e^{-2 \pi \imag \colindx / \numcolx}\) and \(\colptSy_{\colindy} = \e^{-2 \pi \imag \colindy / \numcoly}\) belong to the unit circle \(\unitcircle\), \(\nudftkernelx\) and \(\nudftkernely\) are corresponding kernel functions in \(\dirx\)- and \(\diry\)- direction respectively, and \(\nudftrowpt_{\rowind} = (\rowptSx_{\rowind}, \rowptSy_{\rowind})\) and \(\nudftcolpt_{(\colindx, \colindy)} = (\colptSx_{\colindx}, \colptSy_{\colindy})\) are points on \(\unitcircle^{2}\).
Consequently, \(\tnudftmat\) can be viewed as a kernel matrix with the kernel function
\begin{equation} \label{eq:2d_nudft_kernel_function}
  \nudftkernel(\GENzmat, \GENwmat) = \nudftkernelx(\GENzsca_{1}, \GENwsca_{1}) \nudftkernely(\GENzsca_{2}, \GENwsca_{2}), \quad \GENzmat = (\GENzsca_{1}, \GENzsca_{2}) \in \unitcircle^{2}, \quad \GENwmat = (\GENwsca_{1}, \GENwsca_{2}) \in \unitcircle^{2}.
\end{equation}
We next use the separable kernel representation~\eqref{eq:2d_nudft_tildeA_entries} to motivate the HSS approximation and estimate its off-diagonal ranks.
The following lemma bounds the rank of a face-splitting product by the product of the ranks of its factors.

\begin{lemma}[Rank of the matrix face-splitting product] \label{lem:face_splitting_exact_rank}
  Let \(\GENmatA \in \complex^{\GENrowsize \times \GENcolsizeA}\) and \(\GENmatB \in \complex^{\GENrowsize \times \GENcolsizeB}\) be two matrices and \(\GENmat = \GENmatA \faceprod \GENmatB\), then \(\rank(\GENmat) \leq \rank(\GENmatA) \rank(\GENmatB)\).
\end{lemma}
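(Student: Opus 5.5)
The plan is to factor each matrix through a low-rank decomposition and then push the factorizations through the face-splitting product using the mixed-product rule of the Kronecker product, applied row by row. Let \(\GENrankA = \rank(\GENmatA)\) and \(\GENrankB = \rank(\GENmatB)\). Take rank factorizations
\[
\GENmatA = \GENLRleftA \GENLRrightA^{\herm}, \qquad \GENmatB = \GENLRleftB \GENLRrightB^{\herm},
\]
with \(\GENLRleftA \in \complex^{\GENrowsize \times \GENrankA}\), \(\GENLRrightA \in \complex^{\GENcolsizeA \times \GENrankA}\), \(\GENLRleftB \in \complex^{\GENrowsize \times \GENrankB}\) and \(\GENLRrightB \in \complex^{\GENcolsizeB \times \GENrankB}\). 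If either rank is zero, the corresponding matrix vanishes, so \(\GENmat = \Mat{0}\) and there is nothing to prove.

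The key identity I would verify is
\[
\GENmatA \faceprod \GENmatB = (\GENLRleftA \faceprod \GENLRleftB)\,(\GENLRrightA \otimes \GENLRrightB)^{\herm}.
\]
It suffices to check this row by row. For row \(\rowind\), we have \(\GENmatA(\rowind,:) = \GENLRleftA(\rowind,:)\GENLRrightA^{\herm}\) and similarly for \(\GENmatB\). The mixed-product property \((\Mat{a}\Mat{X}) \otimes (\Mat{b}\Mat{Y}) = (\Mat{a}\otimes\Mat{b})(\Mat{X}\otimes\Mat{Y})\) then gives
\[
\GENmatA(\rowind,:) \otimes \GENmatB(\rowind,:) = \bigl(\GENLRleftA(\rowind,:) \otimes \GENLRleftB(\rowind,:)\bigr)\bigl(\GENLRrightA^{\herm} \otimes \GENLRrightB^{\herm}\bigr).
\]
The first factor on the right is exactly row \(\rowind\) of \(\GENLRleftA \faceprod \GENLRleftB\). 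For the second factor we also use \(\GENLRrightA^{\herm} \otimes \GENLRrightB^{\herm} = (\GENLRrightA \otimes \GENLRrightB)^{\herm}\). Stacking the rows proves the identity.

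With the identity in hand, the matrix \(\GENmat\) factors as an \(\GENrowsize \times (\GENrankA\GENrankB)\) matrix times a \((\GENrankA\GENrankB) \times (\GENcolsizeA\GENcolsizeB)\) matrix. Hence \(\rank(\GENmat) \le \GENrankA\GENrankB\), which is the claim.

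There is no real obstacle. The only point needing care is the consistency of the lexicographic ordering of the Kronecker column index with the mixed-product rule, and this is automatic because both sides use the standard Kronecker convention.
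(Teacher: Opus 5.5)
Your proof is correct and follows the paper's argument: rank factorizations of both factors, the identity \(\GENmatA \faceprod \GENmatB = (\GENLRleftA \faceprod \GENLRleftB)(\GENLRrightA \otimes \GENLRrightB)^{\herm}\), and the resulting inner dimension \(\GENrankA \GENrankB\). Your row-by-row check via the mixed-product property and your separate handling of the zero-rank case fill in details the paper leaves implicit.
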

\begin{proof}
  For \(\GENindAB = 1, 2\), let
  \(\GENmat_{\GENindAB}
  = \GENLRleft_{\GENindAB} \GENLRright_{\GENindAB}^{\herm}\)
  be a rank factorization, where
  \(\GENrank_{\GENindAB} = \rank(\GENmat_{\GENindAB})\),
  \(\GENLRleft_{\GENindAB} \in \complex^{\GENrowsize \times \GENrank_{\GENindAB}}\),
  and
  \(\GENLRright_{\GENindAB} \in \complex^{\GENcolsize_{\GENindAB} \times \GENrank_{\GENindAB}}\).
  By the rowwise definition of the face-splitting product,
  \begin{equation*}
    \GENmatA \faceprod \GENmatB
    =
    (\GENLRleftA \faceprod \GENLRleftB)
    (\GENLRrightA \otimes \GENLRrightB)^{\herm}.
  \end{equation*}
  Therefore, \(\rank(\GENmat)\) is at most \(\GENrankA \GENrankB\).
\end{proof}

\begin{figure}
  \centering
  \includegraphics[width=0.3\textwidth]{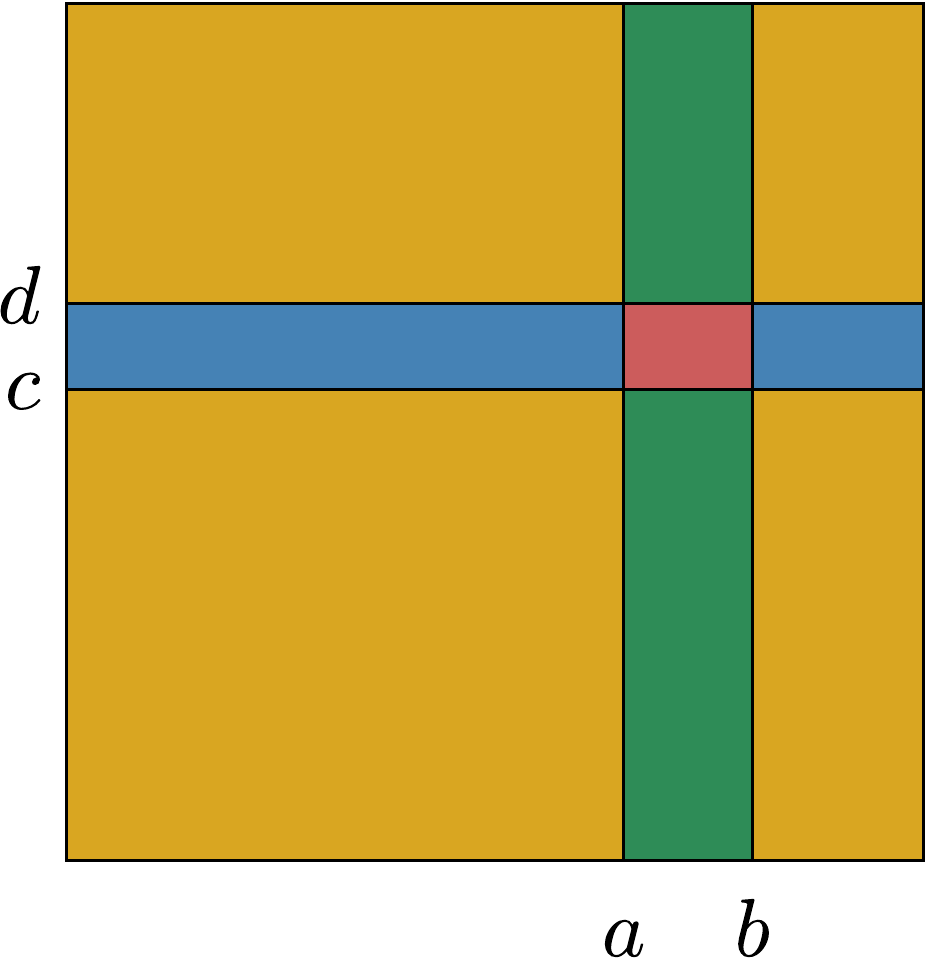}
  \caption{
    The partition of \([0, 1)^2\) according to a rectangular domain.
    The red box represents the rectangular domain \(\nudftintervalx \times \nudftintervaly\).
    The blue boxes represent the points whose \(\dirx\)-coordinate not in \(\nudftintervalx\) but \(\diry\)-coordinate in \(\nudftintervaly\).
    The green boxes represent the points whose \(\dirx\)-coordinate in \(\nudftintervalx\) but \(\diry\)-coordinate not in \(\nudftintervaly\).
    The yellow boxes represent the points whose \(\dirx\)-coordinate not in \(\nudftintervalx\) and \(\diry\)-coordinate not in \(\nudftintervaly\).
    } \label{fig:low_rank_intuition}
\end{figure}

Using the parametrization \(\GENzsca = \e^{-2 \pi \imag \tsca}\) with \(\tsca \in [0, 1)\), we identify each row and column point in \(\unitcircle^{2}\) with a point in the periodic domain \([0, 1)^{2}\).
The first class is the rectangle \(\nudftintervalx \times \nudftintervaly\), shown in red in Figure~\ref{fig:low_rank_intuition}.
The second, third, and fourth classes consist of points outside the corresponding intervals in the \(\dirx\)-coordinate only, the \(\diry\)-coordinate only, and both coordinates, respectively; these are shown in blue, green, and yellow.

Let \(\rowsubindset\) and \(\colsubindset = \colsubindsetx \times \colsubindsety\) be row and column index sets of \(\tnudftmat\) respectively, such that the corresponding row and column points belong to the first class.
Define \(\rowsubindset^{\compl} = \rowindset \setminus \rowsubindset\) and \(\colsubindset^{\compl} = \colindset \setminus \colsubindset\).
Suppose \(|\colsubindsetx| = \numcolx_{\col}\) and \(|\colsubindsety| = \numcoly_{\col}\) and \(\numcoldir_{\col} = \max\{\numcolx_{\col}, \numcoly_{\col}\}\).

Consider the HSS row block \(\tnudftmat(\rowsubindset, \colsubindset^{\compl})\).
We can partition it as
\begin{equation} \label{eq:typeII_2d_hss_row_block_partition}
  \tnudftmat(\rowsubindset, \colsubindset^{\compl})
  = 
  \begin{bmatrix}
    \tnudftmat(\rowsubindset, \colsubindset_{\myblue}) & \tnudftmat(\rowsubindset, \colsubindset_{\mygreen}) & \tnudftmat(\rowsubindset, \colsubindset_{\myyellow})
  \end{bmatrix},
\end{equation}
where \(\colsubindset_{\myblue}\), \(\colsubindset_{\mygreen}\) and \(\colsubindset_{\myyellow}\) denote the column index sets corresponding to the second, third and fourth classes respectively.
Note that these submatrices can be further factorized as
\begin{equation} \label{eq:typeII_2d_hss_row_block_submatrix_factorization}
  \begin{aligned}
    \tnudftmat\bigl(\rowsubindset, \colsubindset_{\myblue}\bigr)
    & = \tnudftmatx\bigl(\rowsubindset, \colsubindsetxcompl\bigr) \faceprod \tnudftmaty\bigl(\rowsubindset, \colsubindsety\bigr), \\
    \tnudftmat\bigl(\rowsubindset, \colsubindset_{\mygreen}\bigr)
    & = \tnudftmatx\bigl(\rowsubindset, \colsubindsetx\bigr) \faceprod \tnudftmaty\bigl(\rowsubindset, \colsubindsetycompl\bigr), \\
    \tnudftmat\bigl(\rowsubindset, \colsubindset_{\myyellow}\bigr)
    & = \tnudftmatx\bigl(\rowsubindset, \colsubindsetxcompl\bigr) \faceprod \tnudftmaty\bigl(\rowsubindset, \colsubindsetycompl\bigr).
  \end{aligned}
\end{equation}

By Theorem~\ref{thm:1d_nudft_hss_block_rank}, the factors
\(\tnudftmatx(\rowsubindset, \colsubindsetxcompl)\) and
\(\tnudftmaty(\rowsubindset, \colsubindsetycompl)\)
admit rank-\(\HSSrank\) approximations, where
\(\HSSrank = \bigO\bigl(\mylog{(1 / \ranktol)} \mylog{\numcoldir_{\col}}\bigr)\).
Substituting these approximations into~\eqref{eq:typeII_2d_hss_row_block_submatrix_factorization} and applying Lemma~\ref{lem:face_splitting_exact_rank} gives approximations of the blue, green, and yellow submatrices with ranks at most
\(\HSSrank \numcoldir_{\col}\),
\(\HSSrank \numcoldir_{\col}\), and
\(\HSSrank^{2}\), respectively.
The resulting approximation of the HSS row block therefore has rank at most
\(2 \HSSrank \numcoldir_{\col} + \HSSrank^{2}\),
which is \(\bigO(\numcoldir_{\col} \mylog{\numcoldir_{\col}})\) for fixed \(\ranktol\).
The same argument applies to the HSS column block.
For a square frequency grid, this motivates an HSS rank model of
\(\bigO(\sqrt{\numcol} \mylog{\numcol})\) at fixed accuracy.
The propagation of approximation errors through the face-splitting product is considered in Section~\ref{subsec:error_estimate_hss_approximation}.

\subsection{Face-Splitting Product of HSS Trees and HSS Matrices} \label{subsec:face_splitting_product_hss_tree_matrix}

Let \(\HSStreex\) be an HSS tree with row and column index sets \(\HSSrowindset\) and \(\HSScolindsetx\) and \(\HSStreey\) be an HSS tree with row and column index sets \(\HSSrowindset\) and \(\HSScolindsety\).
Suppose \(\HSScolindset = \HSScolindsetx \times \HSScolindsety\).
We construct a new HSS tree \(\HSStree\) with row and column index sets \(\HSSrowindset\) and \(\HSScolindset\) as follows.

\begin{definition}[Face-splitting product of HSS trees] \label{def:face_splitting_product_hss_tree}
  Let \(\HSStreex\) and \(\HSStreey\) be two HSS trees of row and column index sets \(\HSSrowindset\) and \(\HSScolindsetx\) and \(\HSSrowindset\) and \(\HSScolindsety\) respectively, then the face-splitting product of \(\HSStreex\) and \(\HSStreey\), denoted by \(\HSStree = \HSStreex \faceprod \HSStreey\), is defined as follows.
  \begin{enumerate}[(1)]
    \item The root of \(\HSStree\) is \(\HSSroot = (\HSSrootx, \HSSrooty)\), where \(\HSSrootx\) and \(\HSSrooty\) are the roots of \(\HSStreex\) and \(\HSStreey\) respectively.
    \item If \(\HSSnodex\) is a leaf and \(\HSSnodey\) is a nonleaf, then \(\HSSnode = (\HSSnodex, \HSSnodey)\) is a nonleaf node of \(\HSStree\) and its children are given by \(\ch(\HSSnode) = \{(\HSSnodex, \HSSchy): \HSSchy \in \ch(\HSSnodey)\}\).
    \item If \(\HSSnodex\) is a nonleaf and \(\HSSnodey\) is a leaf, then \(\HSSnode = (\HSSnodex, \HSSnodey)\) is a nonleaf node of \(\HSStree\) and its children are given by \(\ch(\HSSnode) = \{(\HSSchx, \HSSnodey): \HSSchx \in \ch(\HSSnodex)\}\).
    \item If \(\HSSnodex\) and \(\HSSnodey\) are nonleaf, then \(\HSSnode = (\HSSnodex, \HSSnodey)\) is a nonleaf node of \(\HSStree\) and its children are given by \(\ch(\HSSnode) = \{(\HSSchx, \HSSchy): \HSSchx \in \ch(\HSSnodex), \HSSchy \in \ch(\HSSnodey)\}\).
    \item If \(\HSSnodex\) and \(\HSSnodey\) are both leaves, then \(\HSSnode = (\HSSnodex, \HSSnodey)\) is a leaf node of \(\HSStree\).
    \item Each node \(\HSSnode = (\HSSnodex, \HSSnodey)\) is associated with the row index set \(\HSSrowindset_{\HSSnode} = \HSSrowindset_{\HSSnodex} \cap \HSSrowindset_{\HSSnodey}\) and the column index set \(\HSScolindset_{\HSSnode} = \HSScolindsetx_{\HSSnodex} \times \HSScolindsety_{\HSSnodey}\).
  \end{enumerate}
\end{definition}

Typically, from the discussions in Section~\ref{subsec:inverse_typeII_1d_nudft}, the HSS tree \(\HSStreex\) and \(\HSStreey\) are constructed by the bisection of the periodic interval \([0, 1)\).
Therefore, the HSS tree \(\HSStree\) can be viewed as a tree constructed by the quadrisection of \([0, 1)^{2}\).
Note that each node of \(\HSStree\) is a ``tensor product of'' a node of \(\HSStreex\) and a node of \(\HSStreey\), corresponding to a rectangular domain in \([0, 1)^{2}\).
We first prove the face-splitting product of two HSS trees is still an HSS tree and then prove the face-splitting product of two HSS matrices is still an HSS matrix.

\begin{theorem}[Face-splitting product of HSS trees is an HSS tree] \label{thm:face_splitting_product_hss_tree}
  Let \(\HSStree\) be a face-splitting product of two HSS trees \(\HSStreex\) and \(\HSStreey\) of row and column index sets \(\HSSrowindset\) and \(\HSScolindsetx\) and \(\HSSrowindset\) and \(\HSScolindsety\) respectively, then \(\HSStree\) is an HSS tree of row and column index sets \(\HSSrowindset\) and \(\HSScolindset\).
\end{theorem}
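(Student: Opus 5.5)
We check directly the two conditions of Definition~\ref{def:hss_tree} for the tree \(\HSStree = \HSStreex \faceprod \HSStreey\), using the index sets assigned in item~(6) of Definition~\ref{def:face_splitting_product_hss_tree}. Before that, we confirm that \(\HSStree\) is a well-defined finite tree. Every node has the form \((\HSSnodex, \HSSnodey)\) with \(\HSSnodex \in \HSStreex\) and \(\HSSnodey \in \HSStreey\). Items (2)--(5) cover all four leaf/nonleaf combinations, so the children of each node are unambiguously determined. Passing from a node to a child increases \(\level(\HSSnodex) + \level(\HSSnodey)\) by at least one, so the recursion starting from \(\HSSroot\) terminates.

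\emph{Root condition.} For \(\HSSroot = (\HSSrootx, \HSSrooty)\), item~(6) gives \(\HSSrowindset_{\HSSroot} = \HSSrowindset_{\HSSrootx} \cap \HSSrowindset_{\HSSrooty} = \HSSrowindset \cap \HSSrowindset = \HSSrowindset\). Likewise, \(\HSScolindset_{\HSSroot} = \HSScolindsetx \times \HSScolindsety = \HSScolindset\). Both equalities use condition~(1) of Definition~\ref{def:hss_tree} for \(\HSStreex\) and \(\HSStreey\).

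\emph{Partition condition.} Let \(\HSSnode = (\HSSnodex, \HSSnodey)\) be a nonleaf node. We treat the three cases (2), (3), (4) separately.

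In case~(4), the children are all pairs \((\HSSchx, \HSSchy)\) with \(\HSSchx \in \ch(\HSSnodex)\) and \(\HSSchy \in \ch(\HSSnodey)\). For the rows, we use distributivity of intersection over disjoint unions:
\begin{equation*}
  \HSSrowindset_{\HSSnodex} \cap \HSSrowindset_{\HSSnodey}
  = \Bigl(\bigsqcup_{\HSSchx} \HSSrowindset_{\HSSchx}\Bigr) \cap \Bigl(\bigsqcup_{\HSSchy} \HSSrowindset_{\HSSchy}\Bigr)
  = \bigsqcup_{\HSSchx, \HSSchy} \bigl(\HSSrowindset_{\HSSchx} \cap \HSSrowindset_{\HSSchy}\bigr).
\end{equation*}
The pieces on the right are pairwise disjoint. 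If \((\HSSchx, \HSSchy) \neq (\HSSchx', \HSSchy')\), the two pairs differ in at least one coordinate, and in that coordinate the two sets are disjoint. For the columns, the Cartesian product distributes over disjoint unions in each factor:
\begin{equation*}
  \Bigl(\bigsqcup_{\HSSchx} \HSScolindsetx_{\HSSchx}\Bigr) \times \Bigl(\bigsqcup_{\HSSchy} \HSScolindsety_{\HSSchy}\Bigr)
  = \bigsqcup_{\HSSchx, \HSSchy} \HSScolindsetx_{\HSSchx} \times \HSScolindsety_{\HSSchy}.
\end{equation*}
Disjointness of these products follows in the same way.

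Cases (2) and (3) are the degenerate versions of case~(4) in which one factor is not split. In case~(2), for example, we get \(\HSSrowindset_{\HSSnodex} \cap \bigsqcup_{\HSSchy} \HSSrowindset_{\HSSchy} = \bigsqcup_{\HSSchy} (\HSSrowindset_{\HSSnodex} \cap \HSSrowindset_{\HSSchy})\) and \(\HSScolindsetx_{\HSSnodex} \times \bigsqcup_{\HSSchy} \HSScolindsety_{\HSSchy} = \bigsqcup_{\HSSchy} \HSScolindsetx_{\HSSnodex} \times \HSScolindsety_{\HSSchy}\).

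There is no substantial obstacle. The only point needing care is the row sets: an intersection \(\HSSrowindset_{\HSSchx} \cap \HSSrowindset_{\HSSchy}\) may be empty. Definition~\ref{def:hss_tree} does not forbid empty index sets, so the disjoint-union identities above still hold, and we will state explicitly that empty row sets are allowed.
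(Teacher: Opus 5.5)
Your proposal is correct and follows essentially the same route as the paper. Both arguments check the root condition directly, then verify the partition condition by distributing the intersection of row sets and the Cartesian product of column sets over the disjoint unions of children. The differences are cosmetic: the paper writes out case~(2) and calls the other cases similar, whereas you write out case~(4) and add two harmless remarks, on the well-definedness of the product tree and on possibly empty row sets.
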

\begin{proof}
  It suffices to verify that the conditions in Definition~\ref{def:hss_tree} for \(\HSStree\).
  \begin{enumerate}[(1)]
    \item For the root node \(\HSSroot = (\HSSrootx, \HSSrooty)\), we have \(\HSSrowindset_{\HSSroot} = \HSSrowindset_{\HSSrootx} \cap \HSSrowindset_{\HSSrooty} = \HSSrowindset \cap \HSSrowindset = \HSSrowindset\) and \(\HSScolindset_{\HSSroot} = \HSScolindsetx_{\HSSrootx} \times \HSScolindsety_{\HSSrooty} = \HSScolindsetx \times \HSScolindsety = \HSScolindset\).
    \item If \(\HSSnode = (\HSSnodex, \HSSnodey)\) is a nonleaf node, then there are three cases.
    We only give the proof for the first case and the proof for the other two cases can be done by similar arguments.
    If \(\HSSnodex\) is a leaf and \(\HSSnodey\) is a nonleaf, then the children of \(\HSSnode\) are given by \(\ch(\HSSnode) = \{(\HSSnodex, \HSSchy): \HSSchy \in \ch(\HSSnodey)\}\).
    Therefore,
    \begin{equation*}
    \begin{aligned}
      \HSSrowindset_{\HSSnode}
      &=
      \HSSrowindset_{\HSSnodex}
      \bigcap
      \HSSrowindset_{\HSSnodey}
      =
      \HSSrowindset_{\HSSnodex}
      \bigcap
      \biggl(
        \bigsqcup_{\HSSchy\in\ch(\HSSnodey)}
        \HSSrowindset_{\HSSchy}
      \biggr)
      \\
      &=
      \bigsqcup_{\HSSchy\in\ch(\HSSnodey)}
      \biggl(
        \HSSrowindset_{\HSSnodex}
        \bigcap
        \HSSrowindset_{\HSSchy}
      \biggr)
      =
      \bigsqcup_{\HSSch\in\ch(\HSSnode)}
      \HSSrowindset_{\HSSch},
      \\
      \HSScolindset_{\HSSnode}
      &=
      \HSScolindsetx_{\HSSnodex}
      \times
      \HSScolindsety_{\HSSnodey}
      =
      \HSScolindsetx_{\HSSnodex}
      \times
      \biggl(
        \bigsqcup_{\HSSchy\in\ch(\HSSnodey)}
        \HSScolindsety_{\HSSchy}
      \biggr)
      \\
      &=
      \bigsqcup_{\HSSchy\in\ch(\HSSnodey)}
      \biggl(
        \HSScolindsetx_{\HSSnodex}
        \times
        \HSScolindsety_{\HSSchy}
      \biggr)
      =
      \bigsqcup_{\HSSch\in\ch(\HSSnode)}
      \HSScolindset_{\HSSch}.
    \end{aligned}
    \end{equation*}
  \end{enumerate}
  This completes the proof.
\end{proof}

\begin{theorem}[Face-splitting product of HSS matrices is an HSS matrix] \label{thm:face_splitting_product_hss_matrix}
  Let \(\HSStree\) be a face-splitting product of two HSS trees \(\HSStreex\) and \(\HSStreey\) of row and column index sets \(\rowindset\) and \(\colindsetx\) and \(\rowindset\) and \(\colindsety\) respectively.
  If \(\HSSHx\) and \(\HSSHy\) are two HSS matrices corresponding to \(\HSStreex\) and \(\HSStreey\), then \(\HSSH = \HSSHx \faceprod \HSSHy\) is an HSS matrix for the HSS tree \(\HSStree\).

  Furthermore, suppose that all leaves of \(\HSStreex\) and \(\HSStreey\) are at the same level \(\HSSmaxlevel\).
  Let \(\HSSrankx_{\HSSmylevel}\) and \(\HSSranky_{\HSSmylevel}\) bound the HSS block ranks of the two factor matrices at level \(\HSSmylevel\), and let \(\HSSdiagsizex_{\HSSmylevel}\) and \(\HSSdiagsizey_{\HSSmylevel}\) be the maximum numbers of columns of their diagonal blocks at that level.
  Then, for any node \(\HSSnode = (\HSSnodex, \HSSnodey)\) of \(\HSStree\) at level \(\HSSmylevel\), the corresponding HSS block ranks are at most
  \(\HSSrankx_{\HSSmylevel} \HSSdiagsizey_{\HSSmylevel}
  + \HSSranky_{\HSSmylevel} \HSSdiagsizex_{\HSSmylevel}
  + \HSSrankx_{\HSSmylevel} \HSSranky_{\HSSmylevel}\).
\end{theorem}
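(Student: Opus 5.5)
The plan is to build the HSS generators of \(\HSSH\) on \(\HSStree\) explicitly from those of \(\HSSHx\) and \(\HSSHy\), node by node. The main tool is the mixed-product identity for face-splitting products already used in Lemma~\ref{lem:face_splitting_exact_rank}, namely \((\Mat{U}_1 \Mat{V}_1^{\herm}) \faceprod (\Mat{U}_2 \Mat{V}_2^{\herm}) = (\Mat{U}_1 \faceprod \Mat{U}_2)(\Mat{V}_1 \otimes \Mat{V}_2)^{\herm}\). It holds row by row because \((\Mat{a}^{\trans}\Mat{V}_1^{\herm}) \otimes (\Mat{b}^{\trans}\Mat{V}_2^{\herm}) = (\Mat{a}\otimes \Mat{b})^{\trans}(\Mat{V}_1\otimes\Mat{V}_2)^{\herm}\). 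Since \(\HSSrowindset_{\HSSnode} = \HSSrowindset_{\HSSnodex}\cap\HSSrowindset_{\HSSnodey}\), every block \(\HSSH(\HSSrowindset_{\HSSnode}, \HSScolindsetx_{\alpha}\times\HSScolindsety_{\beta})\) equals \(\HSSHx(\HSSrowindset_{\HSSnode},\HSScolindsetx_{\alpha}) \faceprod \HSSHy(\HSSrowindset_{\HSSnode},\HSScolindsety_{\beta})\). By Theorem~\ref{thm:face_splitting_product_hss_tree}, \(\HSStree\) is an HSS tree, so it suffices to exhibit, for each node \(\HSSnode\), a column basis for \(\HSSH(\HSSrowindset_{\HSSnode}, \HSScolindset_{\HSSnode}^{\compl})\) and a row basis for \(\HSSH(\HSSrowindset_{\HSSnode}^{\compl}, \HSScolindset_{\HSSnode})\) that are nested between parent and children.

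First, I split the column complement as in \eqref{eq:typeII_2d_hss_row_block_partition}:
\[
\HSScolindset_{\HSSnode}^{\compl} = (\HSScolindsetx_{\HSSnodex}{}^{\compl}\times\HSScolindsety_{\HSSnodey}) \sqcup (\HSScolindsetx_{\HSSnodex}\times\HSScolindsety_{\HSSnodey}{}^{\compl}) \sqcup (\HSScolindsetx_{\HSSnodex}{}^{\compl}\times\HSScolindsety_{\HSSnodey}{}^{\compl}).
\]
On the blue part, \(\HSSHx(\HSSrowindset_{\HSSnode},\HSScolindsetx_{\HSSnodex}{}^{\compl})\) is the restriction to rows \(\HSSrowindset_{\HSSnode}\subset\HSSrowindset_{\HSSnodex}\) of the HSS block of \(\HSSHx\), so it equals \(\HSSUx_{\HSSnodex}(\HSSrowindset_{\HSSnode},:)\,\Mat{Z}\) for some \(\Mat{Z}\). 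The factor \(\HSSHy(\HSSrowindset_{\HSSnode},\HSScolindsety_{\HSSnodey})\) is the diagonal block \(\HSSHy(\HSSrowindset_{\HSSnodey},\HSScolindsety_{\HSSnodey})\) restricted to rows. Writing it as \(\Mat{I}\cdot\) itself, the mixed-product identity gives a column basis \(\HSSUx_{\HSSnodex}(\HSSrowindset_{\HSSnode},:) \faceprod \HSSHy(\HSSrowindset_{\HSSnode},\HSScolindsety_{\HSSnodey})\) with \(\HSSrankx\HSSdiagsizey\) columns. The green part is symmetric and contributes \(\HSSranky\HSSdiagsizex\) columns, and the yellow part gives \(\HSSUx_{\HSSnodex}\faceprod\HSSUy_{\HSSnodey}\) (rows restricted) with \(\HSSrankx\HSSranky\) columns. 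Concatenating these three yields \(\HSSU_{\HSSnode}\) and the claimed rank bound. The row side is analogous, with the face-splitting product replaced by Kronecker products of \(\HSSVx_{\HSSnodex}\), \(\HSSVy_{\HSSnodey}\) and identity factors. I will also need the row-block view of the diagonal factor: on the rows in \(\HSSrowindset_{\HSSnode}^{\compl}\) whose \(y\)-membership lies in \(\HSSrowindset_{\HSSnodey}\), the factor \(\HSSHy(\cdot,\HSScolindsety_{\HSSnodey})\) is not low rank, so I would keep an identity \(\Mat{I}_{|\HSScolindsety_{\HSSnodey}|}\) there.

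Second, I would verify nestedness, that is, produce \(\HSSR_{\HSSch}\), \(\HSSW_{\HSSch}\) and \(\HSSB\). For a child \(\HSSch=(\HSSchx,\HSSchy)\), the restriction of \(\HSSUx_{\HSSnodex}\) to \(\HSSrowindset_{\HSSch}\) equals \(\HSSUx_{\HSSchx}(\HSSrowindset_{\HSSch},:)\HSSRx_{\HSSchx}\). The restriction of \(\HSSHy(\cdot,\HSScolindsety_{\HSSnodey})\) to rows \(\HSSrowindset_{\HSSch}\) splits column-wise into the diagonal piece \(\HSSHy(\cdot,\HSScolindsety_{\HSSchy})\) and off-diagonal pieces \(\HSSUy_{\HSSchy}\HSSBy\HSSVy{}^{\herm}\). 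Each resulting piece is a face-splitting product of child basis columns times a small matrix, again by the mixed-product identity, so \(\HSSR_{\HSSch}\) is a block matrix assembled from \(\HSSRx\), \(\HSSRy\), \(\HSSBx\), \(\HSSBy\), \(\HSSVx{}^{\herm}\), \(\HSSVy{}^{\herm}\) and identities. Leaves on one side only, as in Definition~\ref{def:face_splitting_product_hss_tree}(2)--(3), are handled by taking the trivial transfer matrix \(\Mat{I}\) on that side. The interaction matrices \(\HSSB_{\HSSch,\HSSchsib}\) come from the same case split, according to whether siblings differ in \(x\), in \(y\), or in both. Leaf generators are \(\HSSD_{\HSSnode} = \HSSDx_{\HSSnodex}(\HSSrowindset_{\HSSnode},:)\faceprod\HSSDy_{\HSSnodey}(\HSSrowindset_{\HSSnode},:)\).

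The main obstacle is this nestedness bookkeeping. The issue is that the basis of \(\HSSnode\) contains the \emph{dense} factor \(\HSSHy(\cdot,\HSScolindsety_{\HSSnodey})\), and this is not a generator of \(\HSSHy\). I first need to check that it decomposes exactly via the children's \(\HSSDy\), \(\HSSUy\), \(\HSSBy\), \(\HSSVy\) in the form required, which it does by Definition~\ref{def:hss_matrix}(2) applied to \(\HSSDy_{\HSSnodey}\). I also need to check that the column counts at level \(\HSSmylevel\) are bounded by \(\HSSdiagsizey_{\HSSmylevel}\), which is where the same-level assumption on the leaves enters.
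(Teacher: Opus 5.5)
Your proposal is correct and is essentially the paper's proof. The bases you arrive at, \(\myU_{\mych} = [\myDx_{\mych}\faceprod\myUy_{\mych},\ \myUx_{\mych}\faceprod\myDy_{\mych},\ \myUx_{\mych}\faceprod\myUy_{\mych}]\) and \(\myV_{\mych} = [\idmat\otimes\myVy_{\mychy},\ \myVx_{\mychx}\otimes\idmat,\ \myVx_{\mychx}\otimes\myVy_{\mychy}]\), together with transfer matrices obtained by splitting the dense factor via Definition~\ref{def:hss_matrix}(2), are exactly the generators constructed there. The differences are only in presentation: you derive the basis from the blue/green/yellow split of the whole HSS block, which gives the rank bound directly, rather than from the sibling blocks inside a parent, and you treat the one-sided-leaf cases of Definition~\ref{def:face_splitting_product_hss_tree} with identity transfer matrices, whereas the paper restricts to full binary trees.
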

\begin{proof}
  Without loss of generality, assume that \(\myTx\) and \(\myTy\) are full binary HSS trees of maximum level \(\HSSmaxlevel\).
  This means that the cases (2) and (3) in Definition~\ref{def:face_splitting_product_hss_tree} will not happen.
  The maximum level of \(\myT\) is \(\HSSmaxlevel\), and every nonleaf node of \(\myT\) has exactly four children.
  General cases can be proved by similar arguments.

  Let \(\{\myDx, \myUx, \myVx, \myRx, \myWx, \myBx\}\) and \(\{\myDy, \myUy, \myVy, \myRy, \myWy, \myBy\}\) be the generators of \(\myHx\) and \(\myHy\), respectively.
  For a node \(\myt = (\mytx, \myty) \in \myT\), we define
  \begin{equation*}
    \myD_{\myt}
    \coloneq \myH(\myJ_{\myt}, \myK_{\myt})
    = \myH\bigl(\myJ_{\myt}, \myKx_{\mytx} \times \myKy_{\myty}\bigr)
    = \myHx(\myJ_{\myt}, \myKx_{\mytx})
    \faceprod \myHy(\myJ_{\myt}, \myKy_{\myty}).
  \end{equation*}
  By \(\myDx_{\mytx} = \myHx(\myJ_{\mytx}, \myKx_{\mytx})\) and \(\myJ_{\myt} = \myJ_{\mytx} \cap \myJ_{\myty}\), \(\myHx(\myJ_{\myt}, \myKx_{\mytx})\) is a submatrix corresponding to some rows of \(\myDx_{\mytx}\).
  We denote \(\myHx(\myJ_{\myt}, \myKx_{\mytx}) = \myDx_{\myt}\).
  Similarly, we denote \(\myHy(\myJ_{\myt}, \myKy_{\myty}) = \myDy_{\myt}\).
  Therefore,
  \begin{equation*}
    \myD_{\myt} = \myDx_{\myt} \faceprod \myDy_{\myt}.
  \end{equation*}

  If \(\myt = (\mytx, \myty)\) is a leaf, then both \(\mytx\) and \(\myty\) are leaves, meaning that both \(\myDx_{\mytx}\) and \(\myDy_{\myty}\) are dense matrices, thus \(\myD_{\myt}\) defined above is a dense matrix as well.
  
  If \(\myt = (\mytx, \myty)\) is a nonleaf, then by our assumption both \(\mytx\) and \(\myty\) are nonleaf and \(\myDx_{\mytx}\) and \(\myDy_{\myty}\) are \(2 \times 2\) block matrices.
  Let \(\ch(\mytx) = \{\mychx_{1}, \mychx_{2}\}\) and \(\ch(\myty) = \{\mychy_{1}, \mychy_{2}\}\), then
  \begin{equation*}
    \ch(\myt) = \{(\mychx_{1}, \mychy_{1}), (\mychx_{1}, \mychy_{2}), (\mychx_{2}, \mychy_{1}), (\mychx_{2}, \mychy_{2})\}.
  \end{equation*}
  The columns of \(\myD_{\myt}\) can be partitioned into \(4\) column blocks corresponding to \(\ch(\myt)\), with the \((\mycolchindx, \mycolchindy)\)-th column block given by
  \begin{equation*}
    \myD_{\myt; :, (\mycolchindx, \mycolchindy)}
    =
    \myDx_{\myt; :, \mycolchindx}
    \faceprod
    \myDy_{\myt; :, \mycolchindy}
    =
    \myHx(\myJ_{\myt}, \myKx_{\mychx_{\mycolchindx}})
    \faceprod
    \myHy(\myJ_{\myt}, \myKy_{\mychy_{\mycolchindy}}).
  \end{equation*}
  We partition the rows of \(\myD_{\myt}\) into \(4\) blocks according to its children, where the \((\myrowchindx, \myrowchindy)\)-th row block of \(\myD_{\myt}\) is
  \begin{equation*}
    \myD_{\myt; (\myrowchindx, \myrowchindy), :}
    = \myDx_{\myt; \myrowchindx, :}
    \faceprod
    \myDy_{\myt; \myrowchindy, :}
    = \myHx(\myJ_{(\mychx_{\myrowchindx}, \mychy_{\myrowchindy})}, \myKx_{\mytx})
    \faceprod
    \myHy(\myJ_{(\mychx_{\myrowchindx}, \mychy_{\myrowchindy})}, \myKy_{\myty}).
  \end{equation*}
  Consequently, the \(((\myrowchindx, \myrowchindy), (\mycolchindx, \mycolchindy))\)-th block of \(\myD_{\myt}\) is
  \begin{equation*}
    \myD_{\myt; (\myrowchindx, \myrowchindy), (\mycolchindx, \mycolchindy)}
    = \myDx_{\myt; \myrowchindx, \mycolchindx}
    \faceprod
    \myDy_{\myt; \myrowchindy, \mycolchindy}
    = \myHx(\myJ_{(\mychx_{\myrowchindx}, \mychy_{\myrowchindy})}, \myKx_{\mychx_{\mycolchindx}})
    \faceprod\
    \myHy(\myJ_{(\mychx_{\myrowchindx}, \mychy_{\myrowchindy})}, \myKy_{\mychy_{\mycolchindy}}).
  \end{equation*}

  In the following, we use the one-to-one mapping of the indices
  \begin{equation*}
  \begin{aligned}
    (1,1) &\leftrightarrow (\mychx_{1},\mychy_{1}) \leftrightarrow 1,
    &\qquad
    (1,2) &\leftrightarrow (\mychx_{1},\mychy_{2}) \leftrightarrow 2,
    \\
    (2,1) &\leftrightarrow (\mychx_{2},\mychy_{1}) \leftrightarrow 3,
    &\qquad
    (2,2) &\leftrightarrow (\mychx_{2},\mychy_{2}) \leftrightarrow 4.
  \end{aligned}
\end{equation*}
  We fix the row child node to be \(\mych_{1} = (\mychx_{1}, \mychy_{1})\) and discuss the block structure of \(\myD_{\myt}\).
  Note that for the off-diagonal blocks, we have
  \begin{equation*}
    \myDx_{\mytx; 1, 2}
    = \myUx_{\mychx_{1}} \myBx_{\mychx_{1}, \mychx_{2}} \myVxh_{\mychx_{2}}, \quad
    \myDy_{\myty; 1, 2}
    = \myUy_{\mychy_{1}} \myBy_{\mychy_{1}, \mychy_{2}} \myVyh_{\mychy_{2}}.
  \end{equation*}
  
  \begin{itemize}
    \item
    Case: The column child node is \(\mych_{1} = (\mychx_{1}, \mychy_{1})\), corresponding to block \((1, 1) = ((1, 1), (1, 1))\).
    
    In this case,
    \begin{equation*}
    \myD_{\myt; 1, 1}
    = \myHx(\myJ_{\mych_{1}}, \myKx_{\mychx_{1}})
    \faceprod \myHy(\myJ_{\mych_{1}}, \myKy_{\mychy_{1}})
    = \myD_{\mych_{1}}.
    \end{equation*}
    \item
    Case: The column child node is \(\mych_{2} = (\mychx_{1}, \mychy_{2})\), corresponding to block \((1, 2) = ((1, 1), (1, 2))\).

    Using the proof of Lemma~\ref{lem:face_splitting_exact_rank}, we have
    \begin{equation} \label{eq:D_lambda_1_2}
    \begin{aligned}
      \myD_{\myt; 1, 2}
      & = \myDx_{\myt; 1, 1}
      \faceprod
      \myDy_{\myt; 1, 2} = \myDx_{\mych_{1}}
      \faceprod \bigl(\myUy_{\mych_{1}} \myBy_{\mychy_{1}, \mychy_{2}} \myVyh_{\mychy_{2}}\bigr) \\
      & = \bigl(
        \myDx_{\mych_{1}}
        \faceprod
        \myUy_{\mych_{1}}\bigr)
      \bigl(
        \idmat
        \otimes
        \myBy_{\mychy_{1}, \mychy_{2}}\bigr)
      \bigl(\idmat \otimes \myVy_{\mychy_{2}}\bigr)^{\herm},
    \end{aligned}
  \end{equation}
    where \(\myUy_{\mych_{1}}\) is the submatrix of \(\myUy_{\mychy_{1}}\) corresponding to the row index set \(\myJ_{\mych_{1}}\).
    \item
    Case: The column child node is \(\mych_{3} = (\mychx_{2}, \mychy_{1})\), corresponding to block \((1, 3) = ((1, 1), (2, 1))\).

    In this case, by a similar argument as in~\eqref{eq:D_lambda_1_2}, we have
    \begin{equation} \label{eq:D_lambda_1_3}
      \myD_{\myt; 1, 3}
      = \bigl(\myUx_{\mych_{1}} \myBx_{\mychx_{1}, \mychx_{2}} \myVxh_{\mychx_{2}}\bigr)
      \faceprod
      \myDy_{\mych_{1}}
      = \bigl(\myUx_{\mych_{1}} \faceprod \myDy_{\mych_{1}}\bigr)
      \bigl(\myBx_{\mychx_{1}, \mychx_{2}} \otimes \idmat\bigr)
      \bigl(\myVx_{\mychx_{2}} \otimes \idmat\bigr)^{\herm}.
    \end{equation}
    \item
    Case: The column child node is \(\mych_{4} = (\mychx_{2}, \mychy_{2})\), corresponding to block \((1, 4) = ((1, 1), (2, 2))\).

    In this case, we have
    \begin{equation} \label{eq:D_lambda_1_4}
      \begin{aligned}
        \myD_{\myt; 1, 4}
        & = \bigl(\myUx_{\mych_{1}} \myBx_{\mychx_{1}, \mychx_{2}} \myVxh_{\mychx_{2}}\bigr)
        \faceprod
        \bigl(\myUy_{\mych_{1}} \myBy_{\mychy_{1}, \mychy_{2}} \myVyh_{\mychy_{2}}\bigr) \\
        & = \bigl(\myUx_{\mych_{1}} \faceprod \myUy_{\mych_{1}}\bigr)
        \bigl(\myBx_{\mychx_{1}, \mychx_{2}} \otimes \myBy_{\mychy_{1}, \mychy_{2}}\bigr)
        \bigl(\myVx_{\mychx_{2}} \otimes \myVy_{\mychy_{2}}\bigr)^{\herm}.
      \end{aligned}
    \end{equation}
  \end{itemize}

  Therefore, for a nonroot node \(\xi = (\alpha, \beta)\), if we define
  \begin{equation}
  \label{eq:U_V_xi}
  \begin{aligned}
    \myU_{\mych}
    &=
    \begin{bmatrix}
      \myDx_{\mych} \faceprod \myUy_{\mych}
      &
      \myUx_{\mych} \faceprod \myDy_{\mych}
      &
      \myUx_{\mych} \faceprod \myUy_{\mych}
    \end{bmatrix},
    \\
    \myV_{\mych}
    &=
    \begin{bmatrix}
      \idmat \otimes \myVy_{\mychy}
      &
      \myVx_{\mychx} \otimes \idmat
      &
      \myVx_{\mychx} \otimes \myVy_{\mychy}
    \end{bmatrix}.
  \end{aligned}
  \end{equation}
  then the off-diagonal blocks~\eqref{eq:D_lambda_1_2}, \eqref{eq:D_lambda_1_3} and \eqref{eq:D_lambda_1_4} can be written as
  \begin{equation*}
    \begin{aligned}
      & \myD_{\myt; 1, 2} = \myU_{\mych_{1};} \myB_{\mych_{1}, \mych_{2}} \myVh_{\mych_{2};},
      \quad
      \myB_{\mych_{1}, \mych_{2}} = \diag\bigl(\idmat \otimes \myBy_{\mychy_{1}, \mychy_{2}}, \Mat{0}, \Mat{0}\bigr) \\
      & \myD_{\myt; 1, 3} = \myU_{\mych_{1};} \myB_{\mych_{1}, \mych_{3}} \myVh_{\mych_{3};}, 
      \quad
      \myB_{\mych_{1}, \mych_{3}} = \diag\bigl(\Mat{0}, \myBx_{\mychx_{1}, \mychx_{2}} \otimes \idmat, \Mat{0}\bigr)\\
      & \myD_{\myt; 1, 4} = \myU_{\mych_{1};} \myB_{\mych_{1}, \mych_{4}} \myVh_{\mych_{4};},
      \quad
      \myB_{\mych_{1}, \mych_{4}} = \diag\bigl(\Mat{0}, \Mat{0}, \myBx_{\mychx_{1}, \mychx_{2}} \otimes \myBy_{\mychy_{1}, \mychy_{2}}\bigr).
    \end{aligned}
  \end{equation*}
  
    It remains to verify the nested basis relations.
  Suppose \(\myt = (\mytx, \myty)\) is a nonroot and nonleaf node.
  We only check the first row block corresponding to \(\mych_{1} = (\mychx_{1}, \mychy_{1})\).
  With the parent column indices ordered by their children, define the row-selection matrices
  \begin{equation*}
    \Mat{E}_{x}
    =
    \begin{bmatrix}
      \idmat & \Mat{0}
    \end{bmatrix}
    \in \complex^{|\myKx_{\mychx_{1}}| \times |\myKx_{\mytx}|},
    \qquad
    \Mat{E}_{y}
    =
    \begin{bmatrix}
      \idmat & \Mat{0}
    \end{bmatrix}
    \in \complex^{|\myKy_{\mychy_{1}}| \times |\myKy_{\myty}|}.
  \end{equation*}
  All identity and zero matrices below have the corresponding dimensions.
  Consider \(\myV_{\myt}\) in~\eqref{eq:U_V_xi} first.
  Using the nested basis relations in the two one-dimensional HSS trees, we have
  \begin{equation*}
    \begin{aligned}
      \myV_{\myt; 1}
      & =
      \begin{bmatrix}
        \Mat{E}_{x} \otimes \myVy_{\myty; 1}
        & \myVx_{\mytx; 1} \otimes \Mat{E}_{y}
        & \myVx_{\mytx; 1} \otimes \myVy_{\myty; 1}
      \end{bmatrix} \\
      & =
      \begin{bmatrix}
        \Mat{E}_{x} \otimes
        \bigl(\myVy_{\mychy_{1}} \myWy_{\mychy_{1}}\bigr)
        &
        \bigl(\myVx_{\mychx_{1}} \myWx_{\mychx_{1}}\bigr)
        \otimes \Mat{E}_{y}
        &
        \bigl(\myVx_{\mychx_{1}} \myWx_{\mychx_{1}}\bigr)
        \otimes
        \bigl(\myVy_{\mychy_{1}} \myWy_{\mychy_{1}}\bigr)
      \end{bmatrix} \\
      & =
      \begin{bmatrix}
        \idmat \otimes \myVy_{\mychy_{1}}
        & \myVx_{\mychx_{1}} \otimes \idmat
        & \myVx_{\mychx_{1}} \otimes \myVy_{\mychy_{1}}
      \end{bmatrix} \\
      &\quad {}\times
      \diag\bigl(
        \Mat{E}_{x} \otimes \myWy_{\mychy_{1}},
        \myWx_{\mychx_{1}} \otimes \Mat{E}_{y},
        \myWx_{\mychx_{1}} \otimes \myWy_{\mychy_{1}}
      \bigr) \\
      & = \myV_{\mych_{1}} \myW_{\mych_{1}},
    \end{aligned}
  \end{equation*}
  where \(\myW_{\mych_{1}}\) is the block diagonal matrix in the above equation.

  Similarly, for \(\myU_{\myt}\) in~\eqref{eq:U_V_xi}, we have
  \begin{equation*}
    \begin{aligned}
      \myU_{\myt; 1}
      & =
      \begin{bmatrix}
        \myDx_{\myt; 1, :} \faceprod \myUy_{\myt; 1}
        & \myUx_{\myt; 1} \faceprod \myDy_{\myt; 1, :}
        & \myUx_{\myt; 1} \faceprod \myUy_{\myt; 1}
      \end{bmatrix}.
    \end{aligned}
  \end{equation*}
  The first column block can be further factorized as
  \begin{equation*}
    \begin{aligned}
      \myU_{\myt; 1}^{[1]}
      & =
      \begin{bmatrix}
        \myDx_{\mych_{1}}
        &
        \myUx_{\mych_{1}}
        \myBx_{\mychx_{1}, \mychx_{2}}
        \myVxh_{\mychx_{2}}
      \end{bmatrix}
      \faceprod
      \bigl(
        \myUy_{\mych_{1}}
        \myRy_{\mychy_{1}}
      \bigr) \\
      & =
      \begin{bmatrix}
        \myDx_{\mych_{1}}
        \faceprod
        \bigl(
          \myUy_{\mych_{1}}
          \myRy_{\mychy_{1}}
        \bigr)
        &
        \bigl(
          \myUx_{\mych_{1}}
          \myBx_{\mychx_{1}, \mychx_{2}}
          \myVxh_{\mychx_{2}}
        \bigr)
        \faceprod
        \bigl(
          \myUy_{\mych_{1}}
          \myRy_{\mychy_{1}}
        \bigr)
      \end{bmatrix} \\
      & =
      \begin{bmatrix}
        \myDx_{\mych_{1}} \faceprod \myUy_{\mych_{1}}
        &
        \myUx_{\mych_{1}} \faceprod \myDy_{\mych_{1}}
        &
        \myUx_{\mych_{1}} \faceprod \myUy_{\mych_{1}}
      \end{bmatrix} \\
      &\quad {}\times
      \begin{bmatrix}
        \idmat \otimes \myRy_{\mychy_{1}}
        & \Mat{0} \\
        \Mat{0}
        & \Mat{0} \\
        \Mat{0}
        &
        \bigl(
          \myBx_{\mychx_{1}, \mychx_{2}}
          \myVxh_{\mychx_{2}}
        \bigr)
        \otimes \myRy_{\mychy_{1}}
      \end{bmatrix} \\
      & =
      \myU_{\mych_{1}}
      \myR_{\mych_{1}}^{[1]},
    \end{aligned}
  \end{equation*}
  where \(\myR_{\mych_{1}}^{[1]}\) is the block matrix in the above equation.
  Similarly, the second and third column blocks of \(\myU_{\myt; 1}\) can be factorized as
  \begin{equation*}
    \myU_{\myt; 1}^{[2]}
    =
    \myU_{\mych_{1}} \myR_{\mych_{1}}^{[2]},
    \qquad
    \myU_{\myt; 1}^{[3]}
    =
    \myU_{\mych_{1}} \myR_{\mych_{1}}^{[3]},
  \end{equation*}
  where
  \begin{equation*}
    \myR_{\mych_{1}}^{[2]}
    =
    \begin{bmatrix}
      \Mat{0} \\
      \myRx_{\mychx_{1}} \otimes \Mat{E}_{y} \\
      \myRx_{\mychx_{1}} \otimes
      \begin{bmatrix}
        \Mat{0}
        &
        \myBy_{\mychy_{1}, \mychy_{2}}
        \myVyh_{\mychy_{2}}
      \end{bmatrix}
    \end{bmatrix},
    \qquad
    \myR_{\mych_{1}}^{[3]}
    =
    \begin{bmatrix}
      \Mat{0} \\
      \Mat{0} \\
      \myRx_{\mychx_{1}} \otimes \myRy_{\mychy_{1}}
    \end{bmatrix}.
  \end{equation*}
  Therefore,
  \begin{equation*}
    \begin{aligned}
      \myU_{\myt; 1}
      & =
      \begin{bmatrix}
        \myU_{\myt; 1}^{[1]}
        & \myU_{\myt; 1}^{[2]}
        & \myU_{\myt; 1}^{[3]}
      \end{bmatrix} \\
      & =
      \myU_{\mych_{1}}
      \begin{bmatrix}
        \myR_{\mych_{1}}^{[1]}
        & \myR_{\mych_{1}}^{[2]}
        & \myR_{\mych_{1}}^{[3]}
      \end{bmatrix}
      =
      \myU_{\mych_{1}} \myR_{\mych_{1}}.
    \end{aligned}
  \end{equation*}
  
  Combining all the above results, we conclude that \(\myH\) is an HSS matrix about \(\tree{T}\).
  The results on the rank of HSS blocks can be easily verified by the construction process.
\end{proof}

Since the proof of Theorem~\ref{thm:face_splitting_product_hss_matrix} is constructive, it can be used to compute the HSS representation of \(\HSSH\) from the HSS representations of \(\HSSHx\) and \(\HSSHy\).
We will use it as part of our algorithm for constructing the HSS representation of the transformed 2D NUDFT matrix in Section~\ref{subsec:typeII_2d_nudft_algorithm}.
For \(\numcolx = \numcoly = \numcoldir\), assume that the two 1D HSS approximations have rank bounds
\(\HSSrank_{\HSSmylevel}
= \bigO\bigl(\mylog{(1 / \ranktol)} \mylog{(\numcoldir / 2^{\HSSmylevel})}\bigr)\)
at level \(\HSSmylevel\).
Since each diagonal block at this level has
\(\bigO(\numcoldir / 2^{\HSSmylevel})\) columns,
Theorem~\ref{thm:face_splitting_product_hss_matrix} bounds the ranks in their face-splitting product by
\(\bigO\bigl(\HSSrank_{\HSSmylevel} \numcoldir / 2^{\HSSmylevel} + \HSSrank_{\HSSmylevel}^{2}\bigr)\).
For fixed \(\ranktol\), this gives the level-dependent bound
\(\bigO\bigl((\numcoldir / 2^{\HSSmylevel}) \mylog{(\numcoldir / 2^{\HSSmylevel})}\bigr)\)
and an overall HSS rank bound of
\(\bigO(\sqrt{\numcol} \mylog{\numcol})\).

\subsection{Error estimate of the HSS approximation} \label{subsec:error_estimate_hss_approximation}

We next bound the error in approximating \(\tnudftmat\) by the face-splitting product of two 1D HSS approximations.
The argument uses the following equal-row-norm property, which follows from the constant magnitudes of the NUDFT entries and the unitarity of the DFT matrices up to scaling.

\begin{lemma}[Equal row norm property] \label{lem:typeII_2d_same_row_norm}
  Consider the 2D type-II NUDFT matrix \(\nudftmat = \nudftmatx \faceprod \nudftmaty\) defined by~\eqref{eq:typeII_2d_nudft_matrix}, where \(\nudftmatx\) and \(\nudftmaty\) are the type-II NUDFT matrices in \(\dirx\)- and \(\diry\)- direction respectively.
  Let \(\dftmat = \dftmatx \otimes \dftmaty\) be the DFT matrix in 2D and \(\tnudftmat = \nudftmat \dftmatinv = \tnudftmatx \faceprod \tnudftmaty\) where \(\tnudftmatx = \nudftmatx \dftmatxinv\) and \(\tnudftmaty = \nudftmaty \dftmatyinv\).
  Then every row of \(\tnudftmatx = \nudftmatx \dftmatxinv\) has the same norm, denoted by \(\nudftrownormx\) and every row of \(\tnudftmaty = \nudftmaty \dftmatyinv\) has the same norm, denoted by \(\nudftrownormy\).
  Consequently, every row of \(\tnudftmat = \nudftmat \dftmatinv = \tnudftmatx \faceprod \tnudftmaty\) has the same norm \(\nudftrownormx \nudftrownormy\).
  In particular, \(\|\tnudftmatx\|_{\fro} = \sqrt{\numrow} \nudftrownormx\), \(\|\tnudftmaty\|_{\fro} = \sqrt{\numrow} \nudftrownormy\) and \(\|\tnudftmat\|_{\fro} = \sqrt{\numrow} \nudftrownormx \nudftrownormy\).
\end{lemma}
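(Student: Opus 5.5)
The plan is to prove the 1D statement first and then lift it to 2D through the face-splitting structure.

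For the 1D factor \(\tnudftmatx = \nudftmatx \dftmatxinv\), I will use that the DFT matrix is a scaled unitary matrix: \(\dftmatx (\dftmatx)^{\herm} = \numcolx \idmat\), so \(\dftmatxinv = (\dftmatx)^{\herm} / \numcolx\). The \(\rowind\)-th row of \(\tnudftmatx\) is then \(\nudftmatx(\rowind, :) (\dftmatx)^{\herm} / \numcolx\). Since \((\dftmatx)^{\herm}/\sqrt{\numcolx}\) is unitary, its \(2\)-norm is
\[
\frac{1}{\sqrt{\numcolx}} \bigl\|\nudftmatx(\rowind, :)\bigr\|_{2}.
\]
Every entry of \(\nudftmatx\) has modulus one, so \(\|\nudftmatx(\rowind, :)\|_{2} = \sqrt{\numcolx}\). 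Hence every row of \(\tnudftmatx\) has norm \(\nudftrownormx = 1\). The same argument gives \(\nudftrownormy = 1\) for \(\tnudftmaty\). I will state the result with the general symbols \(\nudftrownormx\) and \(\nudftrownormy\), but observe that both equal one.

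For the 2D claim, I first need the identity \(\tnudftmat = \tnudftmatx \faceprod \tnudftmaty\). Although the paper asserts it, I will confirm it via the mixed-product rule for face-splitting products,
\[
(\Mat{A}_{1} \faceprod \Mat{A}_{2})(\Mat{B}_{1} \otimes \Mat{B}_{2}) = (\Mat{A}_{1}\Mat{B}_{1}) \faceprod (\Mat{A}_{2}\Mat{B}_{2}),
\]
which holds row by row from the Kronecker mixed-product property. Applying it with \(\dftmatinv = \dftmatxinv \otimes \dftmatyinv\) gives the identity.

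Each row of \(\tnudftmat\) is then \(\tnudftmatx(\rowind, :) \otimes \tnudftmaty(\rowind, :)\). The \(2\)-norm of a Kronecker product of vectors is the product of their norms, so every row of \(\tnudftmat\) has norm \(\nudftrownormx \nudftrownormy\).

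The Frobenius-norm statements follow by summing squared row norms over the \(\numrow\) rows. Each matrix has \(\numrow\) rows of equal norm, so its Frobenius norm is \(\sqrt{\numrow}\) times the common row norm.

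None of these steps is a real obstacle. The only points requiring care are the normalization convention of the DFT, which must make \(\dftmatinv = \dftmat^{\herm}/\numcoldir\), and the check that the lexicographic column ordering \((\colindx, \colindy) \leftrightarrow \colindy + \colindx \numcoly\) matches the Kronecker ordering in \(\dftmatx \otimes \dftmaty\). Both are routine.
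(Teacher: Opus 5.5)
Your proposal is correct and takes the same route the paper indicates: the paper justifies the lemma only by the constant modulus of the NUDFT entries and the scaled unitarity of the DFT matrices, which is your 1D argument, and your passage to 2D (the mixed-product rule giving \(\tnudftmat = \tnudftmatx \faceprod \tnudftmaty\), then multiplicativity of vector norms under Kronecker products) fills in the rest. Your extra observation that \(\nudftrownormx = \nudftrownormy = 1\), so that \(\|\tnudftmat\|_{\fro} = \sqrt{\numrow}\), is also correct and slightly sharper than the lemma as stated.
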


\begin{theorem} \label{thm:nudft_hss_approximation}
  Using the same notations as in Lemma~\ref{lem:typeII_2d_same_row_norm}, for a given \(0 < \ranktol < 1\), suppose \(\nudfthssx\) and \(\nudfthssy\) are two HSS approximations of \(\tnudftmatx\) and \(\tnudftmaty\) such that \(\|\tnudftmatx - \nudfthssx\|_{\fro} \leq \ranktol \|\tnudftmatx\|_{\fro}\) and \(\|\tnudftmaty - \nudfthssy\|_{\fro} \leq \ranktol \|\tnudftmaty\|_{\fro}\).
  Assume that there exists a constant \(\nudftconst\) such that \(\max_{\rowind} \{\|\nudfthssx(\rowind, :)\|_{2}\} \leq \nudftconst \max_{\rowind} \{\|\tnudftmatx(\rowind, :)\|_{2}\}\).
  Then \(\nudfthss = \nudfthssx \faceprod \nudfthssy\) is an HSS approximation of \(\tnudftmat = \tnudftmatx \faceprod \tnudftmaty\) and \(\|\nudfthss - \tnudftmat\|_{\fro} \leq (1 + \nudftconst) \ranktol \|\tnudftmat\|_{\fro}\).
\end{theorem}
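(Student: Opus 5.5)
The plan has two parts. The HSS claim follows directly from Theorem~\ref{thm:face_splitting_product_hss_matrix}. Since \(\nudfthssx\) and \(\nudfthssy\) are HSS matrices on \(\HSStreex\) and \(\HSStreey\), their face-splitting product \(\nudfthss = \nudfthssx \faceprod \nudfthssy\) is an HSS matrix on \(\HSStree = \HSStreex \faceprod \HSStreey\). The real content is the Frobenius error bound, which we obtain by a telescoping split and Lemma~\ref{lem:faceprod_F_norm}.

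We write
\begin{equation*}
  \nudfthss - \tnudftmat
  = \bigl(\nudfthssx - \tnudftmatx\bigr) \faceprod \tnudftmaty
  + \nudfthssx \faceprod \bigl(\nudfthssy - \tnudftmaty\bigr).
\end{equation*}
This holds because the face-splitting product is bilinear: each row is a Kronecker product, which is bilinear. The order of the split is deliberate. The uncontrolled factor \(\nudfthssx\) appears only in the second term, where the hypothesis involving \(\nudftconst\) applies.

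For the first term, we use the second bound in Lemma~\ref{lem:faceprod_F_norm}, placing the max row norm on \(\tnudftmaty\). By Lemma~\ref{lem:typeII_2d_same_row_norm}, \(\max_{\rowind}\|\tnudftmaty(\rowind,:)\|_2 = \nudftrownormy\), so the term is at most
\begin{equation*}
  \nudftrownormy \, \ranktol \, \|\tnudftmatx\|_{\fro}
  = \ranktol \sqrt{\numrow}\, \nudftrownormx \nudftrownormy
  = \ranktol \|\tnudftmat\|_{\fro}.
\end{equation*}
For the second term, we use the first bound, placing the max row norm on \(\nudfthssx\). By the assumption and Lemma~\ref{lem:typeII_2d_same_row_norm}, this row norm is at most \(\nudftconst \nudftrownormx\). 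The term is therefore at most
\begin{equation*}
  \nudftconst \nudftrownormx \, \ranktol \, \|\tnudftmaty\|_{\fro}
  = \nudftconst \, \ranktol \sqrt{\numrow}\, \nudftrownormx \nudftrownormy
  = \nudftconst \, \ranktol \|\tnudftmat\|_{\fro}.
\end{equation*}
Adding the two bounds by the triangle inequality gives \((1+\nudftconst)\ranktol\|\tnudftmat\|_{\fro}\).

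The main obstacle is the equal-row-norm identity from Lemma~\ref{lem:typeII_2d_same_row_norm}. It is what converts max row norms into Frobenius norms with exactly the factor \(\sqrt{\numrow}\). Without it, the product \(\max_{\rowind}\|\cdot\| \cdot \|\cdot\|_{\fro}\) would not recombine into \(\|\tnudftmat\|_{\fro}\).

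The other point requiring care is checking that Lemma~\ref{lem:faceprod_F_norm} is applied with the max on the correct factor in each term. Otherwise a row norm of an error matrix would appear, and it is not controlled by \(\ranktol\).
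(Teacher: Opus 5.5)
Your proposal is correct and follows the paper's proof essentially step for step. It uses the same bilinear split, Lemma~\ref{lem:faceprod_F_norm} with the row-norm maximum placed on \(\tnudftmaty\) and then on \(\nudfthssx\), and Lemma~\ref{lem:typeII_2d_same_row_norm} to recombine into \((1+\nudftconst)\ranktol\|\tnudftmat\|_{\fro}\). Your explicit appeal to Theorem~\ref{thm:face_splitting_product_hss_matrix} for the HSS claim is a small addition that the paper leaves implicit.
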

\begin{proof}
  From Lemma~\ref{lem:faceprod_F_norm} and Lemma~\ref{lem:typeII_2d_same_row_norm}, we have
  \begin{equation*}
    \begin{aligned}
       \|\tnudftmat - \nudfthss\|_{\fro}
        & = \|(\tnudftmatx - \nudfthssx) \faceprod \tnudftmaty
        + \nudfthssx \faceprod (\tnudftmaty - \nudfthssy)\|_{\fro} \\
        & \leq \|(\tnudftmatx - \nudfthssx) \faceprod \tnudftmaty\|_{\fro}
        + \| \nudfthssx \faceprod (\tnudftmaty - \nudfthssy)\|_{\fro} \\
        & \leq \ranktol \|\tnudftmatx\|_{\fro} \nudftrownormy + \nudftconst \nudftrownormx \ranktol \|\tnudftmaty\|_{\fro}\\
        & = \ranktol \sqrt{\numrow} \nudftrownormx \nudftrownormy + \nudftconst \ranktol \sqrt{\numrow} \nudftrownormx \nudftrownormy = (1 + \nudftconst) \ranktol \|\tnudftmat\|_{\fro}.
    \end{aligned}
  \end{equation*}
  This completes the proof.
\end{proof}

The Frobenius-norm assumption implies that
\(\nudftconst = 1 + \sqrt{\numrow} \ranktol\)
is admissible, giving the relative error bound
\((2 + \sqrt{\numrow} \ranktol)\ranktol\).
Therefore, one can expect the approximation error in Theorem~\ref{thm:nudft_hss_approximation} is not large when \(\sqrt{\numrow} \ranktol\) is small.
However, this estimate is quite pessimistic.
If every row of \(\tnudftmatx - \nudfthssx\) has approximately the same norm~(and this could be the case in practice), then one could expect that \(\nudftconst\) is a small constant independent of \(\numrow\) and \(\numcol\), meaning that the approximation error in Theorem~\ref{thm:nudft_hss_approximation} is small.

\section{A Direct Solver for the Inverse NUDFT} \label{sec:typeII_2d_nudft_solver}

In this section, we present a superfast solver for the inverse NUDFT problem~\eqref{eq:typeII_2d_nudft} based on the HSS approximation of \(\tnudftmat\).
Section~\ref{subsec:typeII_2d_nudft_algorithm} describes the algorithm in detail and Section~\ref{subsec:complexity_analysis} provides a complexity analysis of it.

\subsection{An Algorithm for the Direct Inversion of Type-II 2D NUDFT} \label{subsec:typeII_2d_nudft_algorithm}

Based on the discussion in the previous sections, we design an algorithm for solving the 2D type-II inverse NUDFT using an HSS approximation of \(\tnudftmat\).
The algorithm consists of two stages.

The \emph{offline} stage constructs \(\nudfthss\) from the face-splitting product of two 1D HSS approximations, recompresses it as described in Section~\ref{subsec:hss_recompression}, and computes its URV factorization.
The factors can be reused for different target vectors with the same sample points, frequency grid, and approximation tolerance.
Writing
\begin{equation*}
  \nudftmat = \tnudftmat \dftmat
  \approx \nudftmat_{\fast} \coloneq \nudfthss \dftmat,
\end{equation*}
the \emph{online} stage computes
\begin{equation*}
  \coeffvec = \nudftmat_{\fast}^{\pinv} \targetvvec
  = \dftmatinv \nudfthss^{\pinv} \targetvvec
\end{equation*}
using the URV solver followed by a two-dimensional iFFT.
Algorithm~\ref{alg:typeII_2d_nudft_solver} summarizes these steps.

The direct solver can also be used as a preconditioner for lsqr.
The iterations solve the original NUDFT least-squares problem, using the NUFFT to apply \(\nudftmat\) and its adjoint.
The preconditioner is applied by the URV solver followed by a two-dimensional iFFT.
Its adjoint,
\(\nudftmat_{\fast}^{\pinv, \herm}
= \nudfthss^{\pinv \herm} \dftmat / \numcol\),
is applied by a two-dimensional FFT followed by the adjoint URV solver.

\begin{algorithm}[htbp]
  \caption{A direct solver for the type-II 2D NUDFT problem} \label{alg:typeII_2d_nudft_solver}
  \begin{algorithmic}[1]
    \REQUIRE{Sample points \(\{(\ptx_{\rowind}, \pty_{\rowind})\}_{\rowind = 0}^{\numrow - 1}\) and number of frequencies \(\numcolx\) and \(\numcoly\), target values \(\{\targetv_{\rowind}\}_{\rowind = 0}^{\numrow - 1}\), accuracy parameter \(\ranktol\).}
    \ENSURE{Coefficients \(\coeffvec\).}
    \STAGE{Offline stage: Construction.}
    \STATE{Construct the HSS approximation \(\nudfthssx\) of \(\tnudftmatx\) and \(\nudfthssy\) of \(\tnudftmaty\) using the fADI-based method with accuracy parameter \(\ranktol / 10 \) in Section~\ref{subsec:inverse_typeII_1d_nudft}.}
    \STATE{Compute the HSS approximation \(\nudfthss = \nudfthssx \faceprod \nudfthssy\) of \(\tnudftmat = \nudftmat \dftmatinv\) using the proof of Theorem~\ref{thm:face_splitting_product_hss_matrix}.}
    \STATE{Recompress the HSS approximation \(\nudfthss\) by the algorithm in Section~\ref{subsec:hss_recompression} with accuracy parameter \(\ranktol\).}
    \STAGE{Offline stage: Factorization.}
    \STATE{Compute the URV factorization of \(\nudfthss\).}
    \STAGE{Online stage: Solving.}
    \STATE{Compute \(\coeffvec = \dftmatinv \nudfthss^{\pinv} \targetvvec\) by the URV solution and two-dimensional iFFT.}
  \end{algorithmic}
\end{algorithm}

\subsection{Complexity Analysis} \label{subsec:complexity_analysis}

We analyze the complexity of Algorithm~\ref{alg:typeII_2d_nudft_solver} under a simplified setting.
Suppose \(\HSStreex\) and \(\HSStreey\) are both full binary HSS trees with maximum level \(\HSSmaxlevel\) constructed by bisecting the column index sets and all leaf nodes have exact column size \(\numcoldir_{\HSSmaxlevel}\), where \(\numcoldir_{\HSSmaxlevel}\) is a small constant independent of \(\numcolx\) and \(\numcoly\).
That is, \(\numcolx = \numcoly = \numcoldir = 2^{\HSSmaxlevel} \numcoldir_{\HSSmaxlevel}\) and \(\numcol = 4^{\HSSmaxlevel} \numcol_{\HSSmaxlevel}\) where \(\numcol_{\HSSmaxlevel} = \numcoldir_{\HSSmaxlevel}^{2}\) is fixed independently of \(\numrow\) and \(\numcol\).
Let \(\HSStree = \HSStreex \faceprod \HSStreey\) and assume further that each leaf node contains \(\numrow_{\HSSmaxlevel}\) sample points, so that \(\numrow = 4^{\HSSmaxlevel} \numrow_{\HSSmaxlevel}\).
Then \(\numrow_{\HSSmaxlevel} = (\numrow / \numcol) \numcol_{\HSSmaxlevel}\), while \(\numrow_{\HSSmaxlevel}\) need not remain bounded.
At level \(\HSSmylevel\), the product tree has \(4^{\HSSmylevel}\) nodes; let \(\HSSrank_{\HSSmylevel}\) bound the ranks of their HSS blocks.
For fixed \(\ranktol\), write \(\numcoldir_{\HSSmylevel} = \numcoldir / 2^{\HSSmylevel}\) and assume
\(\HSSrank_{\HSSmylevel} = \bigO(\numcoldir_{\HSSmylevel} \mylog{\numcoldir_{\HSSmylevel}})\),
as motivated in Sections~\ref{subsec:kernel_matrix_perspective} and~\ref{subsec:face_splitting_product_hss_tree_matrix}.

Consider the HSS construction of Algorithm~\ref{alg:typeII_2d_nudft_solver}.
First, the two one-dimensional HSS matrices \(\nudfthssx\) and \(\nudfthssy\) both have size \(\numrow \times \numcoldir\).
Applying the fADI-based construction in~\cite[Section~3.5]{Wilber_Epperly_Barnett_2025}, as reviewed in Section~\ref{subsec:inverse_typeII_1d_nudft}, gives a total cost of \(\bigO\bigl((\numrow + \numcoldir) \mylog[2]{\numcoldir}\bigr) = \bigO\bigl((\numrow + \sqrt{\numcol}) \mylog[2]{\numcol}\bigr)\).

Next, the face-splitting product is formed by assembling the generators in the proof of Theorem~\ref{thm:face_splitting_product_hss_matrix}.
With the corresponding one-dimensional rank model \(\bigO\bigl(\mylog{\numcoldir_{\HSSmylevel}}\bigr)\), the column sizes of the resulting basis matrices are bounded by \(\HSSrank_{\HSSmylevel} = \bigO\bigl(\numcoldir_{\HSSmylevel} \mylog{\numcoldir_{\HSSmylevel}}\bigr)\) before recompression.
The interaction and transfer matrices are assembled directly from Kronecker products, requiring \(\bigO(\HSSrank_{\HSSmylevel}^{2})\) operations per node.
The sample points are grouped once according to their pairs of one-dimensional leaves, and the product tree and its index sets can be organized in \(\bigO(\numrow + \numcol)\) operations.
Summing over the \(4^{\HSSmylevel}\) nodes at each level gives a total face-splitting cost of \(\bigO\bigl(\numrow + \numcol \mylog[3]{\numcol}\bigr)\).

The recompression cost follows the analysis in~\cite[Section~5]{Xia_2012}.
The main differences are that our HSS tree is a quad tree and the generator sizes depend on the level.
The QR factorizations, SVDs and basis transformations require \(\bigO(\HSSrank_{\HSSmylevel}^{3})\) operations per nonleaf node.
Since the leaf basis matrices have a bounded number of columns, their thin QR factorizations and basis updates cost \(\bigO(\numrow_{\HSSmaxlevel} + 1)\) operations per leaf, while the SVDs act only on small weight matrices.
Summing over all \(4^{\HSSmaxlevel}\) leaves gives a total leaf cost of \(\bigO(\numrow + \numcol)\).
Consequently, the recompression complexity is \(\bigO\bigl(\numrow + \numcol^{3 / 2} \mylog[3]{\numcol}\bigr)\).

Similarly, the URV factorization and solution costs follow from~\cite{Xi_Xia_Cauley_Balakrishnan_2014, Wilber_Epperly_Barnett_2025}, with the binary tree replaced by a quad tree and the fixed rank replaced by the level-dependent rank \(\HSSrank_{\HSSmylevel}\).
The thin QR factorization in the size reduction step keeps the leaf cost linear in the number of sample points.
The remaining factorization and solution operations cost \(\bigO(\HSSrank_{\HSSmylevel}^{3})\) and \(\bigO(\HSSrank_{\HSSmylevel}^{2})\) per node, respectively.
Thus, the URV factorization costs \(\factorcomplexity\), and the URV solution for one right-hand side costs \(\solvecomplexity\).
Applying the adjoint of the URV solution operator has the same arithmetic complexity and uses the same factors.

Combining the four steps gives the bound \(\offlinecomplexity\) for the offline stage.
The two-dimensional iFFT adds \(\bigO(\numcol \mylog{\numcol})\) operations to each solve, so the online arithmetic complexity remains \(\onlinecomplexity\).

\section{Numerical Results} \label{sec:numerical_results}

In this section, we evaluate the computational cost and reconstruction accuracy of the proposed direct solver for the 2D type-II NUDFT.
We also examine its use as a preconditioner for lsqr.
Two types of nonuniform grids are considered: a random grid and a modified polar grid~\cite{Fenn_Kunis_Potts_2007}.
The random grid consists of \(\numrow\) points sampled independently and uniformly from \([0, 1)^{2}\).
The polar grid consists of the center point \((1/2, 1/2)\) together with points of the form
\begin{equation*}
  \begin{gathered}
    x_{(p, q)} = \frac{1}{2} + r_{p} \cos (2 \pi t_{q}), \quad
    y_{(p, q)} = \frac{1}{2} + r_{p} \sin (2 \pi t_{q}), \\
    r_{p} = \frac{\sqrt{2}}{2} \frac{p}{n_{r}}, \quad
    t_{q} = \frac{q}{n_{t}}, \quad
    p = 1, \ldots, n_{r} - 1, \quad
    q = 0, 1, \ldots, n_{t} - 1,
  \end{gathered}
\end{equation*}
where points outside \([0, 1)^{2}\) are excluded.
Examples of the two grids are shown in Figure~\ref{fig:nonuniform_grids}.
The polar sampling geometry is motivated by Fourier reconstruction in CT, as discussed in Section~\ref{sec:introduction}.
Together with the random grid, it allows us to examine the proposed solver for different distributions of nonuniform Fourier samples.

\begin{figure}[htbp]
    \centering
    \begin{subfigure}{0.3\textwidth}
        \centering
        \includegraphics[width=\linewidth]{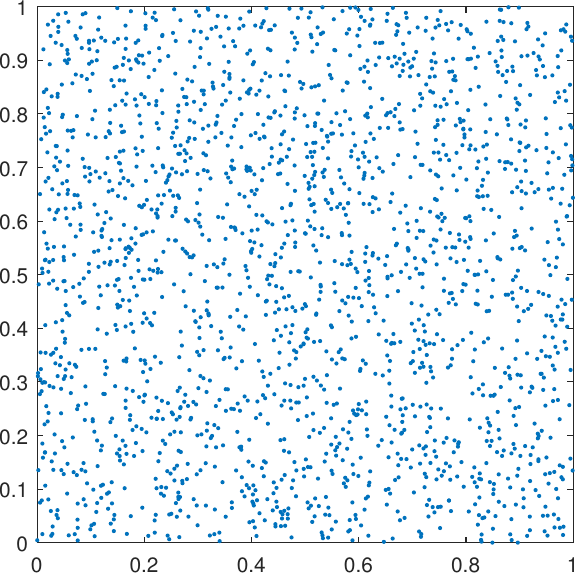}
    \end{subfigure}
    \hspace{0.01\textwidth}
    \begin{subfigure}{0.3\textwidth}
        \centering
        \includegraphics[width=\linewidth]{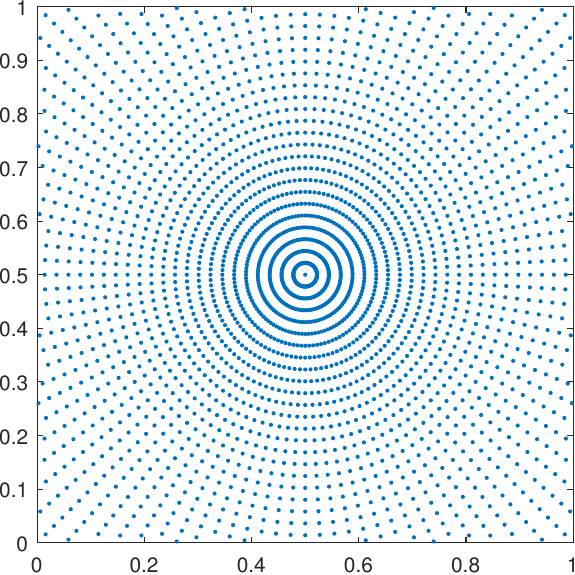}
    \end{subfigure}
    \caption{Examples of nonuniform grids with \(\numcol = 32^{2}\).
    Left: a random grid.
    Right: a polar grid.}
    \label{fig:nonuniform_grids}
\end{figure}

We let \(\numcoldir\) range over \(\{32, 64, 128, 256, 512\}\) and set
\(\numcolx = \numcoly = \numcoldir\), so that \(\numcol = \numcoldir^{2}\).
For the random grid, we use \(\numrow = 1.5 \numcol\).
For the polar grid, we set \(n_{r} = \numcoldir\) and
\(n_{t} = \lceil \beta \numcoldir \mylog_{2}{\numcoldir} \rceil\) with \(\beta = 0.6\).
For the resulting grids, the number of sample points is approximately
\(\numrow = 0.47 \numcol \mylog_{4}{\numcol}\).

Following the experimental setup in~\cite[Example~5.4]{Kircheis_Potts_2023}, we use the modified Shepp-Logan phantom as the exact solution.
Its pixel values on a \(\numcoldir \times \numcoldir\) grid form the coefficient vector \(\coeffvec_{\exact}\).
The target values \(\targetvvec\) are generated by applying the type-II 2D NUFFT to \(\coeffvec_{\exact}\), with no additional noise.
This setup isolates the error and computational cost of solving the discrete Fourier inverse problem.
In addition to reporting residuals and errors, we display the reconstructed phantom and a horizontal row profile for \(\numcoldir = 512\).
The images and profiles show the real parts of the reconstructed coefficients.
All phantom images use the same grayscale range \([0, 1]\).
The profiles correspond to the \(416\)th row of the coefficient array, which intersects three small interior features of the phantom.

The HSS tolerance \(\ranktol\) is set to \(10^{-2}\) or \(10^{-4}\).
The leaf column size of each one-dimensional HSS tree is fixed at \(16\), giving a leaf column size of \(256\) for the product tree.
For the iterative solution, we use MATLAB's \texttt{lsqr}~\cite{Paige_Saunders_1982} with a tolerance of \(10^{-12}\) and a maximum of \(500\) iterations.
We use lsqr to denote the unpreconditioned method and plsqr to denote the method preconditioned by the proposed direct solver.
All algorithms are implemented in MATLAB R2023b, and the FINUFFT library~\cite{Barnett_Magland_Klinteberg_2019} is used for the NUFFT evaluations.
The experiments are carried out on a server with two Intel Gold 6226R CPUs at 2.90 GHz and 1000.6 GB of RAM.

We report the following quantities.
The HSS rank after recompression is denoted by \(\HSSrank_{\h}\).
The construction time \(t_{\construct}\) includes the construction of the two one-dimensional HSS matrices, the face-splitting product, and recompression.
The factorization time \(t_{\factor}\) measures the URV factorization, and the solution time \(t_{\solve}\) measures one application of the URV solver followed by the two-dimensional iFFT.
For a computed solution \(\coeffvec\), the relative residual and relative error are
\begin{equation*}
    r_{\solve}
    = \frac{\|\nudftmat \coeffvec - \targetvvec\|_{2}}
           {\|\targetvvec\|_{2}},
    \quad
    e_{\solve}
    = \frac{\|\coeffvec - \coeffvec_{\exact}\|_{2}}
           {\|\coeffvec_{\exact}\|_{2}}.
\end{equation*}
These quantities are evaluated using the full computed solution before taking its real part for visualization.
For iterative solvers, \(t_{\pre} = t_{\construct} + t_{\factor}\) denotes the preprocessing time, \(t_{\iter}\) denotes the time spent in the iterative solution, and \(n_{\iter}\) denotes the number of iterations.
For a single right-hand side, the total time is \(t_{\construct} + t_{\factor} + t_{\solve}\) for the direct solver and \(t_{\pre} + t_{\iter}\) for plsqr.
When the sampling points, coefficient grid, and approximation tolerance remain fixed, the offline factors can be reused for different target vectors.
For \(n_{\mathrm{rhs}}\) right-hand sides, the total direct solution time is therefore approximately \(t_{\construct} + t_{\factor} + n_{\mathrm{rhs}} t_{\solve}\).
The practical benefit of this reuse depends on the number of right-hand sides and the solution cost of competing methods at a comparable accuracy.
All times in the tables are reported in seconds.

\subsection{Random Grid}

Figure~\ref{fig:rand_rank_construct_factor_solve_alpha_1.5} shows the HSS rank and the construction, factorization, and solution times for random grids with \(\numrow = 1.5 \numcol\).
The rank growth agrees well with the estimate
\(\bigO\bigl(\sqrt{\numcol} \mylog{\numcol}\bigr)\).
For this sampling ratio, the complexity bounds in Section~\ref{subsec:complexity_analysis} reduce to
\(\bigO\bigl(\numcol^{3 / 2} \mylog[3]{\numcol}\bigr)\) for construction and factorization, and
\(\bigO\bigl(\numcol \mylog[3]{\numcol}\bigr)\) for solution.
The dashed curves indicate these reference scalings.
The measured solution times are much smaller than the construction and factorization times, consistent with the complexity estimates in Section~\ref{subsec:complexity_analysis}.

\begin{figure}[tbhp]
    \centering
    \begin{subfigure}{0.37\textwidth}
        \centering
        \includegraphics[width=\linewidth]{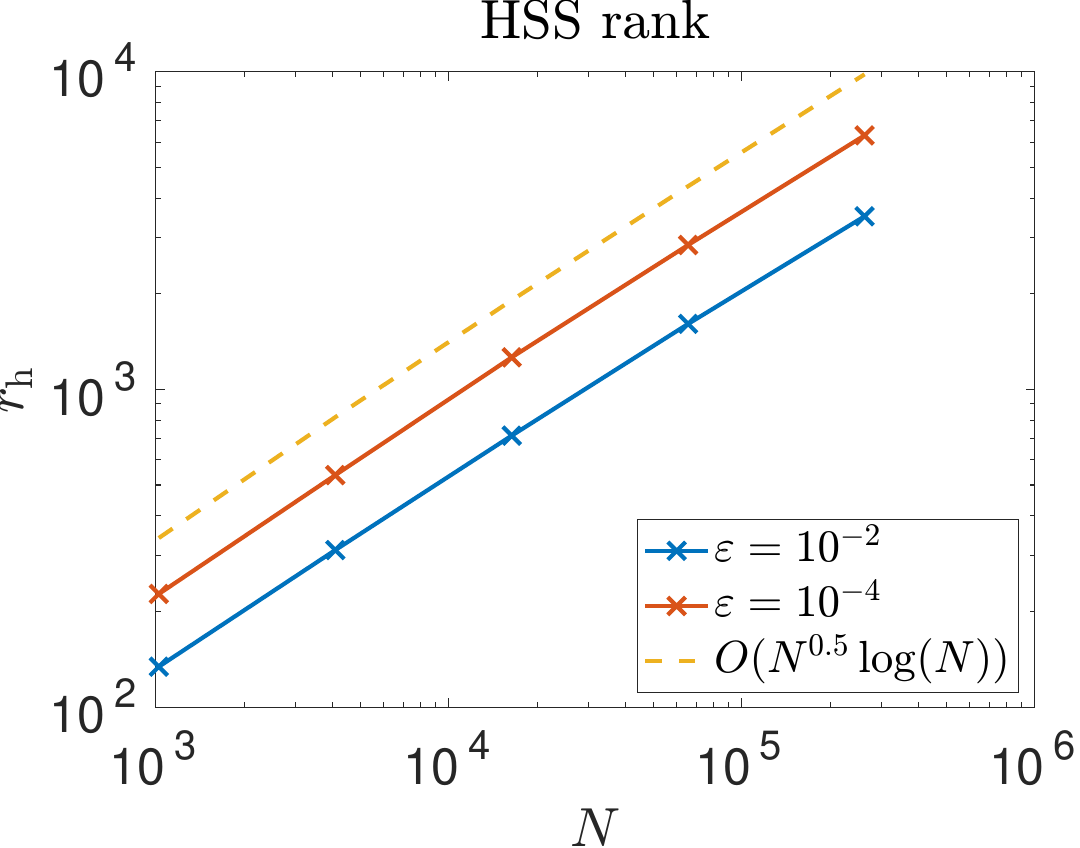}
    \end{subfigure}
    \hspace{0.01\textwidth}
    \begin{subfigure}{0.37\textwidth}
        \centering
        \includegraphics[width=\linewidth]{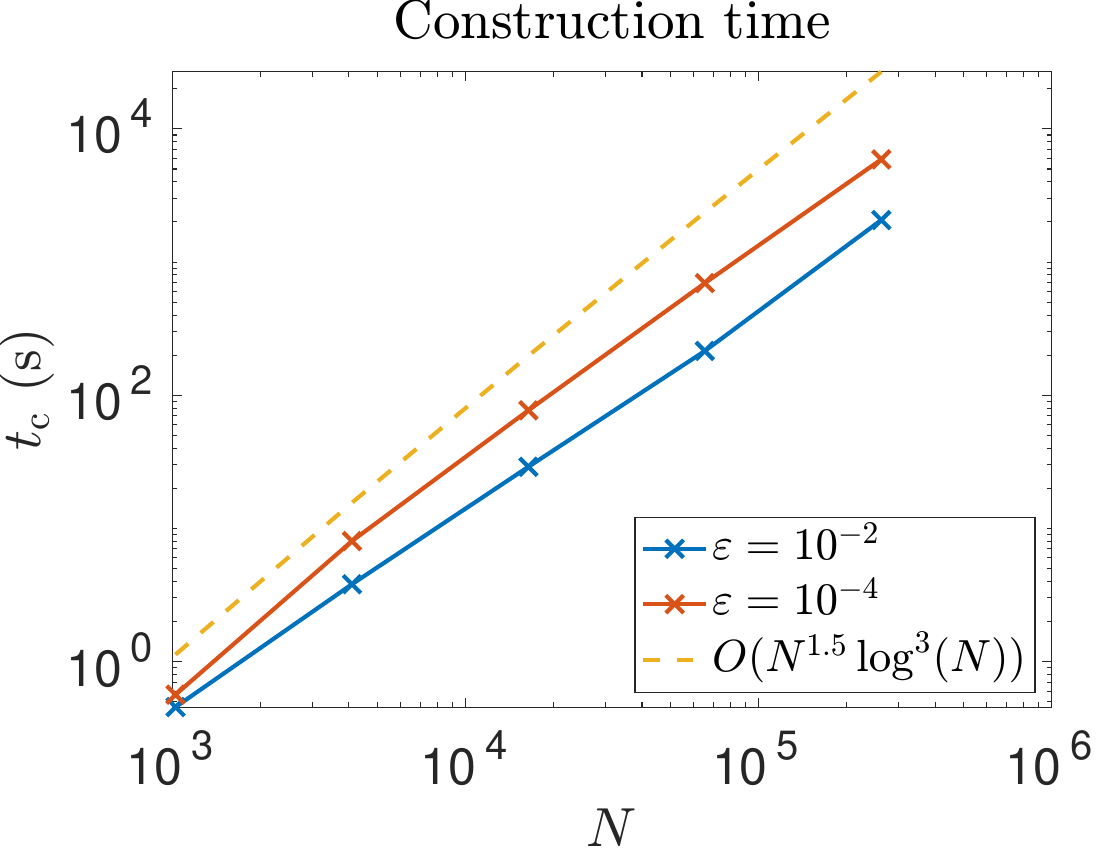}
    \end{subfigure}
    \begin{subfigure}{0.37\textwidth}
        \centering
        \includegraphics[width=\linewidth]{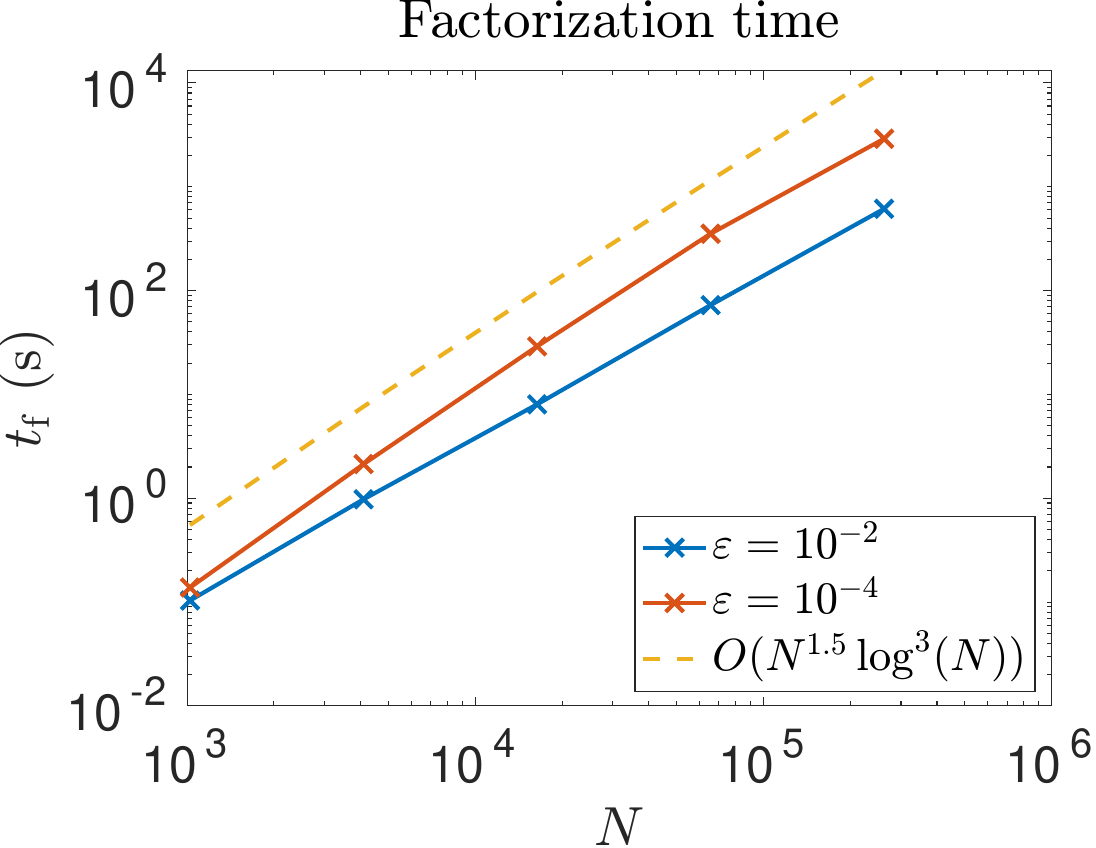}
    \end{subfigure}
    \hspace{0.01\textwidth}
    \begin{subfigure}{0.37\textwidth}
        \centering
        \includegraphics[width=\linewidth]{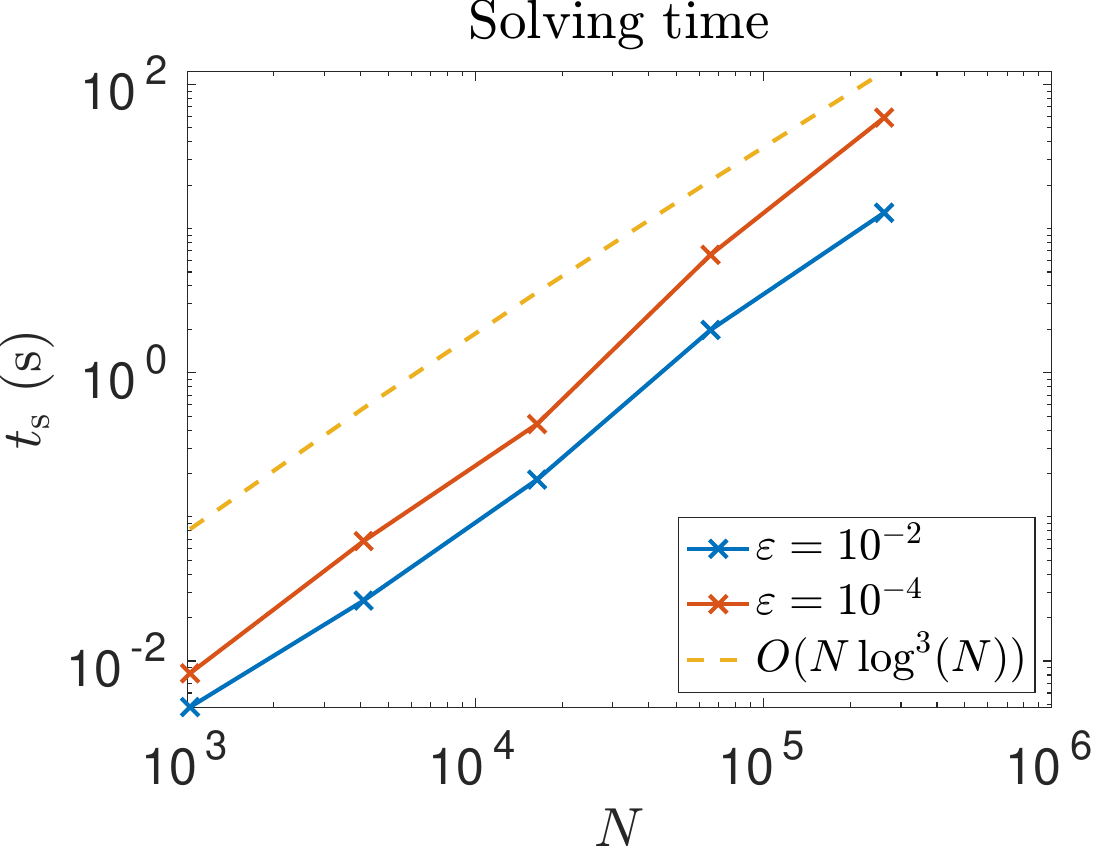}
    \end{subfigure}
    \caption{HSS rank, construction time, factorization time and solution time of the direct solver on random grids with \(\numrow = 1.5 \numcol\).
    Dashed curves show the theoretical reference scalings.}
    \label{fig:rand_rank_construct_factor_solve_alpha_1.5}
\end{figure}

Table~\ref{tab:typeII_2d_rand_direct_1.5} shows that the relative reconstruction errors remain comparable to the HSS tolerance over the tested problem sizes.
They range from \(6.4 \times 10^{-3}\) to \(1.4 \times 10^{-2}\) for \(\ranktol = 10^{-2}\), and from \(7.4 \times 10^{-5}\) to \(1.4 \times 10^{-4}\) for \(\ranktol = 10^{-4}\).
The tighter tolerance increases both preprocessing and solution costs.
At \(\numcol = 512^{2}\), the combined construction and factorization times are approximately \(45\) minutes and \(2.4\) hours for the two tolerances, while one solution takes \(13\) and \(59\) seconds, respectively.

\begin{table}[tbhp]
\centering
\begin{tabular}{cc c ccccc}
\toprule
\(N\) & \(M\) & \(\varepsilon\) & \(t_{\mathrm{c}}\) & \(t_{\mathrm{f}}\) & \(t_{\mathrm{s}}\) & \(r_{\mathrm{s}}\) & \(e_{\mathrm{s}}\) \\
\midrule
\multirow{2}{*}{\(32^2\)} & \multirow{2}{*}{1536} & \(10^{-2}\) & 4.5e-01 & 1.0e-01 & 4.8e-03 & 2.2e-03 & 6.4e-03 \\
 & & \(10^{-4}\) & 5.6e-01 & 1.4e-01 & 8.2e-03 & 2.1e-05 & 7.9e-05 \\
\midrule
\multirow{2}{*}{\(64^2\)} & \multirow{2}{*}{6144} & \(10^{-2}\) & 3.8e+00 & 9.8e-01 & 2.6e-02 & 3.5e-03 & 1.3e-02 \\
 & & \(10^{-4}\) & 8.0e+00 & 2.1e+00 & 6.8e-02 & 2.8e-05 & 7.4e-05 \\
\midrule
\multirow{2}{*}{\(128^2\)} & \multirow{2}{*}{24576} & \(10^{-2}\) & 2.9e+01 & 8.0e+00 & 1.8e-01 & 3.2e-03 & 9.7e-03 \\
 & & \(10^{-4}\) & 7.7e+01 & 2.9e+01 & 4.4e-01 & 2.5e-05 & 1.0e-04 \\
\midrule
\multirow{2}{*}{\(256^2\)} & \multirow{2}{*}{98304} & \(10^{-2}\) & 2.1e+02 & 7.2e+01 & 2.0e+00 & 4.8e-03 & 1.1e-02 \\
 & & \(10^{-4}\) & 7.0e+02 & 3.5e+02 & 6.6e+00 & 3.8e-05 & 9.2e-05 \\
\midrule
\multirow{2}{*}{\(512^2\)} & \multirow{2}{*}{393216} & \(10^{-2}\) & 2.1e+03 & 6.1e+02 & 1.3e+01 & 4.5e-03 & 1.4e-02 \\
 & & \(10^{-4}\) & 5.9e+03 & 2.9e+03 & 5.9e+01 & 4.2e-05 & 1.4e-04 \\
\bottomrule
\end{tabular}
\caption{Construction time, factorization time, solution time, relative residual and relative error of the direct solver on random grids with \(\numrow = 1.5 \numcol\).}
\label{tab:typeII_2d_rand_direct_1.5}
\end{table}

Figure~\ref{fig:rand_phantom_alpha_1.5} compares the exact phantom with the direct reconstructions at \(\numcol = 512^{2}\).
Both reconstructed images are visually close to the exact phantom at the displayed scale.
The row profiles show that the three small interior features are recovered at both tolerances.
For \(\ranktol = 10^{-2}\), small fluctuations remain in the constant regions of the profile.
These deviations are substantially reduced when \(\ranktol = 10^{-4}\), consistent with the smaller relative error reported in Table~\ref{tab:typeII_2d_rand_direct_1.5}.

\begin{figure}[tbhp]
    \centering
    \begin{subfigure}{0.3\textwidth}
        \centering
        \includegraphics[width=\linewidth]{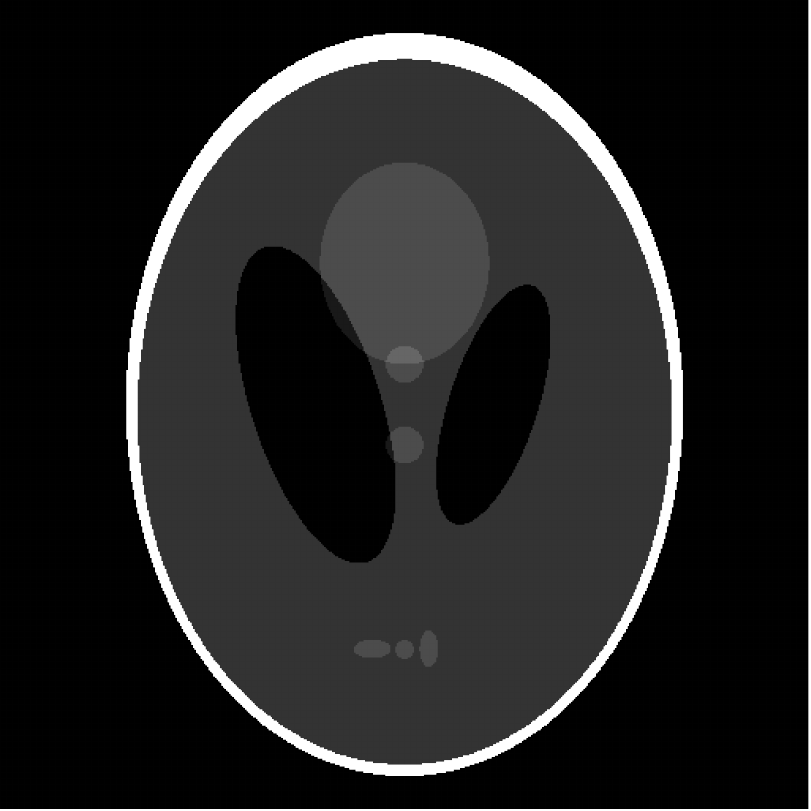}
    \end{subfigure}
    \hspace{0.01\textwidth}
    \begin{subfigure}{0.3\textwidth}
        \centering
        \includegraphics[width=\linewidth]{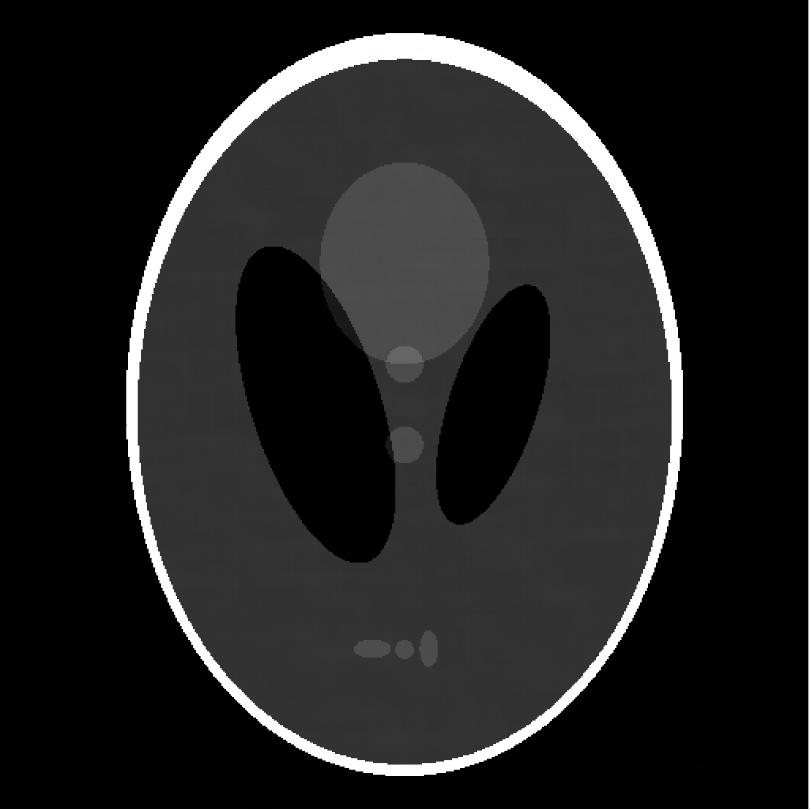}
    \end{subfigure}
    \hspace{0.01\textwidth}
    \begin{subfigure}{0.3\textwidth}
        \centering
        \includegraphics[width=\linewidth]{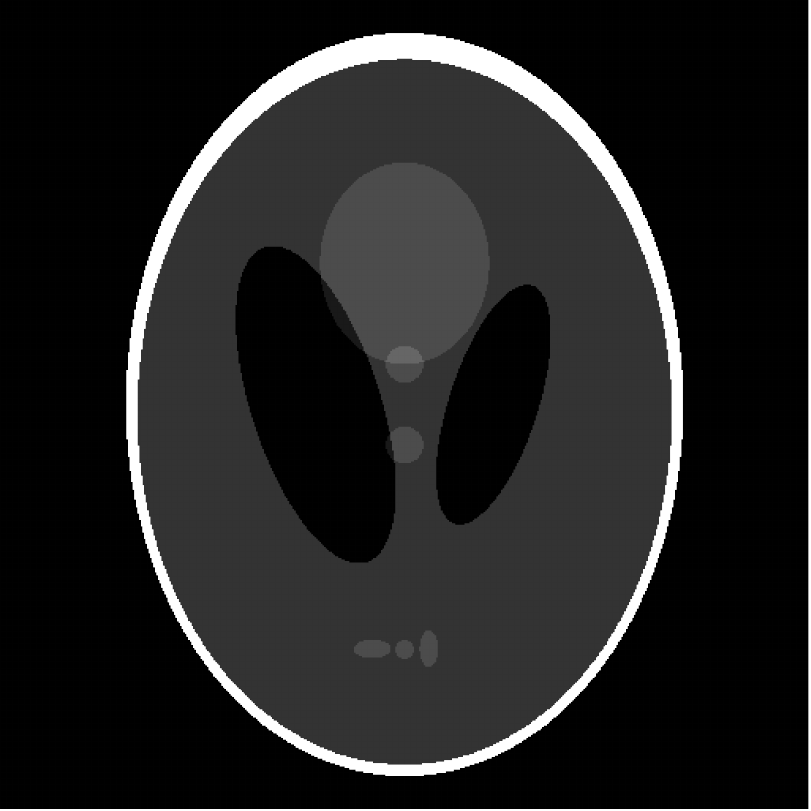}
    \end{subfigure}
    \par\medskip
    \begin{subfigure}{0.3\textwidth}
        \centering
        \includegraphics[width=\linewidth]{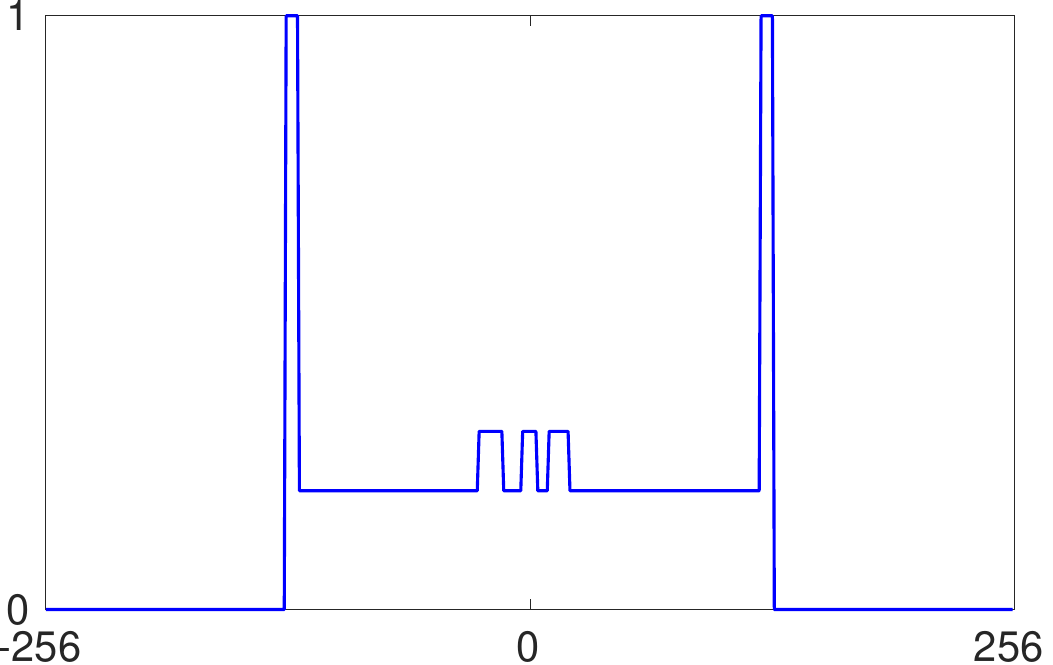}
    \end{subfigure}
    \hspace{0.01\textwidth}
    \begin{subfigure}{0.3\textwidth}
        \centering
        \includegraphics[width=\linewidth]{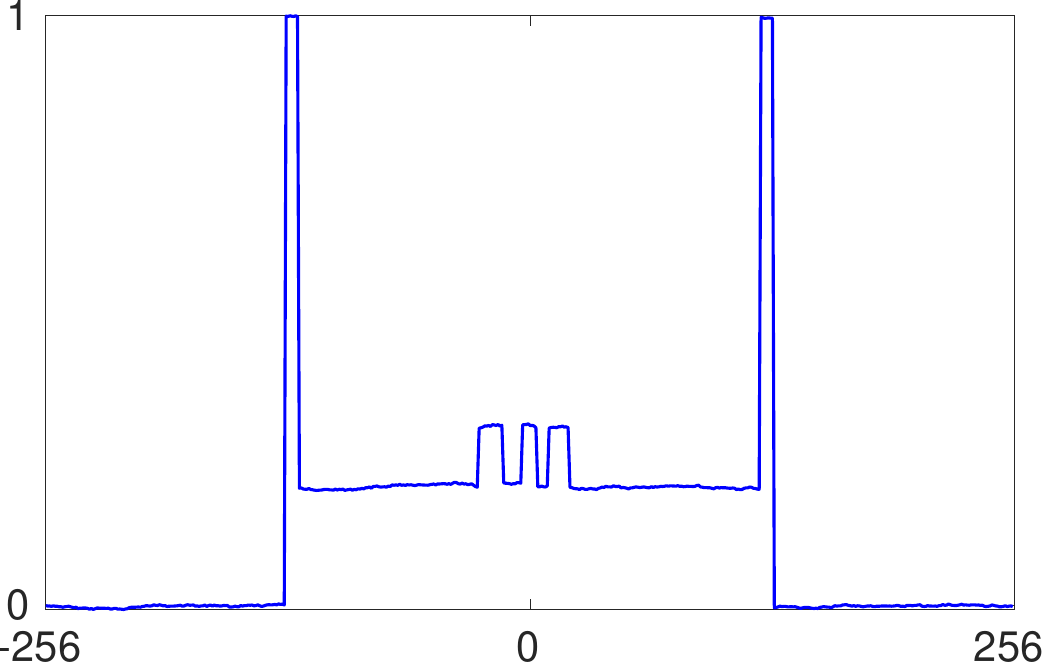}
    \end{subfigure}
    \hspace{0.01\textwidth}
    \begin{subfigure}{0.3\textwidth}
        \centering
        \includegraphics[width=\linewidth]{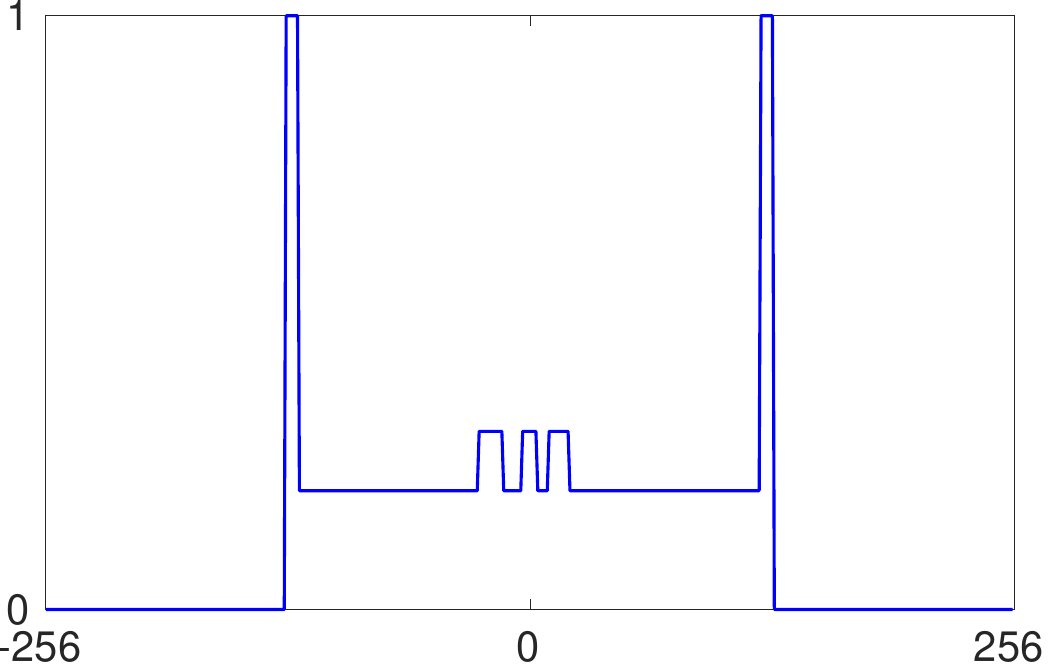}
    \end{subfigure}
    \caption{Phantom reconstruction on a random grid with \(\numcol = 512^{2}\) and \(\numrow = 1.5 \numcol\).
    Top: the real parts of the exact phantom and the direct reconstructions with \(\ranktol = 10^{-2}\) and \(10^{-4}\), from left to right.
    Bottom: the corresponding \(416\)th rows.
    The relative errors of the two reconstructions are \(1.4 \times 10^{-2}\) and \(1.4 \times 10^{-4}\), respectively.}
    \label{fig:rand_phantom_alpha_1.5}
\end{figure}

Table~\ref{tab:typeII_2d_rand_iterative_1.5} compares lsqr and plsqr.
Unpreconditioned lsqr reaches the limit of \(500\) iterations for every tested problem size.
The proposed preconditioner reduces the iteration count to between \(8\) and \(15\) for \(\ranktol = 10^{-2}\), and to \(4\) for \(\ranktol = 10^{-4}\).
All preconditioned runs achieve relative residuals below \(6 \times 10^{-13}\) and relative errors below \(2 \times 10^{-11}\).
At \(\numcol = 512^{2}\), the iterative solution times are approximately \(150\) and \(130\) seconds for the two tolerances.
Thus, the tighter tolerance reduces the iteration count, but the increased cost per iteration limits the reduction in iterative solution time.

\begin{table}[tbhp]
\centering
\begin{tabular}{cc c ccccc}
\toprule
\(N\) & \(M\) & method & \(t_{\pre}\) & \(t_{\iter}\) & \(n_{\iter}\) & \(r_{\solve}\) & \(e_{\solve}\) \\
\midrule
\multirow{3}{*}{\(32^2\)} & \multirow{3}{*}{1536} & lsqr & - & 1.4e+00 & 500 & 1.1e-08 & 1.4e-07 \\
 & & plsqr (\(\varepsilon = 10^{-2}\)) & 5.6e-01 & 1.0e-01 & 8 & 1.7e-13 & 1.6e-12 \\
 & & plsqr (\(\varepsilon = 10^{-4}\)) & 7.0e-01 & 7.5e-02 & 4 & 1.2e-15 & 1.0e-14 \\
\midrule
\multirow{3}{*}{\(64^2\)} & \multirow{3}{*}{6144} & lsqr & - & 2.2e+00 & 500 & 3.6e-09 & 7.2e-08 \\
 & & plsqr (\(\varepsilon = 10^{-2}\)) & 4.8e+00 & 6.6e-01 & 10 & 1.4e-13 & 1.4e-12 \\
 & & plsqr (\(\varepsilon = 10^{-4}\)) & 1.0e+01 & 5.3e-01 & 4 & 9.7e-15 & 9.4e-14 \\
\midrule
\multirow{3}{*}{\(128^2\)} & \multirow{3}{*}{24576} & lsqr & - & 5.2e+00 & 500 & 8.3e-07 & 2.8e-05 \\
 & & plsqr (\(\varepsilon = 10^{-2}\)) & 3.7e+01 & 4.2e+00 & 10 & 4.0e-13 & 7.7e-12 \\
 & & plsqr (\(\varepsilon = 10^{-4}\)) & 1.1e+02 & 4.3e+00 & 4 & 2.2e-14 & 3.5e-13 \\
\midrule
\multirow{3}{*}{\(256^2\)} & \multirow{3}{*}{98304} & lsqr & - & 2.1e+01 & 500 & 2.2e-06 & 1.2e-04 \\
 & & plsqr (\(\varepsilon = 10^{-2}\)) & 2.9e+02 & 2.5e+01 & 12 & 2.6e-13 & 6.2e-12 \\
 & & plsqr (\(\varepsilon = 10^{-4}\)) & 1.0e+03 & 2.4e+01 & 4 & 2.7e-14 & 3.5e-13 \\
\midrule
\multirow{3}{*}{\(512^2\)} & \multirow{3}{*}{393216} & lsqr & - & 8.8e+01 & 500 & 6.6e-06 & 4.3e-04 \\
 & & plsqr (\(\varepsilon = 10^{-2}\)) & 2.7e+03 & 1.5e+02 & 15 & 5.4e-13 & 1.5e-11 \\
 & & plsqr (\(\varepsilon = 10^{-4}\)) & 8.8e+03 & 1.3e+02 & 4 & 2.2e-13 & 4.5e-12 \\
\bottomrule
\end{tabular}
\caption{Preprocessing time, iterative solution time, iteration count, relative residual and relative error of lsqr and plsqr on random grids with \(\numrow = 1.5 \numcol\).}
\label{tab:typeII_2d_rand_iterative_1.5}
\end{table}

\subsection{Polar Grid}

We next consider the polar grids with \(\beta = 0.6\), for which
\(\numrow \approx 0.47 \numcol \mylog_{4}{\numcol}\).
Figure~\ref{fig:polar_rank_construct_factor_solve_beta_0.6} shows the HSS rank and the timings of the direct solver.
As in the random-grid case, the observed rank growth agrees well with
\(\bigO\bigl(\sqrt{\numcol} \mylog{\numcol}\bigr)\).
Although the oversampling ratio now increases with \(\numcol\), the leading complexity terms remain the same.
Indeed, the term \(\numrow \mylog[2]{\numcol}\) in the construction bound is
\(\bigO\bigl(\numcol \mylog[3]{\numcol}\bigr)\).
Thus, the construction and factorization bounds are dominated by
\(\bigO\bigl(\numcol^{3 / 2} \mylog[3]{\numcol}\bigr)\), while the solution bound remains
\(\bigO\bigl(\numcol \mylog[3]{\numcol}\bigr)\).

\begin{figure}[tbhp]
    \centering
    \begin{subfigure}{0.37\textwidth}
        \centering
        \includegraphics[width=\linewidth]{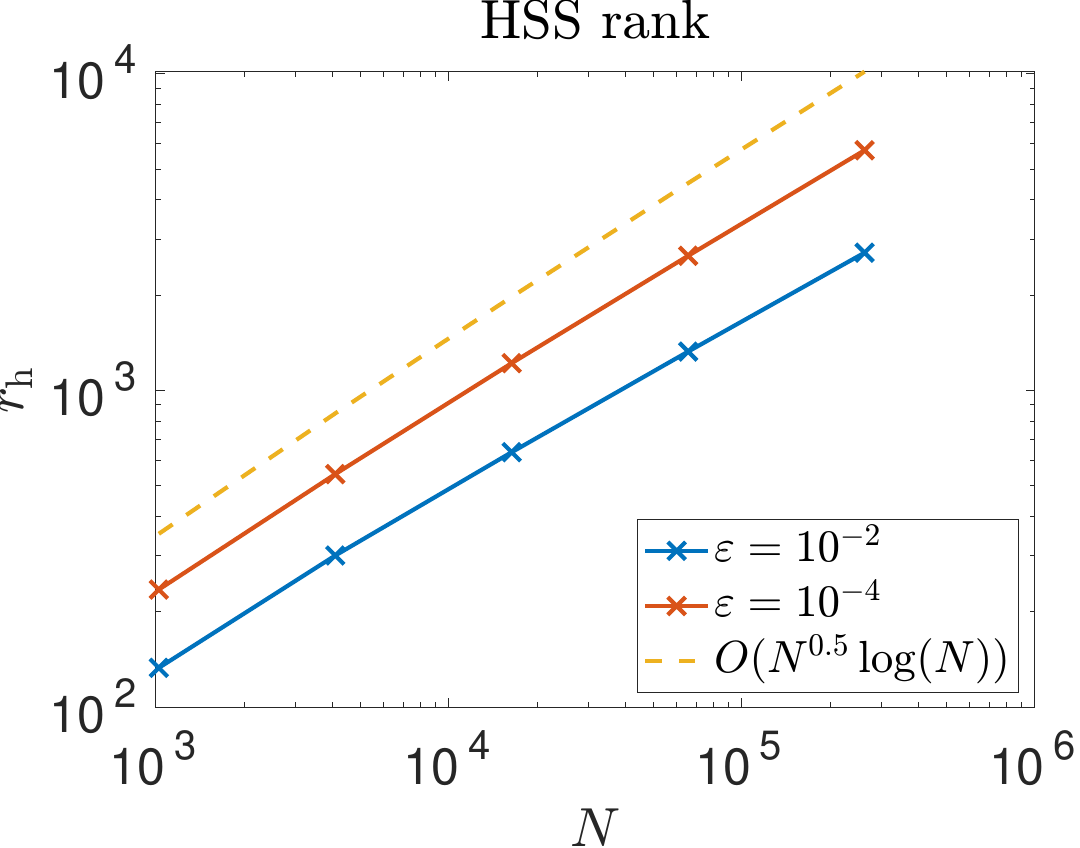}
    \end{subfigure}
    \hspace{0.01\textwidth}
    \begin{subfigure}{0.37\textwidth}
        \centering
        \includegraphics[width=\linewidth]{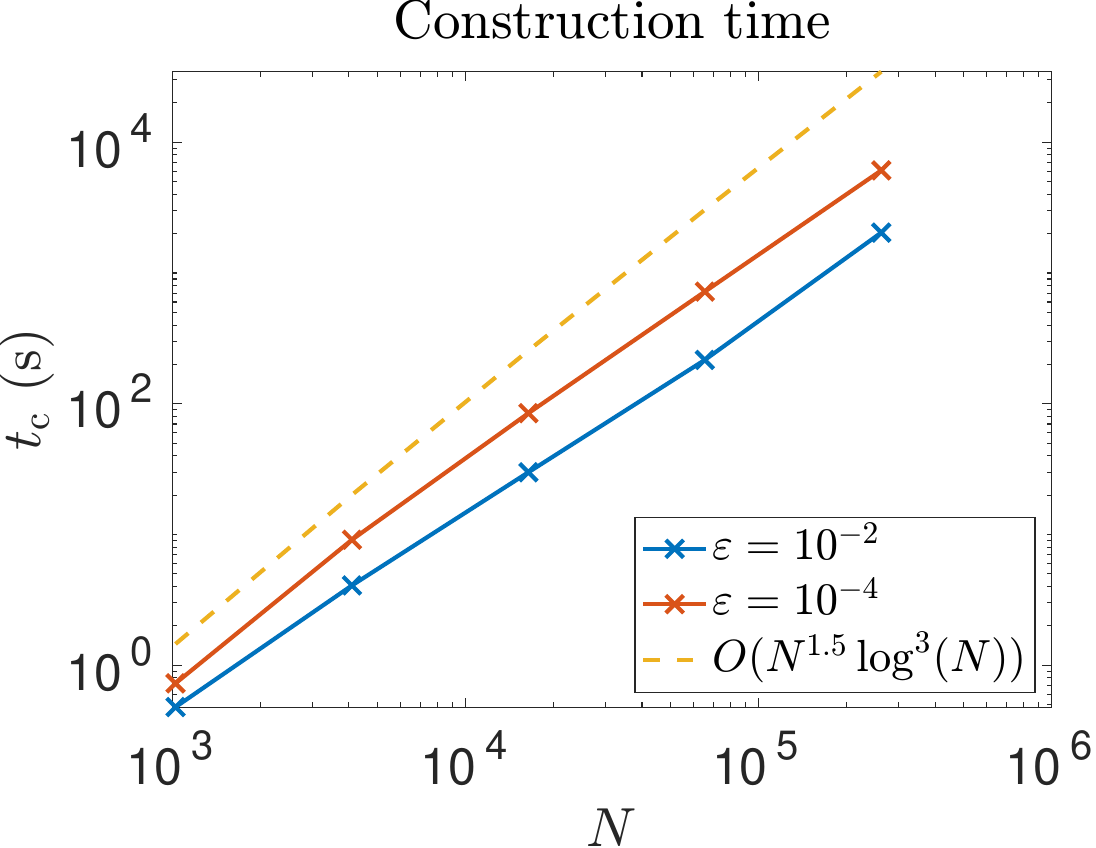}
    \end{subfigure}
    \begin{subfigure}{0.37\textwidth}
        \centering
        \includegraphics[width=\linewidth]{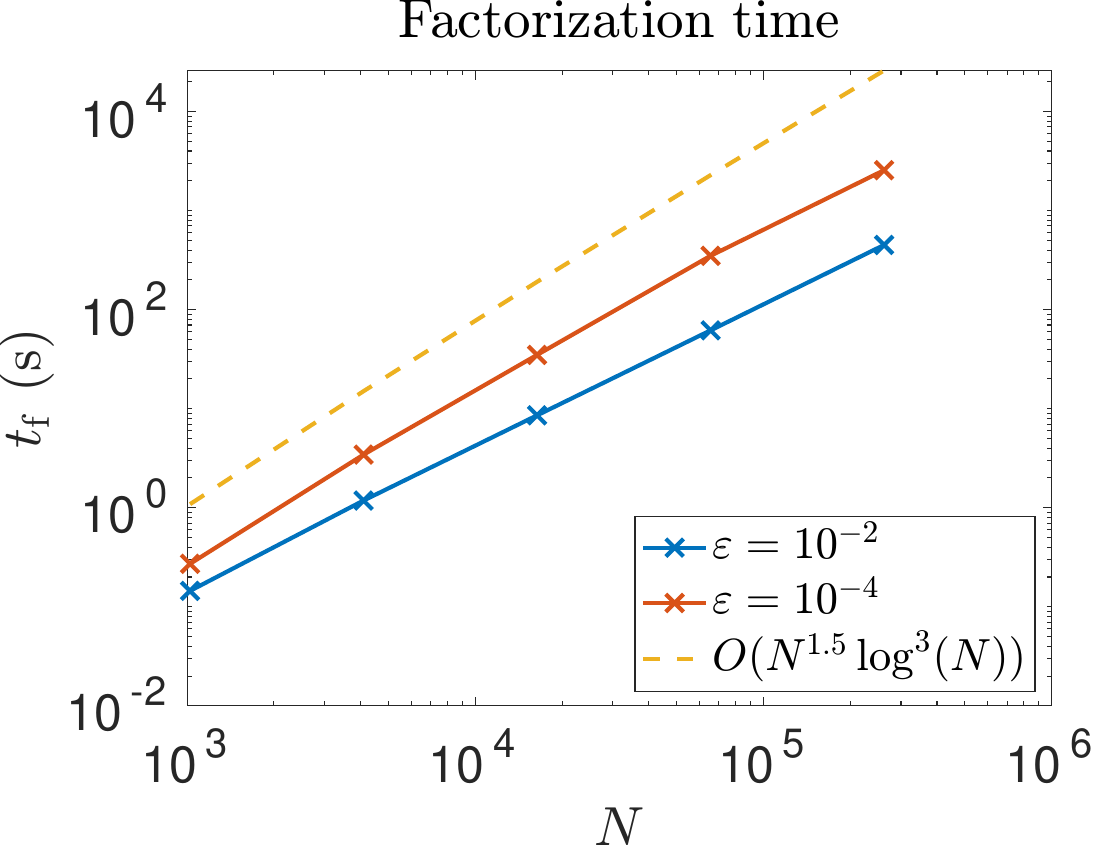}
    \end{subfigure}
    \hspace{0.01\textwidth}
    \begin{subfigure}{0.37\textwidth}
        \centering
        \includegraphics[width=\linewidth]{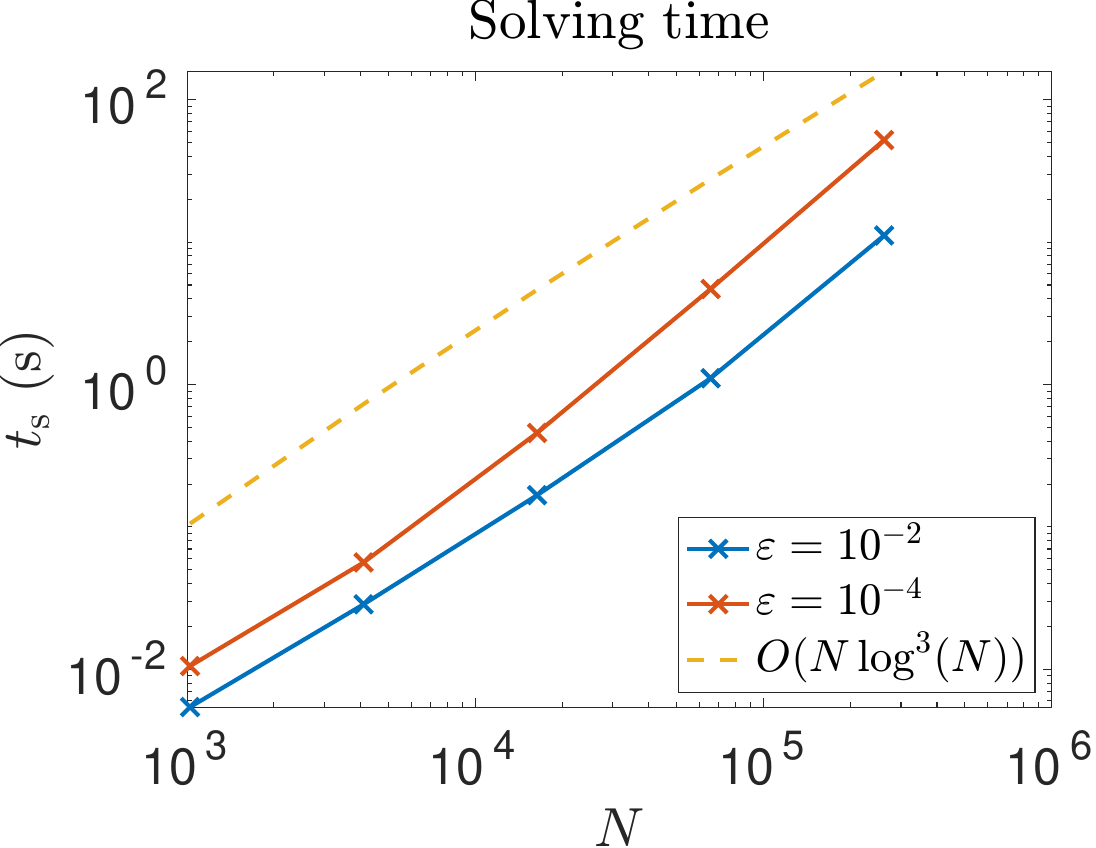}
    \end{subfigure}
    \caption{HSS rank, construction time, factorization time and solution time of the direct solver on polar grids with \(\beta = 0.6\) and \(\numrow \approx 0.47 \numcol \mylog_{4}{\numcol}\).
    Dashed curves show the theoretical reference scalings.}
    \label{fig:polar_rank_construct_factor_solve_beta_0.6}
\end{figure}

Table~\ref{tab:typeII_2d_polar_direct_0.6} reports the timings and reconstruction errors.
At \(\numcol = 512^{2}\), the combined construction and factorization times are approximately \(41\) minutes and \(2.4\) hours for \(\ranktol = 10^{-2}\) and \(10^{-4}\), while one solution takes \(11\) and \(52\) seconds, respectively.
The online solution is therefore substantially less expensive than the offline computation for both tolerances.

The direct reconstruction errors are larger than in the random-grid experiments.
For \(\ranktol = 10^{-2}\), the relative errors range from \(7.5 \times 10^{-2}\) to \(2.1 \times 10^{-1}\).
Tightening the tolerance to \(10^{-4}\) reduces this range to between \(2.3 \times 10^{-4}\) and \(2.0 \times 10^{-3}\).

\begin{table}[tbhp]
\centering
\begin{tabular}{cc c ccccc}
\toprule
\(N\) & \(M\) & \(\varepsilon\) & \(t_{\mathrm{c}}\) & \(t_{\mathrm{f}}\) & \(t_{\mathrm{s}}\) & \(r_{\mathrm{s}}\) & \(e_{\mathrm{s}}\) \\
\midrule
\multirow{2}{*}{\(32^2\)} & \multirow{2}{*}{2397} & \(10^{-2}\) & 4.8e-01 & 1.4e-01 & 5.4e-03 & 5.3e-03 & 8.5e-02 \\
 & & \(10^{-4}\) & 7.3e-01 & 2.7e-01 & 1.1e-02 & 4.2e-05 & 2.3e-04 \\
\midrule
\multirow{2}{*}{\(64^2\)} & \multirow{2}{*}{11620} & \(10^{-2}\) & 4.1e+00 & 1.2e+00 & 2.9e-02 & 6.2e-03 & 1.0e-01 \\
 & & \(10^{-4}\) & 9.1e+00 & 3.4e+00 & 5.6e-02 & 4.9e-05 & 7.4e-04 \\
\midrule
\multirow{2}{*}{\(128^2\)} & \multirow{2}{*}{54373} & \(10^{-2}\) & 3.0e+01 & 8.6e+00 & 1.7e-01 & 9.4e-03 & 2.1e-01 \\
 & & \(10^{-4}\) & 8.5e+01 & 3.5e+01 & 4.6e-01 & 7.0e-05 & 2.0e-03 \\
\midrule
\multirow{2}{*}{\(256^2\)} & \multirow{2}{*}{249074} & \(10^{-2}\) & 2.2e+02 & 6.2e+01 & 1.1e+00 & 1.7e-02 & 1.4e-01 \\
 & & \(10^{-4}\) & 7.2e+02 & 3.5e+02 & 4.7e+00 & 1.1e-04 & 2.0e-03 \\
\midrule
\multirow{2}{*}{\(512^2\)} & \multirow{2}{*}{1122019} & \(10^{-2}\) & 2.0e+03 & 4.5e+02 & 1.1e+01 & 2.3e-02 & 7.5e-02 \\
 & & \(10^{-4}\) & 6.1e+03 & 2.6e+03 & 5.2e+01 & 1.4e-04 & 2.7e-04 \\
\bottomrule
\end{tabular}
\caption{Construction time, factorization time, solution time, relative residual and relative error of the direct solver on polar grids with \(\numrow \approx 0.47 \numcol \mylog_{4}{\numcol}\).}
\label{tab:typeII_2d_polar_direct_0.6}
\end{table}

Figure~\ref{fig:polar_phantom_beta_0.6} shows the reconstructions and row profiles for \(\numcol = 512^{2}\).
As in the random-grid case, both reconstructed images are visually close to the exact phantom at the displayed scale, and the row profiles recover the three small interior features at both tolerances.
For \(\ranktol = 10^{-2}\), small deviations remain in the constant regions.
With \(\ranktol = 10^{-4}\), the reconstructed profile closely follows the exact one at the displayed scale.

\begin{figure}[tbhp]
    \centering
    \begin{subfigure}{0.3\textwidth}
        \centering
        \includegraphics[width=\linewidth]{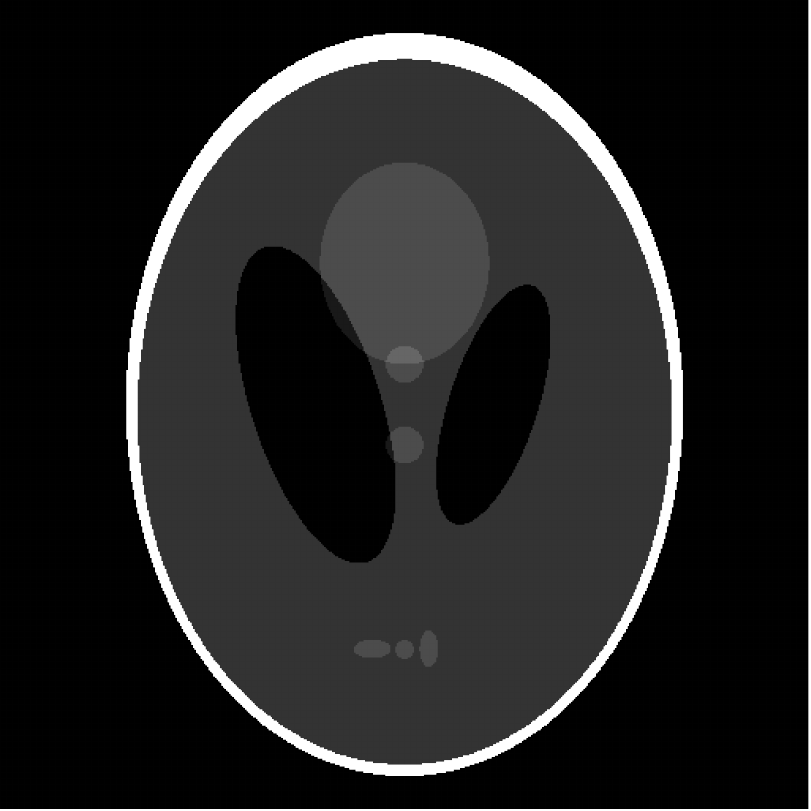}
    \end{subfigure}
    \hspace{0.01\textwidth}
    \begin{subfigure}{0.3\textwidth}
        \centering
        \includegraphics[width=\linewidth]{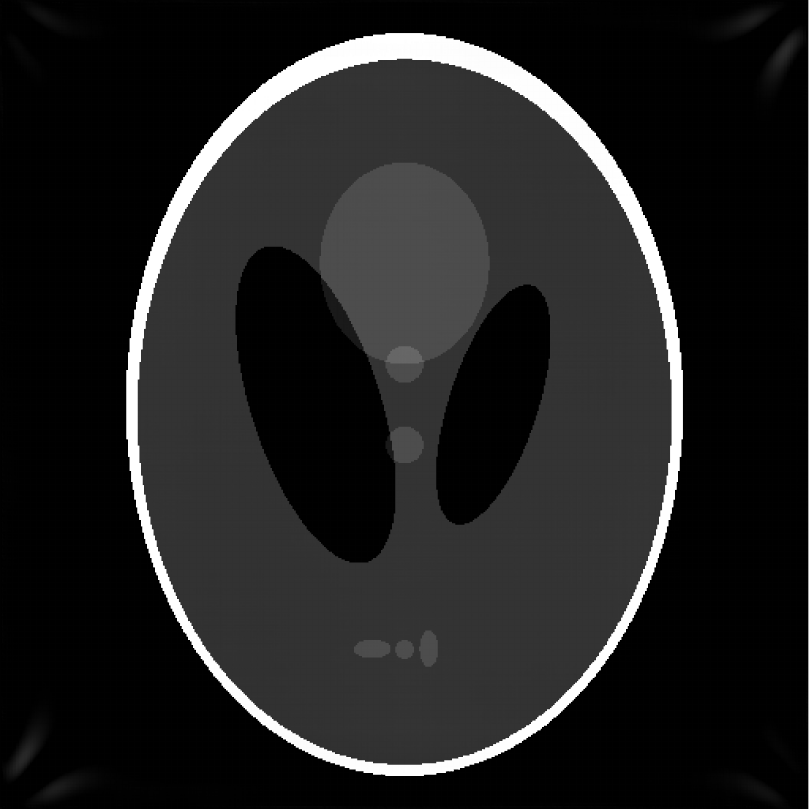}
    \end{subfigure}
    \hspace{0.01\textwidth}
    \begin{subfigure}{0.3\textwidth}
        \centering
        \includegraphics[width=\linewidth]{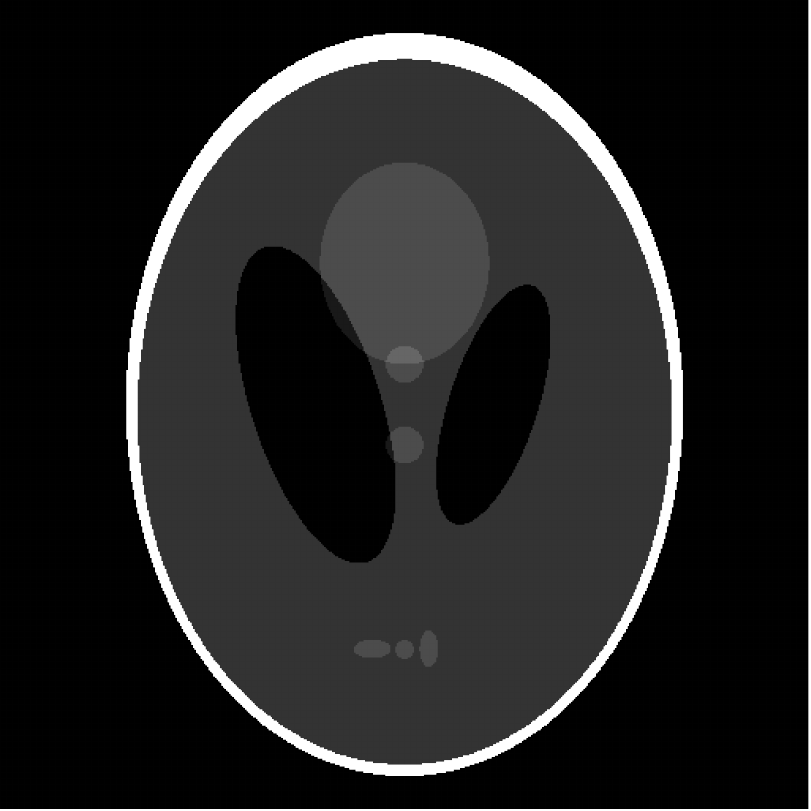}
    \end{subfigure}
    \par\medskip
    \begin{subfigure}{0.3\textwidth}
        \centering
        \includegraphics[width=\linewidth]{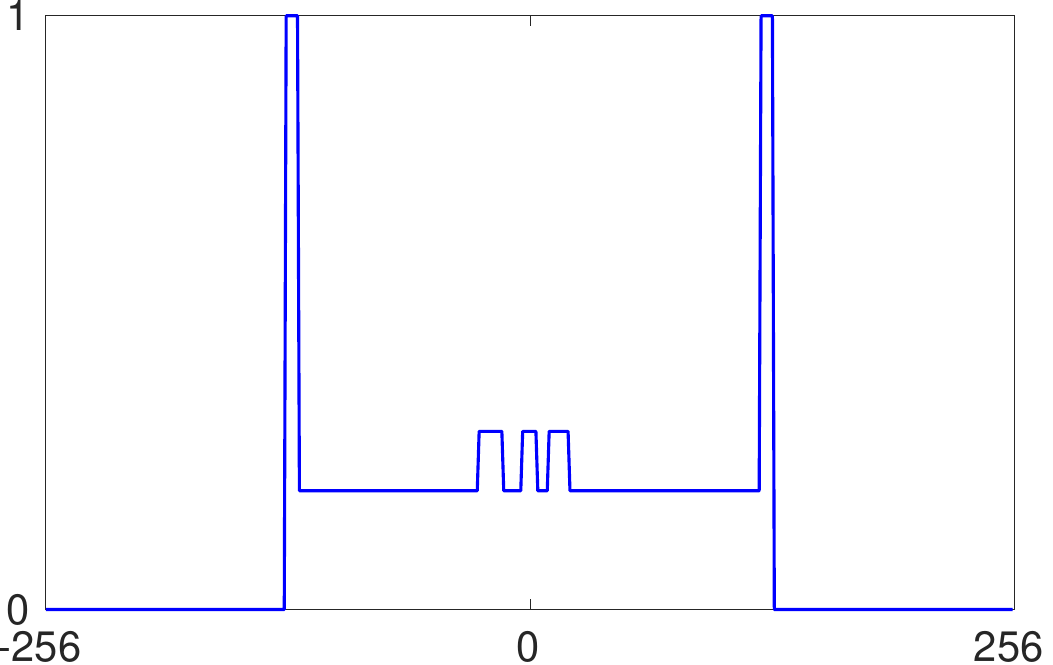}
    \end{subfigure}
    \hspace{0.01\textwidth}
    \begin{subfigure}{0.3\textwidth}
        \centering
        \includegraphics[width=\linewidth]{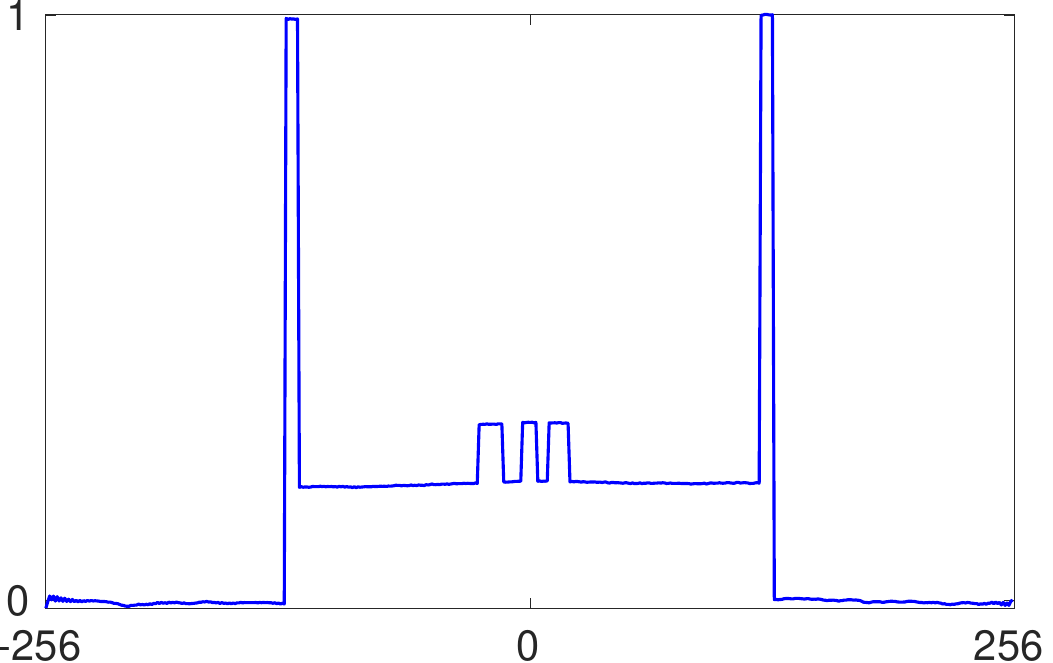}
    \end{subfigure}
    \hspace{0.01\textwidth}
    \begin{subfigure}{0.3\textwidth}
        \centering
        \includegraphics[width=\linewidth]{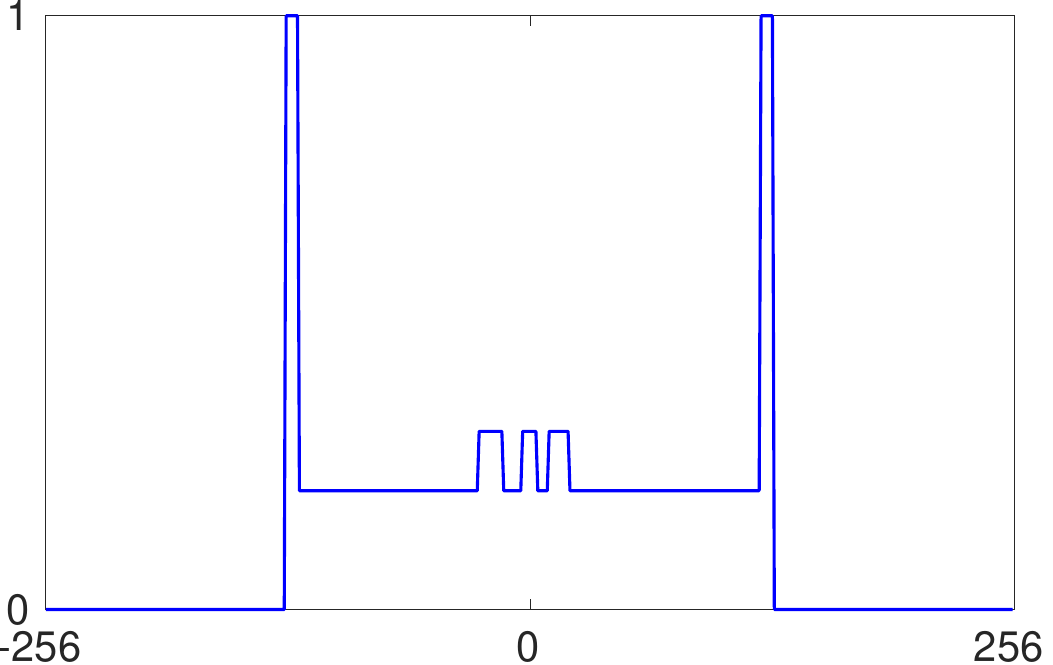}
    \end{subfigure}
    \caption{Phantom reconstruction on a polar grid with \(\numcol = 512^{2}\), \(\beta = 0.6\), and \(\numrow = 1122019\).
    Top: the real parts of the exact phantom and the direct reconstructions with \(\ranktol = 10^{-2}\) and \(10^{-4}\), from left to right.
    Bottom: the corresponding \(416\)th rows.
    The relative errors of the two reconstructions are \(7.5 \times 10^{-2}\) and \(2.7 \times 10^{-4}\), respectively.}
    \label{fig:polar_phantom_beta_0.6}
\end{figure}

The iterative results are reported in Table~\ref{tab:typeII_2d_polar_iterative_0.6}.
Unpreconditioned lsqr terminates after \(470\) iterations for the smallest problem and reaches the limit of \(500\) iterations for all larger problems.
For the largest problem, its relative residual and relative error are \(2.0 \times 10^{-5}\) and \(6.5 \times 10^{-4}\), respectively.
With the proposed preconditioner, the iteration counts are between \(16\) and \(32\) for \(\ranktol = 10^{-2}\), and between \(5\) and \(7\) for \(\ranktol = 10^{-4}\).
All preconditioned runs achieve relative residuals below \(10^{-12}\) and relative errors below \(10^{-10}\).

At \(\numcol = 512^{2}\) and \(\ranktol = 10^{-2}\), the direct solution has relative error \(7.5 \times 10^{-2}\), whereas plsqr reduces it to \(3.4 \times 10^{-12}\) in \(16\) iterations.
Tightening the HSS tolerance to \(10^{-4}\) reduces the iteration count to \(5\), but the iterative times remain similar, at approximately \(150\) and \(140\) seconds.
For this problem, the coarser HSS approximation therefore provides accurate iterative reconstruction with substantially less preprocessing and a similar iterative solution time.

\begin{table}[tbhp]
\centering
\begin{tabular}{cc c ccccc}
\toprule
\(N\) & \(M\) & method & \(t_{\pre}\) & \(t_{\iter}\) & \(n_{\iter}\) & \(r_{\solve}\) & \(e_{\solve}\) \\
\midrule
\multirow{3}{*}{\(32^2\)} & \multirow{3}{*}{2397} & lsqr & - & 1.5e+00 & 470 & 9.4e-13 & 3.9e-12 \\
 & & plsqr (\(\varepsilon = 10^{-2}\)) & 6.2e-01 & 2.0e-01 & 16 & 1.6e-13 & 4.3e-12 \\
 & & plsqr (\(\varepsilon = 10^{-4}\)) & 1.0e+00 & 1.3e-01 & 5 & 1.9e-13 & 1.3e-11 \\
\midrule
\multirow{3}{*}{\(64^2\)} & \multirow{3}{*}{11620} & lsqr & - & 3.0e+00 & 500 & 2.6e-05 & 5.7e-03 \\
 & & plsqr (\(\varepsilon = 10^{-2}\)) & 5.3e+00 & 1.5e+00 & 22 & 1.6e-13 & 7.0e-12 \\
 & & plsqr (\(\varepsilon = 10^{-4}\)) & 1.3e+01 & 8.2e-01 & 6 & 1.3e-13 & 5.2e-12 \\
\midrule
\multirow{3}{*}{\(128^2\)} & \multirow{3}{*}{54373} & lsqr & - & 1.0e+01 & 500 & 2.3e-05 & 1.6e-03 \\
 & & plsqr (\(\varepsilon = 10^{-2}\)) & 3.8e+01 & 1.2e+01 & 32 & 8.4e-13 & 9.5e-11 \\
 & & plsqr (\(\varepsilon = 10^{-4}\)) & 1.2e+02 & 7.0e+00 & 7 & 1.8e-13 & 5.3e-11 \\
\midrule
\multirow{3}{*}{\(256^2\)} & \multirow{3}{*}{249074} & lsqr & - & 4.2e+01 & 500 & 5.1e-05 & 2.6e-03 \\
 & & plsqr (\(\varepsilon = 10^{-2}\)) & 2.8e+02 & 5.4e+01 & 29 & 3.8e-13 & 1.7e-11 \\
 & & plsqr (\(\varepsilon = 10^{-4}\)) & 1.1e+03 & 3.9e+01 & 7 & 7.2e-14 & 2.2e-11 \\
\midrule
\multirow{3}{*}{\(512^2\)} & \multirow{3}{*}{1122019} & lsqr & - & 2.0e+02 & 500 & 2.0e-05 & 6.5e-04 \\
 & & plsqr (\(\varepsilon = 10^{-2}\)) & 2.5e+03 & 1.5e+02 & 16 & 2.4e-13 & 3.4e-12 \\
 & & plsqr (\(\varepsilon = 10^{-4}\)) & 8.7e+03 & 1.4e+02 & 5 & 3.0e-13 & 8.7e-12 \\
\bottomrule
\end{tabular}
\caption{Preprocessing time, iterative solution time, iteration count, relative residual and relative error of lsqr and plsqr on polar grids with \(\numrow \approx 0.47 \numcol \mylog_{4}{\numcol}\).}
\label{tab:typeII_2d_polar_iterative_0.6}
\end{table}

\section{Conclusions and Future Directions} \label{sec:conclusion_future_directions}

This paper develops an algebraic HSS construction based on the face-splitting product and applies it to direct inversion of the 2D type-II NUDFT.
We define the face-splitting product of two HSS trees and show that the product of two HSS matrices admits an HSS representation on the resulting tree.
The constructive proof provides explicit formulas for the HSS generators and bounds on the off-diagonal ranks.
For the transformed NUDFT matrix \(\tnudftmat = \nudftmat \dftmatinv\), this construction combines two 1D HSS approximations, and we establish a Frobenius-norm approximation error bound.

Recompression and URV factorization, followed by a two-dimensional iFFT in the solution stage, give a direct solver for the associated least-squares problem.
For fixed accuracy, square frequency grids, and balanced HSS trees with fixed leaf column sizes, the offline and online complexities are \(\offlinecomplexity\) and \(\onlinecomplexity\), respectively.
The offline factors can be reused for different right-hand sides with the same sampling geometry and coefficient grid.
Numerical experiments on random and polar grids examine the computational scaling and the accuracy of image reconstruction from nonuniform Fourier data.
The polar-grid experiments are motivated by Fourier reconstruction in CT.
The results also demonstrate that the HSS approximation substantially reduces lsqr iteration counts and improves the solution accuracy attained within the prescribed iteration limit.

Future work includes extending the solver to the 2D type-III NUDFT following~\cite{Li_Liu_2025}, and establishing rigorous numerical rank bounds for the HSS blocks of \(\tnudftmat\).
An analogue of Lemma~\ref{lem:face_splitting_exact_rank} for numerical rank could help address the latter.
Higher-dimensional extensions also merit investigation, since the rank model suggests that the cost of the present HSS construction increases rapidly with dimension.
Alternative representations, such as butterfly factorizations~\cite{Candes_Demanet_Ying_2009, Li_Yang_Martin_Ho_Ying_2015}, may provide a more economical representation of the transformed matrix.
Developing efficient inverse algorithms or preconditioners from such representations remains an open direction.

\section*{Acknowledgments}

YL was supported in part by the National Natural Science Foundation of China under Grant Nos. 12271109 and 12526211; by the Shanghai Pilot Program for Basic Research-Fudan University under Grant No. 21TQ1400100 (22TQ017); by the Scientific Research Innovation Capability Support Project for Young Faculty under Grant No. SRICSPYF-ZY2025159; and by the Xuemin Institute of Advanced Studies, Fudan University.

\bibliographystyle{siamplain}
\bibliography{ref}

@article{Cooley_Tukey_1965,
   author = {Cooley, James W. and Tukey, John W.},
   title = {An algorithm for the machine calculation of complex {Fourier} series},
   journal = {Mathematics of Computation},
   volume = {19},
   number = {90},
   pages = {297-301},
   ISSN = {00255718, 10886842},
   year = {1965},
   type = {Journal Article}
}

@article{Dutt_Rokhlin_1993,
   author = {Dutt, Alok and Rokhlin, Vladimir},
   title = {Fast {Fourier} transforms for nonequispaced data},
   journal = {SIAM Journal on Scientific Computing},
   volume = {14},
   number = {6},
   pages = {1368-1393},
   ISSN = {1064-8275},
   year = {1993},
   type = {Journal Article}
}

@article{Dutt_Rokhlin_1995,
   author = {Dutt, Alok and Rokhlin, Vladimir},
   title = {Fast {Fourier} transforms for nonequispaced data, {II}},
   journal = {Applied and Computational Harmonic Analysis},
   volume = {2},
   number = {1},
   pages = {85-100},
   ISSN = {1063-5203},
   year = {1995},
   type = {Journal Article}
}

@article{Anderson_Dahleh_1996,
   author = {Anderson, Chris and Dahleh, Marie Dillon},
   title = {Rapid computation of the discrete {Fourier} transform},
   journal = {SIAM Journal on Scientific Computing},
   volume = {17},
   number = {4},
   pages = {913-919},
   year = {1996},
   type = {Journal Article}
}

@inbook{Bourgeois_Wajer_vanOrmondt_GraveronDemilly_2001,
   author = {Bourgeois, Marc and Wajer, Frank T. A. W. and van Ormondt, Dirk and Graveron-Demilly, Danielle},
   title = {Reconstruction of {MRI} images from non-uniform sampling and its application to intrascan motion correction in functional {MRI}},
   booktitle = {Modern Sampling Theory: Mathematics and Applications},
   publisher = {Birkhäuser Boston},
   address = {Boston, MA},
   pages = {343-363},
   ISBN = {978-1-4612-0143-4},
   year = {2001},
   type = {Book Section}
}

@inbook{Potts_Steidl_Tasche_2001,
   author = {Potts, Daniel and Steidl, Gabriele and Tasche, Manfred},
   title = {Fast {Fourier} transforms for nonequispaced data: A tutorial},
   booktitle = {Modern Sampling Theory: Mathematics and Applications},
   publisher = {Birkhäuser Boston},
   address = {Boston, MA},
   pages = {247-270},
   ISBN = {978-1-4612-0143-4},
   year = {2001},
   type = {Book Section}
}

@article{Greengard_Lee_2004,
   author = {Greengard, Leslie and Lee, June-Yub},
   title = {Accelerating the nonuniform fast {Fourier} transform},
   journal = {SIAM Review},
   volume = {46},
   number = {3},
   pages = {443-454},
   ISSN = {0036-1445},
   year = {2004},
   type = {Journal Article}
}

@article{Lee_Greengard_2005,
   author = {Lee, June-Yub and Greengard, Leslie},
   title = {The type 3 nonuniform {FFT} and its applications},
   journal = {Journal of Computational Physics},
   volume = {206},
   number = {1},
   pages = {1-5},
   ISSN = {0021-9991},
   year = {2005},
   type = {Journal Article}
}

@article{Fenn_Kunis_Potts_2007,
   author = {Fenn, Markus and Kunis, Stefan and Potts, Daniel},
   title = {On the computation of the polar {FFT}},
   journal = {Appl. Comput. Harmon. Anal.},
   volume = {22},
   number = {2},
   pages = {257-263},
   ISSN = {1063-5203},
   year = {2007},
   type = {Journal Article}
}

@book{Bagchi_Mitra_2012,
  author    = {Bagchi, Sonali and Mitra, Sanjit K.},
  title     = {The nonuniform discrete {Fourier} transform and its applications in signal processing},
  publisher = {Springer},
  address   = {Boston, MA},
  year      = {2012},
  isbn      = {978-1-4615-4925-3}
}

@article{Ruiz-Antolín_Townsend_2018,
   author = {Ruiz-Antolín, Diego and Townsend, Alex},
   title = {A nonuniform fast {Fourier} transform based on low rank approximation},
   journal = {SIAM Journal on Scientific Computing},
   volume = {40},
   number = {1},
   pages = {A529-A547},
   ISSN = {1064-8275},
   year = {2018},
   type = {Journal Article}
}

@article{Kircheis_Potts_2019,
   author = {Kircheis, Melanie and Potts, Daniel},
   title = {Direct inversion of the nonequispaced fast {Fourier} transform},
   journal = {Linear Algebra and its Applications},
   volume = {575},
   pages = {106-140},
   ISSN = {0024-3795},
   year = {2019},
   type = {Journal Article}
}

@article{Barnett_Magland_Klinteberg_2019,
   author = {Barnett, Alexander H. and Magland, Jeremy and af Klinteberg, Ludvig},
   title = {A parallel nonuniform fast {Fourier} transform library based on an “exponential of semicircle" kernel},
   journal = {SIAM Journal on Scientific Computing},
   volume = {41},
   number = {5},
   pages = {C479-C504},
   ISSN = {1064-8275},
   year = {2019},
   type = {Journal Article}
}

@book{Hansen_Jorgensen_Lionheart_2021,
  author    = {Hansen, Per Christian and J{\o}rgensen, Jakob and Lionheart, William R. B.},
  title     = {Computed tomography: Algorithms, insight, and just enough theory},
  publisher = {Society for Industrial and Applied Mathematics},
  address   = {Philadelphia, PA},
  year      = {2021},
  doi       = {10.1137/1.9781611976670}
}

@article{Kircheis_Potts_2023,
   author = {Kircheis, Melanie and Potts, Daniel},
   title = {Fast and direct inversion methods for the multivariate nonequispaced fast {Fourier} transform},
   journal = {Front. Appl. Math. Stat.},
   volume = {Volume 9},
   ISSN = {2297-4687},
   year = {2023},
   type = {Journal Article}
}

@article{Wilber_Epperly_Barnett_2025,
   author = {Wilber, Heather and Epperly, Ethan N. and Barnett, Alex H.},
   title = {Superfast direct inversion of the nonuniform discrete {Fourier} transform via hierarchically semiseparable least squares},
   journal = {SIAM Journal on Scientific Computing},
   pages = {A1702-A1732},
   ISSN = {1064-8275},
   year = {2025},
   type = {Journal Article}
}

@article{Jiang_Greengard_2025,
   author = {Jiang, Shidong and Greengard, Leslie},
   title = {A dual-space multilevel kernel-splitting framework for discrete and continuous convolution},
   journal = {Commun. Pure Appl. Math.},
   volume = {78},
   number = {5},
   pages = {1086-1143},
   ISSN = {0010-3640},
   year = {2025},
   type = {Journal Article}
}

@article{Li_Liu_2025,
  author        = {Li, Yingzhou and Liu, Jingyu},
  title         = {A superfast direct solver for {type-III} inverse nonuniform discrete {Fourier} transform},
  journal       = {arXiv preprint},
  year          = {2025},
  eprint        = {2512.03733},
  archivePrefix = {arXiv},
  primaryClass  = {math.NA}
}

@article{Hackbusch_1999,
   author = {Hackbusch, Wolfgang},
   title = {A sparse matrix arithmetic based on {$\mathcal{H}$}-matrices. {Part I}: Introduction to {$\mathcal{H}$}-matrices},
   journal = {Computing},
   volume = {62},
   number = {2},
   pages = {89-108},
   ISSN = {0010-485X},
   year = {1999},
   type = {Journal Article}
}

@article{Borm_Grasedyck_Hackbusch_2003,
   author = {B{\"o}rm, Steffen and Grasedyck, Lars and Hackbusch, Wolfgang},
   title = {Introduction to hierarchical matrices with applications},
   journal = {Engineering Analysis with Boundary Elements},
   volume = {27},
   number = {5},
   pages = {405-422},
   ISSN = {0955-7997},
   year = {2003},
   type = {Journal Article}
}

@article{Martinsson_Rokhlin_2005,
   author = {Martinsson, Per-Gunnar and Rokhlin, Vladimir},
   title = {A fast direct solver for boundary integral equations in two dimensions},
   journal = {Journal of Computational Physics},
   volume = {205},
   number = {1},
   pages = {1-23},
   ISSN = {0021-9991},
   year = {2005},
   type = {Journal Article}
}

@article{Candes_Demanet_Ying_2009,
   author = {Cand{\`{e}}s, Emmanuel and Demanet, Laurent and Ying, Lexing},
   title = {A fast butterfly algorithm for the computation of {Fourier} integral operators},
   journal = {Multiscale Model. Simul.},
   volume = {7},
   number = {4},
   pages = {1727-1750},
   ISSN = {1540-3459},
   year = {2009},
   type = {Journal Article}
}

@article{Xia_Chandrasekaran_Gu_Li_2010,
   author = {Xia, Jianlin and Chandrasekaran, Shivkumar and Gu, Ming and Li, Xianye S.},
   title = {Fast algorithms for hierarchically semiseparable matrices},
   journal = {Numerical Linear Algebra with Applications},
   volume = {17},
   number = {6},
   pages = {953-976},
   ISSN = {1070-5325},
   year = {2010},
   type = {Journal Article}
}

@article{Xia_2012,
   author = {Xia, Jianlin},
   title = {On the complexity of some hierarchical structured matrix algorithms},
   journal = {SIAM J. Matrix Anal. Appl.},
   volume = {33},
   number = {2},
   pages = {388-410},
   year = {2012},
   type = {Journal Article}
}

@article{Xi_Xia_Cauley_Balakrishnan_2014,
   author = {Xi, Yuanzhe and Xia, Jianlin and Cauley, Stephen and Balakrishnan, Venkataramanan},
   title = {Superfast and stable structured solvers for {Toeplitz} least squares via randomized sampling},
   journal = {SIAM Journal on Matrix Analysis and Applications},
   volume = {35},
   number = {1},
   pages = {44-72},
   ISSN = {0895-4798},
   year = {2014},
   type = {Journal Article}
}

@article{Corona_Martinsson_Zorin_2015,
   author = {Corona, Eduardo and Martinsson, Per-Gunnar and Zorin, Denis},
   title = {An {$O(N)$} direct solver for integral equations on the plane},
   journal = {Applied and Computational Harmonic Analysis},
   volume = {38},
   number = {2},
   pages = {284-317},
   ISSN = {1063-5203},
   year = {2015},
   type = {Journal Article}
}

@article{Li_Yang_Martin_Ho_Ying_2015,
   author = {Li, Yingzhou and Yang, Haizhao and Martin, Eileen R. and Ho, Kenneth L. and Ying, Lexing},
   title = {Butterfly factorization},
   journal = {Multiscale Model. Simul.},
   volume = {13},
   number = {2},
   pages = {714-732},
   ISSN = {1540-3459},
   year = {2015},
   type = {Journal Article}
}

@article{Beckermann_Townsend_2017,
   author = {Beckermann, Bernhard and Townsend, Alex},
   title = {On the singular values of matrices with displacement structure},
   journal = {SIAM Journal on Matrix Analysis and Applications},
   volume = {38},
   number = {4},
   pages = {1227-1248},
   ISSN = {0895-4798},
   year = {2017},
   type = {Journal Article}
}

@article{Paige_Saunders_1982,
   author = {Paige, Christopher C. and Saunders, Michael A.},
   title = {{LSQR}: An algorithm for sparse linear equations and sparse least squares},
   journal = {ACM Trans. Math. Softw.},
   volume = {8},
   number = {1},
   pages = {43-71},
   ISSN = {0098-3500},
   year = {1982},
   type = {Journal Article}
}

@article{Cheng_Gimbutas_Martinsson_Rokhlin_2005,
   author = {Cheng, Hongwei and Gimbutas, Zydrunas and Martinsson, Per-Gunnar and Rokhlin, Vladimir},
   title = {On the compression of low rank matrices},
   journal = {SIAM Journal on Scientific Computing},
   volume = {26},
   number = {4},
   pages = {1389-1404},
   ISSN = {1064-8275},
   year = {2005},
   type = {Journal Article}
}

@article{Benner_Li_Truhar_2009,
   author = {Benner, Peter and Li, Ren-Cang and Truhar, Ninoslav},
   title = {On the {ADI} method for {Sylvester} equations},
   journal = {J. Comput. Appl. Math.},
   volume = {233},
   number = {4},
   pages = {1035-1045},
   ISSN = {0377-0427},
   year = {2009},
   type = {Journal Article}
}

\end{document}